\newcommand{\R}{\mathbb R}
\newcommand{\Int}{\text{Int}\,}
\newcommand{\T}{\mathbb T}
\newcommand{\inte}{\mathbb N}
\newcommand{\N}{\mathbb N}
\newcommand{\Px}{\mathcal{P}}
\newcommand{\Z}{\mathbb Z}
\newtheorem{rmk}{Remark}
\newtheorem{thm}{Theorem}[section]
\newtheorem{prop}[thm]{Proposition}
\newtheorem{lem}[thm]{Lemma}
\newtheorem{cor}[thm]{Corollary}
\newcommand{\J}{J^u}
\begin{document}

\title[Thermodynamics of the Katok Map]{Thermodynamics of the Katok Map}

\author{Y. Pesin}
\address{Department of Mathematics, Pennsylvania State
University, University Park, PA 16802, USA}
\email{pesin@math.psu.edu}
\author{S. Senti}
\address{Instituto de Matematica, Universidade Federal do Rio de Janeiro, C.P. 68 530, CEP 21945-970, R.J., Brazil}
\email{senti@im.ufrj.br}
\author{K. Zhang}
\address{Department of Mathematics,
University of Toronto, Toronto, Ontario, Canada}
\email{kezhang@math.umd.edu}

\date{\today}

\thanks{} 

\subjclass{37D25, 37D35, 37A25, 37E30}

\begin{abstract} 
We effect the thermodynamical formalism for the non-uniformly hyperbolic $C^\infty$ map of the two dimensional torus known as the Katok map (\cite{Kat79}). It is a slowdown of a linear Anosov map near the origin and it is a local (but not small) perturbation. We prove the existence of equilibrium measures for any continuous potential function and obtain uniqueness of equilibrium measures associated to the geometric $t$-potential 
$\varphi_t=-t\log |df|_{E^u(x)}|$ for any $t\in(t_0,\infty)$, $t\neq 1$ where $E^u(x)$ denotes the unstable direction. We show that $t_0$ shrinks to $-\infty$ as the size of the perturbation tends to zero. Finally, we establish exponential decay of correlations as well as the Central Limit Theorem for the equilibrium measures associated to $\varphi_t$ for all values of $t\in (t_0, 1)$. 
\end{abstract}

\maketitle

\section{Introduction}
In 1979 A. Katok introduced in \cite{Kat79} the first example of an area preserving $C^\infty$ diffeomorphism of the two dimensional torus $\mathbb{T}^2$, which is non-uniformly hyperbolic. Katok's construction starts with a linear hyperbolic automorphism $A$ of the torus and proceeds by slowing down trajectories in a small neighborhood of a hyperbolic fixed point. As a result this point becomes neutral thus producing trajectories with zero Lyapunov exponents. One can, however, show that the Lyapunov exponents at almost every point are nonzero (with one being positive and another one negative; see Section 2 for the construction and some basic properties of the Katok map). 

The goal of this paper is to effect the thermodynamical formalism for the Katok map. More precisely, we show (see Theorem 3.2) that there is a number $t_0<0$ such that for every 
$t\in (t_0,1)$ there exists a unique  equilibrium measure $\mu_t$ for the \emph{geometric 
$t$-potential} $\varphi_t(x)=-t\log |df|E^u(x)|$, where $E^u(x)$ is the (one-dimensional) unstable subspace at $x$. Moreover, we prove that $\mu_t$ has exponential decay of correlations and satisfies the Central Limit Theorem (CLT). Furthermore, we show that the number $|t_0|$ can be made arbitrarily large if the size of the slow-down neighborhood is sufficiently small. We emphasize that currently this is one of the only very few examples of non-uniformly hyperbolic diffeomorphisms for which one can obtain a sufficiently complete description of thermodynamics (see \cite{SenTak13, SenTak16, LepRio06, AarDenUrb93}; for a more complete picture of thermodynamics for non-uniformly hyperbolic systems see the survey \cite{ClPes16}).  

A crucial property of the Katok map is that it is topologically conjugate via a homeomorphism to the hyperbolic linear automorphism. In particular, since the map $A$ is expansive, so is the Katok map and hence, it admits an equilibrium measure associated to any continuous potential $\varphi$. Furthermore, the Katok map admits a finite Markov partition and as a result for every H\"older continuous function there exists a unique equilibrium measure. In particular, the Katok map possesses a unique measure of maximal entropy. 

We stress that despite presence of zero Lyapunov exponents, the collection of stable and unstable subspaces, $E^s(x)$ and $E^u(x)$, for the Katok map (whose a priori existence almost everywhere is guaranteed by the nonuniform hyperbolicity) can be extended to  continuous (one-dimensional) distributions on the whole torus. This implies that the function $\varphi_t(x)$ is continuous in~$x$. However, neither the conjugacy homeomorphism nor the geometric $t$-potential $\varphi_t(x)$ are H\"older continuous. These are the main obstacles for building thermodynamics for the Katok map as it is known that even a uniformly expanding map may exhibit phase transitions if the potential looses H\"older continuity at a single point (see \cite{PesZha06}). In this regard we note that there are two equilibrium measures corresponding to the potential $\varphi_1$, the area and the Dirac measure at the origin.\footnote{Due to the entropy formula, the area is the unique equilibrium measure for the potential $\varphi_1$ among all measures of positive entropy.}

Our approach to effect thermodynamics for the Katok map and the geometric $t$-potential is based on showing that this map is a Young's diffeomorphism. This is the class of maps introduced by Young in \cite{You98}, see Section 4 for details. These diffeomorphisms admit a symbolic representation by a tower. The base of the tower can be partitioned into countably many subsets and with respect to this partition the induced map on the base is conjugate to the full Bernoulli shift on a countable set of states. The height of the tower is a (not necessarily first) return time to the base (it is also called the inducing time). In our earlier paper \cite{PesSenZha16} we established, among other results, existence and uniqueness of equilibrium measures for the geometric $t$-potential $\varphi_t(x)$ and proved exponential decay of correlations and the CLT for these measures.

In the case of the Katok map the inducing time is the first return time to the base, so that the induced map is the first return map to the base, see Section 6. 

To show that the Katok map is a Young's diffeomorphism we need a substantially deeper knowledge of the behavior of trajectories of the map than is provided in the original Katok paper \cite{Kat79}. This includes, among other results: 1) sharp estimates of the time a given trajectory spends in the slow-down domain, see Lemma \ref{s2estimate}; 2) sharp estimates on the contraction rates along the stable local curves and the angle between stable and unstable curves when they pass through the slow-down domain, see Lemmas \ref{bad-delta} and \ref{bad-gamma1}; 3) construction of stable and unstable invariant cones with sufficiently small angle, see Lemma \ref{cone_invar1};\footnote{In the original Katok paper it is shown that the cones of angle $\frac{\pi}{4}$ centered around the eigendirections of the matrix $A$ are invariant under the map; this is a substantially simpler statement than Lemma \ref{cone_invar1}.} 4) uniform bounds on the contraction and expansion rates and uniform bounded distortion estimates for the induced map, see Section 6. 

In addition, we establish a crucial property of the tower for the Katok map: the number of partition elements of the base with the same inducing time admits an exponential bound with the exponent strictly less than the metric entropy of the (two-dimensional) Lebesgue measure, see Lemma \ref{h1}. It is this estimate that is instrumental in proving the uniqueness of equilibrium measures as well as showing that these measures have exponential decay of correlations and satisfy the CLT, see Proposition \ref{geom_poten}. 
 
We stress again that the loss of uniform hyperbolicity in the Katok map occurred due to the presence of a neutral fixed point. In the one-dimensional setting an example similar in spirit to the Katok map is the well-known Manneville-Pomeau map (see \cite{ManPom80}). This map admits an invariant measure which is absolutely continuous with respect to the (one-dimensional) Lebesgue measure. This invariant measure may or may not be finite depending on the higher order derivatives of the map in a small neighborhood of the neutral fixed point. In the case where the invariant measure is finite, the thermodynamics of this map is similar, see Remark 2 for more details. 

There are other examples of multi-dimensional non-uniformly hyperbolic systems for which some results on thermodynamics of these systems are known. In particular, Senti and Takahashi \cite{SenTak13, SenTak16} proved a theorem similar to our main Theorem \ref{Katok1} for the H\'enon map at the first bifurcation. Also, Leplaideur and Rios \cite{LepRio06} considered a $C^2$ map of the unit square in $\mathbb{R}^2$ with a fixed hyperbolic point whose stable and unstable separatrices have an orbit of homoclinic tangency. Under certain conditions this map possesses a horseshoe and as shown in \cite{LepRio06} every H\"older continuous potential admits a unique equilibrium measure that gives positive weight to any open set intersecting the horseshoe (see also \cite{Bar13} for related results).
For the thermodynamical formalism of parabolic rational maps of the Riemann sphere see \cite{AarDenUrb93} and references therein.

The structure of the paper is as follows. We first introduce the Katok map. In section \ref{sec:main-results} we state our main results. In section \ref{sec:ind-hyperbolic} we describe Young's diffeomorphisms. This structure yields a coding on which we apply the thermodynamical formalism developed in \cite{PesSen05,PesSen08,PesSenZha16}. In section \ref{sec:additional-properties} we show some additional crucial properties of the Katok map which are mentioned above and which are instrumental to our arguments. In section \ref{sec:inducing-Katok} we express the Katok map as a Young's diffeomorphism. The core  technical arguments are in sections \ref{sec:additional-properties} and \ref{sec:inducing-Katok}. Then in section \ref{sec:proof} we apply the thermodynamics of Young's diffeomorphisms from \cite{PesSenZha16} to effect thermodynamics of the Katok map.

{\bf Acknowledgments.} 
The authors would like to thank the Centre Interfacultaire Bernoulli (CIB), Ecole Polytechnique Federale de Lausanne, Switzerland and the ICERM at Brown University where part of this work was done. They also thank A. Zelerowicz for helpful remarks and the referee of the paper for useful comments. S. Senti was partly supported by the EU Marie-Curie IRSES Brazilian-European partnership in Dynamical Systems (FP7-PEOPLE-2012-IRSES 318999 BREUDS), CAPES PVE 158/2012, CNPq and the Shapiro visitor fund from the Pennsylvania State University. Ya. Pesin is partially supported by NSF grants 1101165 and 1400027. K. Zhang is partially supported by the NSERC DISCOVERY grant, reference number 436169-2013. 

\section{Definition of the Katok map}\label{def-Katok}

Consider the automorphism of the two-dimensional torus 
$\T^2=\R^2/\Z^2$ given by the matrix
$A:=\left(\begin{smallmatrix} 2 & 1\\1 & 1\end{smallmatrix}\right)$. We choose a number 
$0<\alpha<1$ and a function $\psi:[0,1]\mapsto[0,1]$ satisfying:
\begin{enumerate}
\item[(K1)] $\psi$ is of class $C^{\infty}$ everywhere but at the origin;
\item[(K2)] $\psi(u)=1$ for $u\ge r_0$ and some $0<r_0<1$;
\item[(K3)] $\psi'(u)> 0$ and is decreasing for  $0<u<r_0$;
\item[(K4)] $\psi(u)=(u/r_0)^\alpha$ for $0\le u\le\frac{r_0}{2}$.
\end{enumerate}
Let $D_r=\{(s_1,s_2): {s_1}^2+{s_2}^2\le r\}$ where $(s_1,s_2)$ is the coordinate system obtained from the eigendirections of $A$. Let $\lambda>1$ be the largest eigenvalue of 
$A$. Setting $r_1:=2r_0\log\lambda$ we have that
\begin{equation}\label{eq:r1}
D_{r_0}\subset\Int A(D_{r_1})\cap\Int A^{-1}(D_{r_1}).
\end{equation}
Consider the system of differential equations in $D_{r_1}$
\begin{equation}\label{batata10}
\frac{ds_1}{dt}= s_1\log\lambda,\quad \frac{ds_2}{dt}=-s_2\log\lambda.
\end{equation}
Observe that $A$ is the time-$1$ map of the local flow generated by this system.

We slow down trajectories of the flow by perturbing the system \eqref{batata10} in $D_{r_0}$ as follows
\begin{equation}\label{batata2}
\begin{aligned}
\frac{ds_1}{dt}=&\quad s_1\psi({s_1}^2+{s_2}^2)\log\lambda\\
\frac{ds_2}{dt}=&- s_2\psi({s_1}^2+{s_2}^2)\log\lambda.
\end{aligned}
\end{equation}
This system of differential equations generates a local flow. Denote by $g$ the time-$1$ map of this flow. The choices of $\psi$, $r_0$ and $r_1$ (see \eqref{eq:r1}) guarantee that the domain of $g$ contains $D_{r_0}$. Furthermore, $g$ is of class $C^\infty$ in $D_{r_0}\setminus \{0\}$ and it coincides with $A$ in some neighborhood of the boundary $\partial D_{r_0}$. Therefore, the map
\begin{equation}\label{mapG}
G(x)=\begin{cases} A(x) & \text{if $x\in\T^2\setminus D_{r_0}$,}\\
g(x) & \text{if $x\in D_{r_0}$}
\end{cases}
\end{equation}
defines a homeomorphism of the torus $\T^2$, which is a $C^\infty$ diffeomorphism everywhere except at the origin. Since $0<\alpha<1$, we have that
$$
\int_0^1\frac{du}{\psi(u)}<\infty.
$$
This implies that the map $G$ preserves the probability measure
$d\nu=\kappa_0^{-1}\kappa\,dm$ where $m$ is the area and the density $\kappa$ is a positive $C^\infty$ function that is infinite at~$0$ and is defined by
$$
\kappa(s_1,s_2):=\begin{cases} (\psi({s_1}^2+{s_2}^2))^{-1}
&\text{if $(s_1,s_2)\in D_{r_0}$},\\ 1 & \text{otherwise}
\end{cases}
$$
and
\[
\kappa_0:=\int_{\T^2}\kappa\,dm.
\]
We further perturb the map $G$ by a coordinate change $\phi$ in 
$\T^2$ to obtain an area-preserving $C^\infty$ map. To achieve this, define a map $\phi$ in $D_{r_0}$ by the formula
\begin{equation}\label{mapshi}
\phi(s_1,s_2):=\frac{1}{\sqrt{\kappa_0({s_1}^2+{s_2}^2)}}
\bigg(\int_0^{{s_1}^2+{s_2}^2}\frac{du}{\psi(u)}\bigg)^{1/2}
(s_1,s_2)
\end{equation}
and set $\phi=\text{Id}$ in $\T^2\setminus D_{r_0}$. Clearly, $\phi$ is a homeomorphism and 
is a $C^{1+\epsilon}$ diffeomorphism for some $\epsilon>0$ and is a $C^\infty$ diffeomorphism outside the origin.
One can show that $\phi$ transfers the measure $\nu$ into the area and that the map $G_{\T^2}=\phi\,\circ\, G\,\circ\,\phi^{-1}$ is a $C^{1+\epsilon}$ diffeomorphism (for some $\epsilon>0$) and is a $C^\infty$ diffeomorphism outside the origin. It is called the \emph{Katok map} (see \cite{Kat79} and also \cite{BarPes13}). 

The following proposition describes some basic properties of this map.

\begin{prop}\cite{Kat79, BarPes13}\label{smoothH}
The map $G_{\T^2}$ has the following properties:
\begin{enumerate}
\item It is topologically conjugated to $A$ via a homeomorphism $H$.
\item It admits two transverse invariant continuous stable and unstable distributions 
$E^s(x)$ and $E^u(x)$ and for almost every point $x$ with respect to area $m$ it has two non-zero Lyapunov exponents, positive in the direction of $E^u(x)$ and negative in the direction of $E^s(x)$. Moreover, the only invariant measure with zero Lyapunov exponents is the Dirac measure at the origin $\delta_0$.
\item It admits two continuous, uniformly transverse, invariant foliations with smooth leaves which are the images under the conjugacy map of the stable and unstable foliations for $A$ respectively.
\item For every $\varepsilon>0$ one can choose $r_0>0$ such that 
$$
|\int\log |D\,G_{\T^2}|E^u|\,dm-\log\lambda|<\varepsilon.
$$
\item It is ergodic with respect to the area $m$.
\end{enumerate}
\end{prop}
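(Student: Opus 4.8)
The plan is to establish each of the five assertions by tracing through the explicit construction of $G_{\T^2}$ and invoking the general theory of non-uniformly hyperbolic systems (Pesin theory). I would organize the argument around the two conjugacies $G \mapsto G_{\T^2}$ (via $\phi$) and $G_{\T^2} \mapsto A$ (via $H$), and exploit the fact that $A$ is a linear Anosov automorphism whose dynamical features are completely understood.

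For (1), I would first observe that $G$ is conjugate to $A$: outside $D_{r_0}$ the two maps literally coincide, and inside $D_{r_0}$ the map $g$ is the time-$1$ map of the flow \eqref{batata2}, which is a time-reparametrization of the flow \eqref{batata10} generating $A$; since the reparametrization only slows orbits down without changing the phase portrait, one builds a homeomorphism intertwining $g$ with $A$ on $D_{r_0}$ that is the identity near $\partial D_{r_0}$, hence patches with the identity outside to give a homeomorphism $h_0$ with $h_0 \circ G = A \circ h_0$. Then $H := h_0 \circ \phi^{-1}$ conjugates $G_{\T^2}$ to $A$. For (5), ergodicity of $m$ for $G_{\T^2}$ is equivalent (since $\phi$ sends $m$ to $\nu$) to ergodicity of $\nu$ for $G$; away from the origin $G$ is a smooth perturbation of an Anosov map preserving a smooth measure with nonzero Lyapunov exponents $m$-a.e. (this is part of (2)), so the Pesin ergodic-component theory applies, and one rules out a nontrivial decomposition by a standard Hopf-type / density-point argument together with the fact that the stable and unstable foliations are globally defined (assertion (3)) — alternatively one invokes that $G_{\T^2}$ is topologically transitive (from (1), since $A$ is) plus absolute continuity of the foliations.

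For (2), I would compute: outside $D_{r_0}$, $DG_{\T^2}$ is essentially conjugate to $A$ and the eigendirections of $A$ give uniform hyperbolicity; inside $D_{r_0}$, the flow \eqref{batata2} preserves the $s_1$- and $s_2$-axes, so the coordinate axes (pushed through $\phi$) are invariant directions along which one contracts/expands at rate governed by $\psi(s_1^2+s_2^2)$, which degenerates only at the origin. Patching these gives two transverse continuous line fields $E^s, E^u$ on all of $\T^2$; the Lyapunov exponents are then $\int \pm\log|DG_{\T^2}|E^{u/s}|\,dm$, which are finite (the logarithmic singularity of $\kappa$ at $0$ is integrable, as is the vanishing of $\psi$) and nonzero because the orbit of $m$-a.e. point spends a definite positive fraction of time outside $D_{r_0}$ (Birkhoff applied to the indicator of $\T^2\setminus D_{r_0}$, using ergodicity). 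The only measure with a zero exponent must be supported on $\{0\}$, the unique point where both directions are neutral, hence equals $\delta_0$. Assertion (3) then follows by integrating the continuous line fields $E^s, E^u$: one shows the fields are uniquely integrable (they are smooth off the origin and the origin is a single neutral fixed point), producing continuous foliations with $C^\infty$ leaves, and checks that $H$ maps them to the linear stable/unstable foliations of $A$ by the intertwining relation. For (4), I would estimate $\int \log|DG_{\T^2}|E^u|\,dm = \int_{\T^2\setminus D_{r_0}} \log\lambda\,dm + \int_{D_{r_0}} \log|DG_{\T^2}|E^u|\,dm$; the first term is $\log\lambda\cdot m(\T^2\setminus D_{r_0})$, and the second is bounded in absolute value by $C\,r_0$ (the integrand is $O(|\log \psi|)$, which is integrable over a disk of radius $O(r_0)$), so the whole quantity converges to $\log\lambda$ as $r_0 \to 0$; choosing $r_0$ small makes the difference less than $\varepsilon$.

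The main obstacle is assertion (2) — specifically, the verification that the a.e.-existing Oseledets directions extend to \emph{globally continuous} transverse distributions on all of $\T^2$, and that the Lyapunov exponents are genuinely nonzero rather than merely a.e. defined. The nonvanishing requires knowing that typical orbits are not trapped asymptotically near the origin, which is where ergodicity (assertion (5)) is used, creating a mild circularity that one resolves by first establishing hyperbolicity and continuity of the distributions (which only needs the explicit form of the flow near $0$ and the Anosov structure away from it), then ergodicity, then nonvanishing of the exponents; getting this ordering right, and handling the borderline integrability at the origin carefully, is the delicate part. Since all of this is carried out in \cite{Kat79} and \cite{BarPes13}, I would present the proof as a guided tour of those references, filling in the conjugacy construction for (1) and the integral estimate for (4) in full detail since those are what the rest of the paper actually uses.
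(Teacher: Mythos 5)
The paper itself gives no proof of this proposition; it is imported wholesale from \cite{Kat79} and \cite{BarPes13}, so your sketch has to stand on its own, and two of its steps fail as stated. The construction of $H$ in item (1) is the serious one: no homeomorphism $h_0$ with $h_0\circ G=A\circ h_0$ can equal the identity on $\T^2\setminus D_{r_0}$. Indeed, if $q\notin D_{r_0}$ satisfies $G^j(q)\notin D_{r_0}$ for $j<n$ and $z:=G^n(q)\in D_{r_0}$, then $G^j(q)=A^j(q)$ for $j\le n$, and the conjugacy relation forces $h_0(z)=A^n(q)=z$; if $M$ is the first time with $G^M(z)\notin D_{r_0}$, it further forces $G^M(z)=h_0(G^M(z))=A^M(h_0(z))=A^M(z)$. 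This is impossible: the slow-down strictly increases the time needed to cross the disk, so at time $M$ the $A$-orbit of $z$ has long since left the neighborhood of $D_{r_0}$ and been stretched along the unstable direction, whereas $G^M(z)$ lies just outside $D_{r_0}$; the two points cannot coincide for a whole arc of entry points $z$. A time change of a flow preserves orbits but does not produce a conjugacy of time-one maps supported in the perturbation region; the true $H$ displaces points everywhere (it is only homotopic to the identity) and is obtained by a global argument (a Franks-type semiconjugacy for a map homotopic to the hyperbolic automorphism promoted to a conjugacy, or a construction through the invariant foliations and Markov coding), which is how the cited references proceed.

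Second, in item (2) you take $E^{s,u}$ inside $D_{r_0}$ to be ``the coordinate axes pushed through $\phi$''. The axes are invariant as curves, but the coordinate line fields are not $DG$-invariant off the axes: the variational equations \eqref{eq:variational} have off-diagonal terms $2s_1s_2\psi'\log\lambda$ which vanish only when $s_1s_2=0$. The distributions must instead be obtained as limits of images of cone fields, and the genuine content of item (2) is that these limits exist at every point and depend continuously on the point, including at the origin --- precisely the kind of estimate carried out for the cones in Lemma~\ref{cone_invar1}. The same oversight touches your estimate for item (4): at points staying off $D_{r_0}$ one has $DG_{\T^2}=A$, yet $E^u(x)$ there is generally not the unstable eigendirection of $A$ (it depends on the backward orbit), so the integrand is not identically $\log\lambda$ on $\T^2\setminus D_{r_0}$, and one must also control the deviation of $E^u$ from the eigendirection, which decays only after the orbit leaves $D_{r_0}$ thanks to the cone contraction. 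Your outline for (5) (Pesin theory, absolute continuity and a Hopf-type argument) and the ordering you propose are consistent with the arguments of \cite{Kat79, BarPes13}, but items (1) and (2) need to be redone before the sketch constitutes a proof.
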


\section{Main Results}
\label{sec:main-results}

Recall that given a continuous map $f$ of a compact metric space $X$ and a \emph{potential} function $\varphi$, an invariant Borel probability measure $\mu_\varphi$ is called an \emph{equilibrium measure} if
$$
h_{\mu_\varphi}(f)+\int_X\varphi d\mu_\varphi =\sup_{\mathcal{M}(f,X)}\{h_\mu(f)+\int_X\varphi d\mu\},
$$
where $\mathcal{M}(f,X)$ is the class of all $f$-invariant ergodic Borel probability measures. The supremum on the right hand side coincides with the topological pressure $P(\varphi)$ of the function $\varphi$. We also recall that $f$ has \emph{exponential decay of correlations} with respect to a measure $\mu\in\mathcal{M}(f,X)$ and a class $\mathcal{H}$ of functions on $X$ if there exists $0<\kappa<1$ such that for any 
$h_1, h_2\in\mathcal{H}$,
$$
\Big |\int h_1(f^n(x))h_2(x)\,d\mu -\int h_1(x)\,d\mu
\int h_2(x)\,d\mu\Big |\le C\kappa^n
$$
for some constant $C=C(h_1,h_2)>0$.

The transformation $f$ satisfies the {\it Central Limit Theorem} (CLT) for a class 
$\mathcal{H}$ functions if for any $h\in\mathcal{H}$, which is not a coboundary (i.e., 
$h\ne g\circ f-g$ for any $g\in\mathcal{H}$), there exists $\sigma>0$ such that
$$
\mu\Bigl\{\frac{1}{\sqrt{n}}\sum_{i=0}^{n-1}(h(f^i(x))-\int
h\,d\mu)<t\Bigr\}\rightarrow\frac{1}{\sigma\sqrt{2\pi}}\int_{-\infty}^t 
e^{-\tau^2/2\sigma^2}\,d\tau.
$$
By Statement 1 of Proposition~\ref{smoothH}, the Katok map $G_{\T^2}$ is topologically conjugated to the hyperbolic total automorphism $A$ which is an expansive map. As an immediate corollary one obtains that  $G_{\T^2}$ admits an equilibrium measure associated to any continuous potential.

Consider the geometric $t$-potential $\varphi_{t}=-t\log|D\,G_{\T^2}|E^u|$. Our goal is to describe the existence, uniqueness and ergodic properties of the equilibrium measures associated to $\varphi_t$.

By the continuity property of $E^u$ (see Statement 2 of Proposition~\ref{smoothH}), 
$\varphi_{t}$ is continuous for all $t$. We hence, obtain
\begin{thm}
For every $t\in\mathbb{R}$ the map $G_{\T^2}$ admits an equilibrium measure associated to $\varphi_t$.
\end{thm}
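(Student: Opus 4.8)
The plan is to invoke general existence results for equilibrium measures of continuous potentials over expansive systems. By Statement 1 of Proposition~\ref{smoothH}, the Katok map $G_{\T^2}$ is topologically conjugate to the linear hyperbolic automorphism $A$ via a homeomorphism $H$, and $A$ is expansive; hence $G_{\T^2}$ is expansive (expansiveness is a topological conjugacy invariant). Since $\T^2$ is a compact metric space, the entropy map $\mu\mapsto h_\mu(G_{\T^2})$ is upper semi-continuous on the space $\mathcal{M}(G_{\T^2},\T^2)$ of invariant Borel probability measures endowed with the weak$^*$ topology — this is the standard consequence of expansiveness (see, e.g., Walters' book). By Statement 2 of Proposition~\ref{smoothH}, the unstable distribution $E^u$ extends to a continuous distribution on all of $\T^2$, so $\varphi_t = -t\log|D\,G_{\T^2}|E^u|$ is a continuous function for every $t\in\mathbb R$; therefore $\mu\mapsto\int_{\T^2}\varphi_t\,d\mu$ is weak$^*$ continuous.

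Next I would combine these two facts. The space $\mathcal{M}(G_{\T^2},\T^2)$ is weak$^*$ compact, the functional $\mu\mapsto h_\mu(G_{\T^2})+\int\varphi_t\,d\mu$ is a sum of an upper semi-continuous term and a continuous term, hence upper semi-continuous, and an upper semi-continuous function on a compact space attains its supremum. That supremum is by definition the topological pressure $P(\varphi_t)$, and any measure attaining it is the desired equilibrium measure. (If one prefers to work with ergodic measures, one notes that by the ergodic decomposition and affinity of both $h_\mu$ and $\mu\mapsto\int\varphi_t\,d\mu$, the supremum over all invariant measures equals the supremum over ergodic ones, and an ergodic component of a maximizing measure is again maximizing.) This yields an equilibrium measure associated to $\varphi_t$ for each $t\in\mathbb R$, which is exactly the assertion of the theorem.

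There is really no substantial obstacle here: the statement is a soft existence result and the entire content is already packaged in Proposition~\ref{smoothH} together with classical facts about expansive maps. The only point requiring a line of justification is the upper semi-continuity of the entropy map, which follows from expansiveness; the continuity of $\varphi_t$ is handed to us by Statement 2 of Proposition~\ref{smoothH}. Uniqueness and finer ergodic properties — which are genuinely hard and are the real subject of the paper — are explicitly \emph{not} claimed here and are deferred to Theorem~\ref{Katok1} and the Young-tower machinery of the later sections. So the proof is essentially a two-line observation, and I would present it as such, citing the expansiveness of $A$, its inheritance by $G_{\T^2}$ through $H$, and the standard variational-principle argument.
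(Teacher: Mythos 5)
Your proposal is correct and follows essentially the same route as the paper: the paper also deduces expansiveness of $G_{\T^2}$ from the conjugacy with $A$ (Statement 1 of Proposition~\ref{smoothH}), notes continuity of $\varphi_t$ from the continuity of $E^u$, and then cites the standard fact that an expansive map on a compact metric space admits an equilibrium measure for every continuous potential. Your added details (upper semi-continuity of the entropy map, weak$^*$ compactness, attainment of the supremum) are just the spelled-out version of that same argument.
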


The following result describes the uniqueness and ergodic properties of the equilibrium measures associated to $\varphi_{t}$.

\begin{thm}\label{Katok1}
Consider the Katok map $G_{\T^2}$ and the geometric $t$-potential $\varphi_{t}$. The following statements hold:
\begin{enumerate} 
\item For any $t_0<0$ one can find $r_0=r_0(t_0)$ such that for every $t_0<t<1$
\begin{itemize}
\item there exists a unique equilibrium measure $\mu_t$ associated to $\varphi_t$;
\item $\mu_t$ has exponential decay of correlations and satisfies the CLT with respect to a class of functions which includes all H\"older continuous functions on $\T^2$; note that uniqueness of $\mu_t$ implies that it is ergodic and since correlations decay, $\mu_t$ is in fact, mixing. 
\end{itemize} 
\item For $t=1$ there exist two equilibrium measures associated to 
$\varphi_1$, namely the Dirac measure at the origin $\delta_0$ and the area $m$.
\item For $t>1$, $\delta_0$ is the unique equilibrium measure associated to 
$\varphi_t$.
\end{enumerate} 
\end{thm}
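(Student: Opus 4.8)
The plan is to treat the three ranges of $t$ separately. For $t\ge 1$ soft ergodic--theoretic arguments suffice, while for $t\in(t_0,1)$ one reduces to the thermodynamic formalism of Young diffeomorphisms from \cite{PesSenZha16} by means of the fine structure of $G_{\T^2}$ built in Sections \ref{sec:additional-properties}--\ref{sec:inducing-Katok}. Two elementary facts will be used throughout. First, since $G_{\T^2}$ preserves area, every invariant ergodic $\mu$ has Lyapunov exponents $\chi_1\ge\chi_2$ with $\chi_1+\chi_2=0$, so $\chi_1\ge 0$ and the sum of positive exponents equals $\chi_1$, and $\mu=\delta_0$ is the only invariant measure with $\chi_1=\chi_2=0$ (Statement 2 of Proposition \ref{smoothH}). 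Second, by the chain rule ($E^u$ being $DG_{\T^2}$--invariant) and Birkhoff's theorem, $\int\log|DG_{\T^2}|E^u|\,d\mu$ equals the Lyapunov exponent of $\mu$ in the direction $E^u$, which by the invariant cone estimates of Lemma \ref{cone_invar1} is never negative; hence it equals $\chi_1$ when $\mu\ne\delta_0$ and equals $0$ when $\mu=\delta_0$, the origin being a neutral fixed point. In particular $\int\varphi_t\,d\delta_0=0$, so $h_{\delta_0}(G_{\T^2})+\int\varphi_t\,d\delta_0=0$ for every $t$.

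\emph{Statements (3) and (2).} For $\mu\ne\delta_0$ (so $\chi_1>0$), Ruelle's inequality gives $h_\mu(G_{\T^2})+\int\varphi_t\,d\mu=h_\mu(G_{\T^2})-t\chi_1\le(1-t)\chi_1$. If $t>1$ this is strictly negative, while $\delta_0$ attains the value $0$; hence $P(\varphi_t)=0$ and $\delta_0$ is the unique equilibrium measure, proving (3). If $t=1$ the same line yields $h_\mu(G_{\T^2})+\int\varphi_1\,d\mu\le 0$ for all $\mu$, with equality for $\mu\ne\delta_0$ exactly when the Pesin entropy formula $h_\mu(G_{\T^2})=\chi_1$ holds, that is (by the Ledrappier--Young characterization of measures satisfying the entropy formula, valid since $G_{\T^2}$ is $C^{1+\epsilon}$) exactly when $\mu$ is an SRB measure. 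The area $m$ is a hyperbolic SRB measure --- Pesin's entropy formula applies to the smooth measure $m$ and its exponents are nonzero by Statement 2 of Proposition \ref{smoothH} --- hence an equilibrium measure, and so is $\delta_0$; thus $P(\varphi_1)=0$ and both are equilibria. Moreover these are the only ergodic equilibria: any other one of positive entropy would be an SRB measure distinct from $m$, which is impossible since $m$ is ergodic (Statement 5 of Proposition \ref{smoothH}) and therefore has a basin of full Lebesgue measure, while any one of zero entropy must have zero exponents and hence coincide with $\delta_0$. This proves (2), with $m$ the unique equilibrium of positive entropy.

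\emph{Statement (1).} Here I would invoke the outcome of Sections \ref{sec:additional-properties}--\ref{sec:inducing-Katok}: $G_{\T^2}$ is a Young diffeomorphism whose base is a hyperbolic rectangle $\Lambda$ lying at positive distance from the origin, whose inducing time $\tau\colon\Lambda\to\N$ is the first return time, and whose partition of $\Lambda$ into full branches of $F=G_{\T^2}^{\tau}$ obeys the uniform hyperbolicity and bounded--distortion bounds proved there (resting on Lemmas \ref{s2estimate}, \ref{bad-delta}, \ref{bad-gamma1}, \ref{cone_invar1}), together with the combinatorial estimate of Lemma \ref{h1}: the number $N_n$ of branches with $\tau\equiv n$ satisfies $\limsup_n\frac1n\log N_n<h_m(G_{\T^2})$, where $h_m(G_{\T^2})=\int\log|DG_{\T^2}|E^u|\,dm$ is the metric entropy of Lebesgue measure. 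I would then check that this data meets the hypotheses of Proposition \ref{geom_poten}, the abstract thermodynamics of Young diffeomorphisms from \cite{PesSenZha16}: the exponential control of $N_n$ strictly below the Lebesgue entropy, combined with the return--time and expansion estimates for $F$ from Section \ref{sec:inducing-Katok}, yields an interval $(t_0,1)$ with $t_0<0$ on which the thermodynamics of the induced (countable, full) shift applies to the potential $\sum_{k=0}^{\tau-1}\varphi_t\circ G_{\T^2}^{k}$ and produces a finite tower equilibrium, while the uniform hyperbolicity and distortion give the regularity needed to lift this equilibrium to $\T^2$, to show it is the unique equilibrium measure for $\varphi_t$, and to deduce exponential decay of correlations and the CLT for a class of observables containing the H\"older functions. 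Applying Proposition \ref{geom_poten} then gives, for every $t\in(t_0,1)$, the measure $\mu_t$ with the asserted properties.

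Finally, for the claim that $|t_0|$ can be made arbitrarily large by shrinking $r_0$, I would trace how $r_0$ enters the left endpoint $t_0$ furnished by Proposition \ref{geom_poten}, essentially through the growth rate $\limsup_n\frac1n\log N_n$ of the branch count and through the expansion exponents of $F$ along branches of large return time. As $r_0\to 0$ the map $G_{\T^2}$ coincides with the Anosov automorphism $A$ on open sets exhausting $\T^2$, so its uniformly hyperbolic behaviour improves and, by Statement 4 of Proposition \ref{smoothH}, $h_m(G_{\T^2})\to\log\lambda$, while the combinatorial growth rate stays bounded away from $\log\lambda$; the resulting widening of the gap that governs the range of validity forces $t_0=t_0(r_0)\to-\infty$. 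I expect the principal difficulty to lie exactly here --- not in quoting Proposition \ref{geom_poten}, but in establishing Lemma \ref{h1} and the expansion estimates for $F$ in a form quantitative enough, with transparent dependence on $r_0$, to drive this limit; by contrast, Statements (2) and (3) are routine once the two facts recorded in the first paragraph are in hand.
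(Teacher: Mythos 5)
Your treatment of Statements (2) and (3) is correct and essentially the paper's own argument: Ruelle's inequality plus $\int\varphi_t\,d\delta_0=0$ gives (3), and for $t=1$ the dichotomy zero exponents (hence $\delta_0$) versus positive exponents with the entropy formula (hence $m$) gives (2). For Statement (1) you follow the paper's route in outline (Young structure from Sections \ref{sec:additional-properties}--\ref{sec:inducing-Katok}, Lemma \ref{h1}, Proposition \ref{geom_poten}), but two genuine gaps remain.

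First, the claim that $t_0(r_0)\to-\infty$ is not driven by the mechanism you describe. In the formula \eqref{number-t0}, $t_0=\frac{h-h_{\mu_1}}{\log\lambda_1-h_{\mu_1}}$, the numerator is bounded in absolute value (by $\log\lambda$), so ``widening the gap'' between the branch-counting exponent $h$ and $h_m(G_{\T^2})$ cannot by itself send $t_0$ to $-\infty$; what must be proved is that the denominator tends to $0$, i.e.\ that $\log\lambda_1=\sup_i\sup_{x\in\Lambda_i^s}\frac{1}{\tau_i}\log|\J F(x)|$ can be made arbitrarily close to $\log\lambda$ (hence to $h_m$) as $r_0\to0$. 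This is the core computation of the paper's proof: decompose each inducing time into blocks according to visits to $D_{r_0}$ and $D_{r_0/8}$, use that the orbit spends at least $Q$ iterates outside $D_{r_0}$ before entering it (condition \eqref{partition}), that the per-step Jacobian inside $D_{r_0/8}$ is at most $\lambda$ by \eqref{eq:D}, and that the time spent in the annulus $D_{r_0}\setminus D_{r_0/8}$ is uniformly bounded (a Lemma~\ref{tran-bound}-type estimate), yielding $\log\lambda_1\le\log\lambda+2\varepsilon$, while Statement 4 of Proposition~\ref{smoothH} gives $h_m\ge\log\lambda-\varepsilon$. You explicitly defer this estimate, so the proposal omits the substantive argument for the $r_0$-dependence of $t_0$; note also that the needed bound is on the average expansion over \emph{every} branch, which is precisely why the quantity $Q$ is built into the choice of the partition element.

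Second, Proposition \ref{geom_poten} gives uniqueness of $\mu_t$ only within $\mathcal{M}(f,Y)$, i.e.\ among measures charging the tower over the chosen base $P$; it does not exclude equilibrium measures living away from $P$. The paper closes this by running the construction over all partition elements satisfying \eqref{partition}, using topological transitivity to identify the resulting measures, observing that the only invariant measure charging none of these elements is $\delta_0$, and discarding $\delta_0$ because its free energy is $0$ while $P(\varphi_t)>0$ for $t_0<t<1$. Your proposal does not address this step, nor the verification of the arithmetic condition and of \eqref{expansion} required for Statement 3 of Proposition \ref{geom_poten} (exponential decay and the CLT); the latter two are easy (Bernoulli property of $A$; monotonicity of $d(f^jx,f^jy)$ along stable and unstable curves) but should be stated.
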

As an immediate corollary of this theorem we obtain the following result.
\begin{cor}
The map $G_{\T^2}$ has unique measure of maximal entropy which has exponential decay of correlations and satisfies the CLT with respect to a class of functions which includes all H\"older continuous functions on $\T^2$.
\end{cor}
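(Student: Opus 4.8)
The plan is to specialize Theorem~\ref{Katok1}(1) to the exponent $t=0$. First I would observe that the geometric $0$-potential is identically zero, $\varphi_0=-0\cdot\log|D\,G_{\T^2}|E^u|\equiv 0$, so that the defining identity for an equilibrium measure of $\varphi_0$ reads $h_\mu(G_{\T^2})=\sup_{\mathcal M(G_{\T^2},\T^2)}h_\mu(G_{\T^2})$; by the variational principle this supremum equals the topological entropy, and $P(\varphi_0)=h_{\mathrm{top}}(G_{\T^2})$. Hence the equilibrium measures associated to $\varphi_0$ are exactly the measures of maximal entropy for $G_{\T^2}$, and "unique equilibrium measure for $\varphi_0$" is synonymous with "unique measure of maximal entropy".

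Next I would simply invoke Theorem~\ref{Katok1}(1). For the Katok map (with the slow-down radius $r_0$ furnished by that theorem) the threshold satisfies $t_0<0<1$, so $t=0\in(t_0,1)$ and the theorem applies verbatim. It then yields simultaneously: there is a unique equilibrium measure $\mu_0$ for $\varphi_0$, i.e. a unique measure of maximal entropy; $\mu_0$ has exponential decay of correlations and satisfies the CLT with respect to a class of functions which includes all H\"older continuous functions on $\T^2$; and, as remarked there, $\mu_0$ is automatically ergodic and mixing. (Existence by itself is also already guaranteed since $G_{\T^2}$ is topologically conjugate to the expansive automorphism $A$, as noted before Theorem~3.1.)

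There is essentially no obstacle here, the corollary being a pure specialization. The only point deserving a line of care is the logical structure of the statement of Theorem~\ref{Katok1}(1), which is phrased "for any $t_0<0$ one can find $r_0=r_0(t_0)$ such that for every $t_0<t<1$\,\dots": one should note that whichever admissible pair $(t_0,r_0)$ is used, the value $t=0$ always lies in the interval $(t_0,1)$ precisely because $t_0<0$. In particular, unlike the statements about $\varphi_t$ for very negative $t$, no smallness assumption on the size of the perturbation is required in order to obtain the measure of maximal entropy and its statistical properties.
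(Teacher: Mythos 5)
Your argument is exactly the paper's intended one: since $\varphi_0\equiv 0$, equilibrium measures for $\varphi_0$ are precisely the measures of maximal entropy, and because $t_0<0$ the value $t=0$ always lies in $(t_0,1)$, so Theorem~\ref{Katok1}(1) gives uniqueness, exponential decay of correlations, and the CLT at once (cf.\ the remark following Proposition~\ref{geom_poten}). The proposal is correct and matches the paper's approach.
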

\begin{rmk}
One can show that the measure $\mu_t$ is Bernoulli and that the \emph{pressure function} $P(t):=P(\varphi_t)$ is real analytic in the open interval $(t_0,1)$ (see \cite{ShaZel17}). In addition, one can show that the area $m=\mu_1$ has polynomial decay of correlations (see \cite{PesSenSha17}). 
\end{rmk}

\begin{rmk}
The Katok map is a two-dimensional analog of the well-known Manneville--Pomeau map $x\mapsto x+x^{1+\alpha} \pmod 1$, where $\alpha\in (0,1)$. For this map the origin is a neutral fixed point (as in the case of the Katok map). The thermodynamics of this map is well-understood (see \cite{PolW99, PreSla92, aL93,PolYur01,LSV99,You99,Sar01a, Sar02,hH04}) and is quite similar to our Theorem \ref{Katok1}:
\begin{enumerate} 
\item $t<1$: the pressure function $P(t)$ is real analytic and decreasing and there is a unique equilibrium measure $\mu_t$ for $\varphi_t$; this measure is Bernoulli, has exponential decay of correlations, and satisfies the CLT with respect to the class of H\"older continuous functions;
\item $t=1$: the pressure function $P(t)$ is non-differentiable at $t=1$, and $\varphi_1$ has two ergodic equilibrium measures. One of these is the absolutely continuous invariant probability measure $\mu_1$ and the other is $\delta_0$. The measure $\mu_1$ is Bernoulli and it has polynomial decay of correlations;
\item $t>1$: the unique equilibrium state for $\varphi_t$ is $\delta_0$.
\end{enumerate}
Our Theorem \ref{Katok1} establishes the exponential decay of correlations and the CLT with respect to a class which contains all Hölder continuous potentials for the equilibrium measures $\mu_t$ for a large set of values of $t$.
However, the crucial difference between our result for the Katok map and the results mentioned above for the Manneville-Pomeau map is that we can establish the uniqueness and describe the statistical properties of the equilibrium measures $\mu_t$ on an arbitrarily large albeit {\bf finite} interval 
$(t_0,1)$ with $t_0<0$. It is an open problem whether given $r_0>0$, the number $t_0$ is indeed finite and, if so, whether a phase transition occurs at $t=t_0$.
\end{rmk}

\begin{rmk}
One can show that the measure $\mu_t$ has exponential decay of correlations with respect to functions $h_1$ which are H\"older continuous and functions $h_2$ from $L^\infty(M,\mu_t)$.
\end{rmk}

\section{Young's Diffeomorphisms.}
\label{sec:ind-hyperbolic}

\subsection{Definition of Young's diffeomorphisms} Consider a $C^{1+\epsilon}$ diffeomorphism $f:M\to M$ of a compact smooth Riemannian manifold $M$. Following \cite{You98} we describe a collection of conditions on the map $f$. 

An embedded $C^1$ disk $\gamma\subset M$ is called an \emph{unstable disk} (respectively, a \emph{stable disk}) if for all $x,y\in\gamma$ we have that
$d(f^{-n}(x), f^{-n}(y))\to 0$ (respectively, $d(f^{n}(x), f^{n}(y))\to 0$) as 
$n\to+\infty$. A collection of embedded $C^1$ disks 
$\Gamma^u=\{\gamma^u\}$\label{sb:gamma} is called a \emph{continuous family of unstable disks} if there exists a homeomorphism
$\Phi: K^s\times D^u\to\cup\gamma^u$ satisfying:
\begin{itemize}
\item $K^s\subset M$ is a Borel subset and $D^u\subset \R^d$ is the closed unit disk for some $d<\dim M$;
\item $x\to\Phi|{\{x\}\times D^u}$ is a continuous map from $K^s$ to the space of $C^1$ embeddings of $D^u$ into $M$ which can be extended to a continuous map of the closure $\overline{K^s}$;
\item $\gamma^u=\Phi(\{x\}\times D^u)$ is an unstable disk.
\end{itemize}
A \emph{continuous family of stable disks} is defined similarly.

We allow the sets $K^s$ to be non-compact in order to deal with overlaps which appear in most known examples including the Katok map.

A set $\Lambda\subset M$ has \emph{hyperbolic product structure} if there exists a continuous family $\Gamma^u=\{\gamma^u\}$ of unstable disks 
$\gamma^u$ and a continuous family $\Gamma^s=\{\gamma^s\}$ of stable disks $\gamma^s$ such that
\begin{itemize}
\item $\text{dim }\gamma^s+\text{dim }\gamma^u=\text{dim } M$;
\item the $\gamma^u$-disks are transversal to $\gamma^s$-disks with an angle uniformly bounded away from $0$;
\item each $\gamma^u$-disks intersects each $\gamma^s$-disk at exactly one point;
\item $\Lambda=(\cup\gamma^u)\cap(\cup\gamma^s)$.
\end{itemize}

A subset $\Lambda_0\subset\Lambda$ is called an \emph{$s$-subset} if it has hyperbolic product structure and is defined by the same family $\Gamma^u$ of unstable disks as $\Lambda$ and a continuous subfamily 
$\Gamma_0^s\subset\Gamma^s$ of stable disks. A \emph{$u$-subset} is defined analogously.

Assume the map $f$ satisfies the following conditions:
\begin{enumerate}
\item[(Y1)] There exists $\Lambda\subset M$ with hyperbolic product structure, a countable collection of continuous subfamilies 
$\Gamma_i^s\subset\Gamma^s$ of stable disks and positive integers $\tau_i$, $i\in\N$ such that the $s$-subsets 
\begin{equation}\label{Lambda}
\Lambda_i^s:=\bigcup_{\gamma\in\Gamma^s_i}\,\bigl(\gamma\cap \Lambda\bigr)\subset\Lambda
\end{equation}
are pairwise disjoint and satisfy:
\begin{enumerate}
\item \emph{invariance}: for every $x\in\Lambda_i^s$ 
$$
f^{\tau_i}(\gamma^s(x))\subset\gamma^{s}(f^{\tau_i}(x)), \,\, f^{\tau_i}(\gamma^u(x))\supset\gamma^u(f^{\tau_i}(x)),
$$
where $\gamma^{u,s}(x)$ denotes the (un)stable disk containing $x$;
\item \emph{Markov property}: $\Lambda_i^u:=f^{\tau_i}(\Lambda_i^s)$ is a $u$-subset of $\Lambda$ such that for all $x\in\Lambda_i^s$ 
$$
\begin{aligned}
f^{-\tau_i}(\gamma^s(f^{\tau_i}(x))\cap\Lambda_i^u)
&=\gamma^s(x)\cap \Lambda,\\
f^{\tau_i}(\gamma^u(x)\cap\Lambda_i^s)
&=\gamma^u(f^{\tau_i}(x))\cap \Lambda.
\end{aligned}
$$
\end{enumerate}
\end{enumerate}
\vspace*{.5cm}
\begin{enumerate}
\item[(Y2)] For every $\gamma^u\in\Gamma^u$ one has
$$
\mu_{\gamma^u}(\gamma^u\cap \Lambda)>0, \quad \mu_{\gamma^u}\left((\overline{\Lambda\setminus\cup\Lambda_i^s)\cap\gamma^u}\right)=0,
$$
where $\mu_{\gamma^u}$ is the leaf volume on $\gamma^u$.
\end{enumerate}
For any $x\in \Lambda^s_i$ define the \emph{inducing time} by $\tau(x):=\tau_i$ and the \emph{induced map} $F: \bigcup_{i\in\N}\Lambda_i^s\to\Lambda$ by
$$
F|_{\Lambda_i^s}:=f^{\tau_i}|_{\Lambda_i^s}.$$
\begin{enumerate}
\item[(Y3)] There exists $0<a<1$ such that for any $i\in\N$ we have:
\begin{enumerate}
\item[(a)] For $x\in\Lambda_i^s$ and $y\in\gamma^s(x)$, 
$$
d(F(x), F(y))\le a\, d(x,y);
$$
\item[(b)] For $x\in\Lambda^s_i$ and $y\in\gamma^u(x)\cap \Lambda_i^s$,
$$
d(x,y)\le a \, d(F(x), F(y)).
$$
\end{enumerate}
\end{enumerate}
For $x\in \Lambda$ let $\J f(x)=\det |Df|_{E^u(x)}|$ and  
$\J F(x)=\det |DF|_{E^u(x)}|$ denote the Jacobian of $Df|_{E^u(x)}$ and 
$DF|_{E^u(x)}$ respectively.
\begin{enumerate}
\item[(Y4)] There exist $c>0$ and $0<\kappa<1$ such that:
\begin{enumerate}
\item[(a)] For all $n\ge 0$, $x\in F^{-n}(\cup_{i\in\N}\Lambda^s_i)$ 
and $y\in\gamma^s(x)$ we have
\[
\left|\log\frac{\J F(F^{n}(x))}{\J F(F^{n}(y))}\right|\le c\kappa^n;
\]
\item[(b)] For any $i_0,\dots, i_n\in\inte$, 
$F^k(x),F^k(y)\in\Lambda^s_{i_k}$ for $0\le k\le n$ and  
$y\in\gamma^u(x)$ we have
\[
\left|\log
\frac{ \J F(F^{n-k}(x))}{\J F(F^{n-k}(y))}\right|\le c\kappa^k.
\]
\end{enumerate}
\end{enumerate}
\begin{enumerate}
\item[(Y5)] There exists $\gamma^u\in\Gamma^u$ such that 
$$
\sum_{i=1}^\infty \tau_i \mu_{\gamma^u}(\Lambda_i^s) <\infty.
$$
\end{enumerate}

\subsection{Thermodynamics of Young's diffeomorphisms} For $t\in\mathbb{R}$ consider the family of geometric $t$-potentials
$$
\varphi_t(x):=-t\log|\J f(x)|.
$$
Denote by 
\begin{equation}\label{eq:lambda1}
\log\lambda_1:=\sup_{i\ge 1}\, \sup_{x\in\Lambda^s_i}\ \frac{1}{\tau_i}\log|\J F(x)|
\le\max_{x\in M}\log|\J f(x)|.
\end{equation}
Further, denote by 
\begin{equation}\label{SN}
S_n=\sharp\,\{\Lambda_i^s\colon \tau_i=n\}.
\end{equation}
We say that the tower satisfies the \emph{arithmetic condition} if the greatest common denominator of the set of integers $\{\tau_i\}$ is one.\footnote{The arithmetic condition implies that the map on the tower generated by $f$ is topologically mixing.}

The following result is an application of Theorem 7.1 in \cite{PesSenZha16} to the case when the inducing time is the first return time to the base.
\begin{prop}\label{geom_poten}
Let $f\colon M\to M$ be a $C^{1+\epsilon}$ diffeomorphism of a compact smooth Riemannian manifold $M$ satisfying Conditions (Y1)-(Y5). Assume that the inducing time $\tau$ is the first return time to the base of the tower. Then the following statements hold:
\begin{enumerate}
\item There exists an equilibrium measure $\mu_1$ for the potential $\varphi_1$ which is the unique SRB (Sinai-Ruelle-Bowen) measure.
\item Assume that for some constants $C>0$ and $0<h<h_{\mu_1}(f)$ (where $h_{\mu_1}(f)$ is the metric entropy of $\mu_1$) we have that 
\begin{equation}\label{s_n}
S_n\le Ce^{hn}.
\end{equation} 
Define 
\begin{equation}\label{number-t0}
t_0:=\frac{h-h_{\mu_1}(f)}{\log\lambda_1-h_{\mu_1}(f)}.
\end{equation}
Then for every $t_0<t<1$ there exists a measure $\mu_t\in\mathcal{M}(f,Y)$, where $Y:=\{f^k(x)
\colon x\in\bigcup \Lambda_i^s,\ 0\le k\le \tau(x)-1  \}$,
 which is a unique equilibrium measure for the potential $\varphi_t$.
\item Assume that the tower satisfies the arithmetic condition. Assume also that there is $K>0$ such that for every $i\ge 0$, every $x,y\in\Lambda_i^s$ and any $0\le j\le\tau_i$,
\begin{equation}\label{expansion}
d(f^j(x),f^j(y))\le K\max\{d(x,y), d(F(x),F(y))\}.
\end{equation} 
Then for every $t_0<t<1$ the measure $\mu_t$ has exponential decay of correlations and satisfies the CLT with respect to a class of functions which contains all H\"older continuous functions on $M$.
\end{enumerate}
\end{prop}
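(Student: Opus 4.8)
The plan is to deduce Proposition~\ref{geom_poten} from Theorem~7.1 of \cite{PesSenZha16}, whose hypotheses are precisely Conditions (Y1)--(Y5) together with the exponential growth bound \eqref{s_n}, and whose conclusions are the existence and uniqueness of an equilibrium measure $\mu_t$ for $\varphi_t$ on the tower space $Y$ for all $t\in(t_0,1)$, with $t_0$ as in \eqref{number-t0}, as well as its exponential decay of correlations and the CLT under the arithmetic condition and the estimate \eqref{expansion}. The only extra hypothesis here is that the inducing time $\tau$ equals the first return time of $f$ to $\Lambda$, and the work consists in checking that this specialization makes the passage between the abstract tower and the manifold transparent and weakens none of the conclusions.

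First I would recall the architecture of the proof of Theorem~7.1. One forms the induced map $F\colon\bigcup_i\Lambda^s_i\to\Lambda$, which by (Y1)--(Y2) is, up to the measure-zero overlaps allowed in (Y2), conjugate to the full shift on the countable alphabet $\N$, and the induced potential $\bar\varphi_t:=\sum_{k=0}^{\tau-1}\varphi_t\circ f^k$. Conditions (Y3)--(Y4) give that $\bar\varphi_t$ is locally H\"older in the symbolic metric, and the bounds \eqref{eq:lambda1}--\eqref{s_n} show that the Gurevich pressure of $\bar\varphi_t$ is finite exactly for $t>t_0$: one estimates $|\J F|$ from below by $\lambda_1^{-\tau}$ up to bounded distortion and the number of symbolic cylinders of height $n$ by $S_n\le Ce^{hn}$, so the relevant partition sum behaves like $\sum_n S_n\lambda_1^{-tn}$ and ceases to converge at $t=t_0$. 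For $t>t_0$ the thermodynamic formalism for countable Markov shifts then yields a unique Gibbs equilibrium state $\bar\mu_t$ for $\bar\varphi_t$, and (Y5) together with the pressure estimate forces $\int\tau\,d\bar\mu_t<\infty$; the Abramov construction $\mu_t:=\bigl(\int\tau\,d\bar\mu_t\bigr)^{-1}\sum_i\sum_{k=0}^{\tau_i-1}(f^k)_*(\bar\mu_t|_{\Lambda^s_i})$ produces an $f$-invariant probability on $Y$. The Abramov formulas for entropy and for the ergodic average of $\tau$, combined with the variational principle on the shift, then identify $\mu_t$ as the unique equilibrium measure for $\varphi_t$ among invariant measures carried by $Y$, and the definition \eqref{number-t0} of $t_0$ is exactly what guarantees both $h_{\mu_t}(f)>0$ and the persistence of the Gibbs property.

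At this point the first-return hypothesis enters. When $\tau$ is the first return time, the sets $f^k(\Lambda^s_i)$ for $0\le k\le\tau_i-1$ form a genuine Rokhlin tower over $\Lambda$ (modulo the overlaps already permitted by (Y2)), so the projection from the symbolic tower onto $Y\subset M$ is essentially one-to-one and $F$ is literally the first return map of $f|_Y$ to $\Lambda$. This is what lets the abstract equilibrium measure on the tower be pushed down to $\mu_t$ on $M$ with no loss of entropy and no spurious identifications, and I would use it to conclude that Statement~1 (the SRB/equilibrium measure at $t=1$) and Statement~2 (uniqueness of $\mu_t$ for $t\in(t_0,1)$) follow verbatim from Theorem~7.1. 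For Statement~3 I would invoke the operator-renewal and Young-tower estimates (as in \cite{You98, You99} and the extensions used in \cite{PesSenZha16}): the arithmetic condition makes the tower map topologically mixing, the uniform hyperbolicity of $F$ from (Y3), the distortion bounds (Y4) and the branch count \eqref{s_n} together yield an exponential tail estimate $\bar\mu_t(\tau>n)\le C\theta^n$ with $\theta<1$, and \eqref{expansion} ensures that a H\"older function on $M$ lifts to a function on the tower lying in the Banach space on which the transfer operator of $F$ has a spectral gap; exponential decay of correlations and the CLT on $Y$, hence on $(M,\mu_t)$, then follow, with the class of admissible observables containing $C^\gamma(M)$ precisely because of \eqref{expansion}.

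The step I expect to be the main obstacle --- and the one where the cited theorem does the real work --- is the sharp location of the threshold $t_0$ in \eqref{number-t0}: one must show that just above $t_0$ the induced potential $\bar\varphi_t$ still has finite pressure and its equilibrium state still has finite $\int\tau$, that at $t=t_0$ this fails, and that the resulting variational problem on the countable shift matches, via the Abramov relations, the variational problem for $\varphi_t$ on $M$ --- so that in particular $t_0\to-\infty$ as $\log\lambda_1-h_{\mu_1}(f)\to 0$, which in the Katok application is the mechanism making $|t_0|$ large for small $r_0$. Since Proposition~\ref{geom_poten} is explicitly an application of Theorem~7.1 to the first-return case, in the present paper this reduces to quoting that theorem and observing that the first-return hypothesis only simplifies the projection from the tower to $M$.
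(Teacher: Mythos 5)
Your proposal takes essentially the same route as the paper: Proposition~\ref{geom_poten} is obtained there simply by invoking Theorem~7.1 of \cite{PesSenZha16} in the first-return-time setting, with the paper only adding (in Remark~\ref{estim0} and the following remark) that the constant $C$ in \eqref{s_n} may be arbitrary, that $\lambda_1$ may be defined as in \eqref{eq:lambda1}, and that the arithmetic condition and \eqref{expansion} must be added as hypotheses for the decay-of-correlations and CLT conclusions, all of which your write-up reflects. One small caveat: your heuristic that the threshold arises because ``$\sum_n S_n\lambda_1^{-tn}$ ceases to converge at $t=t_0$'' is not quite the mechanism (that sum alone would give $h/\log\lambda_1$); in the cited theorem $t_0$ comes from comparing $h-t\log\lambda_1$ with the pressure bound $P(\varphi_t)\ge(1-t)h_{\mu_1}(f)$, but since both you and the paper ultimately just quote the theorem, this does not affect the correctness of the proof as proposed.
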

\begin{rmk}\label{estim0}
In the proof of \cite[Theorem 7.1]{PesSenZha16} the constant $C$ in \eqref{s_n} was assumed to be $1$, but the proof holds true for any $C>0$. Furthermore, the quantity $\lambda_1$ was defined as $\max_{x\in X}|\J f(x)|$ but the proof carries through with $\lambda_1$ defined as in \eqref{eq:lambda1}.

We claim that $\log\lambda_1\ge h_{\mu_1}(f)$, so that $t_0 <0$. To see this, by the entropy formula,
$$
h_{\mu_1}(f)=\lim_{n\to\infty}\frac{1}{n}\log |\J f^n(x)|,
$$
where $x$ is an arbitrary generic point of the measure $\mu_1$ (and the limit exists and is independent of $x$). Consider the subsequence $n_k(x)=\sum_{j=0}^{k-1}\tau(F^j(x))$. We have that
$$
\log |\J f^{n_k}(x)|=\sum_{j=0}^{k-1}\log |\J F(F^j(x))|\le n_k(x)\log\lambda_1
$$
and the claim follows.
\end{rmk}
\begin{rmk}
The requirements in Statement 3 of Proposition \ref{geom_poten} that the tower satisfies the arithmetic condition and that \eqref{expansion} holds need to be added to Theorem 4.5, Statement 2 of Theorem 4.7, and Statement 2 of Theorem 7.7 in \cite{PesSenZha16} for the conclusion of these statements to hold. 
\end{rmk}
\begin{rmk}
Since $t_0< 0$, Proposition \ref{geom_poten} implies in particular, existence and uniqueness of the measure of maximal entropy and establishes exponential decay of correlations and the CLT for this measure. 
\end{rmk}
\begin{rmk}
The measure $\mu_t$ can be shown to be ergodic and since it has exponential decay of correlations it is also mixing. 
\end{rmk}

\section{Some additional properties of the map $G$}\label{sec:additional-properties} 
In this section we establish some crucial properties of the Katok map $G_{\T^2}$ that are in addition to the basic properties described in Proposition\ref{geom_poten}. Since $G_{\T^2}$ is conjugate to the map $G$ given by \eqref{mapG} we will only consider this map. We begin with the following technical lemma. Recall that the number $\alpha$ in property (K4) of the definition of the function $\psi$ satisfies $0<\alpha<1$.
\begin{lem}\label{var-eq1}
For $s=(s_1,s_2)\in D_{\frac{r_0}{2}}$ let 
$$
d_{i,j}=d_{i,j}(s_1,s_2):=\frac{\partial^2 }{\partial s_i\partial s_j}s_2\psi(s_1^2+s_2^2).
$$ 
Then 
\[
\max_{i,j=1,2}\ \left|d_{i,j}\right|\le \frac{6\alpha}{r_0^\alpha}\, (s_1^2+s_2^2)^{\alpha-\frac12}.
\]
\end{lem}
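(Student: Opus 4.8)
The plan is to compute the second partial derivatives $d_{i,j}$ explicitly and then bound each term that appears, using property (K4) which tells us that on $D_{r_0/2}$ we have $\psi(u)=(u/r_0)^\alpha$, so $\psi(u)=r_0^{-\alpha}u^\alpha$, $\psi'(u)=\alpha r_0^{-\alpha}u^{\alpha-1}$ and $\psi''(u)=\alpha(\alpha-1)r_0^{-\alpha}u^{\alpha-2}$. Writing $u=s_1^2+s_2^2$ and $h(s_1,s_2)=s_2\psi(u)$, I would first record $\partial u/\partial s_1=2s_1$, $\partial u/\partial s_2=2s_2$, and then differentiate $h$ twice. For instance $\partial h/\partial s_1 = 2s_1 s_2\psi'(u)$ and $\partial h/\partial s_2 = \psi(u)+2s_2^2\psi'(u)$, and then differentiating once more produces each $d_{i,j}$ as a sum of a bounded number of terms, each of the form (constant)$\cdot$(monomial in $s_1,s_2$)$\cdot\psi'(u)$ or (constant)$\cdot$(monomial)$\cdot\psi''(u)$, plus a single $\psi'(u)$ term in $d_{2,2}$.

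Next I would bound each monomial factor in terms of $u^{1/2}$: since $|s_i|\le u^{1/2}$, any degree-$k$ monomial in $s_1,s_2$ is bounded by $u^{k/2}$. Pairing this with the factor $\psi'(u)=\alpha r_0^{-\alpha}u^{\alpha-1}$ (degree-$3$ monomials appear alongside $\psi'$, giving $u^{3/2}\cdot u^{\alpha-1}=u^{\alpha+1/2}$ — wait, I must be careful: the highest-degree monomials multiplying $\psi''$ are degree $4$, e.g.\ $s_1^2 s_2^2$ or $s_2^4$, giving $u^2\cdot u^{\alpha-2}=u^\alpha$; the monomials multiplying $\psi'$ that matter are degree $2$, giving $u\cdot u^{\alpha-1}=u^\alpha$, but $u^\alpha = u^{1/2}\cdot u^{\alpha-1/2}$, and on $D_{r_0/2}$ we have $u\le r_0^2/4 < 1$ — hmm, actually we need the bound in the stated form $(s_1^2+s_2^2)^{\alpha-1/2}$, so I should factor out exactly $u^{\alpha-1/2}$ from every term). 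The cleanest route: show each term is bounded by (constant)$\cdot r_0^{-\alpha}\cdot u^{\alpha-1/2}$ by checking that the monomial degree is always at least $1$, so after extracting $u^{\alpha-1}$ from $\psi'$ or $u^{\alpha-2}$ from $\psi''$ together with $u^{1/2}$ from one factor of the monomial, the leftover power of $u$ is nonnegative and hence $\le 1$ on $D_{r_0/2}$ (or, more simply, just bound everything by $u^{\alpha-1/2}$ times a constant directly since each remaining $u$-power is $\ge 0$).

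Then I would collect the numerical constants. The coefficient $\alpha$ comes out of $\psi'$ and $\psi''$ (with $|\alpha-1|\le 1$ absorbing the $\psi''$ factor since $0<\alpha<1$), the combinatorial integer coefficients from differentiating ($2$'s from the chain rule on $u$, plus a few small integers from the product rule) are what one must check never exceed $6$ in total per entry, and $r_0^{-\alpha}$ factors out. Comparing the worst entry — likely $d_{2,2}$, which has the most terms ($\partial^2/\partial s_2^2$ of $\psi(u)+2s_2^2\psi'(u)$ yields a $\psi'$ term, several $s_2^2\psi'$-type and $s_2^2\psi''$-type and $s_2^4\psi''$-type terms) — against the claimed bound $6\alpha r_0^{-\alpha}u^{\alpha-1/2}$ should close the estimate. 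The main obstacle is purely bookkeeping: making sure every monomial carries at least one factor of $s_1$ or $s_2$ so the exponent $\alpha-1/2$ (rather than $\alpha-1$ or $\alpha-2$, which would blow up faster) is genuinely the right and achievable power, and then verifying the integer constants really sum to at most $6$; there is no conceptual difficulty, only the risk of a miscounted coefficient.
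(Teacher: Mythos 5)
Your setup is the same as the paper's (differentiate $s_2\psi(s_1^2+s_2^2)$ explicitly using $\psi(u)=r_0^{-\alpha}u^\alpha$ on $D_{r_0/2}$, then use $|s_i|\le u^{1/2}$), but the final constant-tracking step, as you describe it, fails exactly at the entry you single out as the worst one. Writing $u=s_1^2+s_2^2$, the second derivatives are
$d_{1,1}=2s_2\psi'+4s_1^2s_2\psi''$, $d_{1,2}=2s_1\psi'+4s_1s_2^2\psi''$, and $d_{2,2}=6s_2\psi'+4s_2^3\psi''$ (the monomials multiplying $\psi''$ have degree $3$, not $4$ as in your aside; after extracting $u^{\alpha-1/2}$ every leftover $u$-power is exactly $0$, so the smallness of $u$ buys you nothing). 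If you bound term by term with the triangle inequality and absorb $|\alpha-1|\le 1$ into $\psi''$, the diagonal entry gives
\[
|d_{2,2}|\le \bigl(6+4(1-\alpha)\bigr)\,\frac{\alpha}{r_0^\alpha}\,u^{\alpha-\frac12},
\]
and $6+4(1-\alpha)>6$ for every $\alpha\in(0,1)$; so the integer constants do \emph{not} ``sum to at most $6$'' for $d_{2,2}$, and your plan proves the lemma only with a worse constant (up to $10\alpha$), not the stated $6\alpha$.

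The missing idea is the sign cancellation: since $\psi''=\alpha(\alpha-1)r_0^{-\alpha}u^{\alpha-2}$ with $\alpha-1<0$, the $\psi''$ term partially cancels the $\psi'$ terms rather than adding to them. The paper keeps the signed expression and factors, e.g.
\[
d_{2,2}=\frac{6\alpha}{r_0^\alpha}\,s_2\,u^{\alpha-1}\Bigl(1+\tfrac23(\alpha-1)\tfrac{s_2^2}{u}\Bigr),
\]
where the parenthetical factor lies in $(1/3,1]$ (and similarly $d_{1,1},d_{1,2}$ carry factors $1+2(\alpha-1)s_i^2/u\in[-1,1]$), so its absolute value is at most $1$; then $|s_2|\le u^{1/2}$ gives exactly the bound $6\alpha r_0^{-\alpha}u^{\alpha-1/2}$. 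To close your argument you must either incorporate this cancellation or settle for a larger constant, which would then propagate into the $24\alpha$ and $12\alpha$ constants and the threshold $\frac{1-\mu}{72}$ used in Lemma~\ref{bad-delta}; as written, the proposal does not prove the statement with the constant claimed.
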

\begin{proof}
We have 
$$
\begin{aligned}
\frac{\partial}{\partial s_1}\left(s_2\psi(s_1^2+s_2^2)\right)&=\frac{2\alpha}{ r_0^\alpha}\, s_1\,s_2\, (s_1^2+s_2^2)^{\alpha-1},\\
\frac{\partial}{\partial s_2}\left(s_2\psi(s_1^2+s_2^2)\right)&=\frac{1}{r_0^\alpha}\,(s_1^2+s_2^2)^\alpha+\frac{2\alpha}{r_0^\alpha}\, s_2^2\, (s_1^2+s_2^2)^{\alpha-1}.
\end{aligned}
$$
Since $-2\le 2(\alpha-1)\frac{s_1^2}{s_1^2+s_2^2}\le 0$ and $0\le\frac{|s_2|}{\sqrt{s_1^2+s_2^2}}\le 1$, we obtain
$$
\begin{aligned}
|d_{1,1}|&
=\frac{2\alpha}{r_0^\alpha}\, \left|\frac{\partial}{\partial s_1} s_1s_2 (s_1^2+s_2^2)^{\alpha-1}\right| \\
&=\frac{2\alpha}{r_0^\alpha}\,  (s_1^2+s_2^2)^{\alpha-1}|s_2|\left|1+2(\alpha-1)
\frac{s_1^2}{s_1^2+s_2^2}\right|\\
&\le \frac{2\alpha}{r_0^\alpha}\, (s_1^2+s_2^2)^{\alpha-\frac12}.
\end{aligned}
$$
The same argument applies to 
$$
|d_{1,2}|
=\frac{2\alpha}{r_0^\alpha}\, (s_1^2+s_2^2)^{\alpha-1} |s_1|\left|1
 +2(\alpha-1)\frac{s_2^2}{s_1^2+s_2^2}
\right|
$$
and
$$
|d_{2,2}|
=\frac{6\alpha}{r_0^\alpha}\, (s_1^2+s_2^2)^{\alpha-1}|s_2|\left|1 +\frac23(\alpha-1)\frac{s_2^2}{s_1^2+s_2^2}\right|
$$
yielding the desired result.
\end{proof}
Consider the solution $s(t)=(s_1(t),s_2(t))$ of  Equation~\eqref{batata2} with an initial condition $s(0)=(s_1(0),s_2(0))$. Assume it is defined on the {\it maximal} time interval $[0,T]$ for which $G^{-1}(s(0))\notin D_{\frac{r_0}{2}}$ and $G(s(T))\notin D_{\frac{r_0}{2}}$ but $s(t)\in D_{\frac{r_0}{2}}$ for all 
$0\le t\le T$. In particular, $s_1(t)\neq 0$ and $s_2(t)\neq 0$. Setting $T_1=\frac{T}{2}$ we have that $s_1(t)\le s_2(t)$ for all $0\le t\le T_1$ and $s_1(t)\ge s_2(t)$ for all $T_1\le t\le T$. The following statement provides effective lower and upper bounds on the functions $s_1(t)$ and $s_2(t)$. 
\begin{lem}\label{s2estimate}
The following statements hold: 
$$
\begin{aligned}
|s_2(t)|&\ge |s_2(a)|\left(1+2^\alpha\,C_1 \,s_2^{2\alpha}(a)\,(t-a)\right)^{-\frac{1}{2\alpha}},& 0\le  a\le t\le T_1;\\
|s_2(t)|&\le |s_2(a)|\left(1+C_1\,s_2^{2\alpha}(a)\,(t-a)\right)^{-\frac{1}{2\alpha}},& 0\le  a\le t\le T;\\
|s_1(t)|&\ge |s_1(a)|\left(1-C_1\,s_1^{2\alpha}(a)\,(t-a)\right)^{-\frac{1}{2\alpha}},& 0\le  a\le t\le T;\\
|s_1(t)|&\le |s_1(T_1)|\left(1-2^\alpha\,C_1\, s_1^{2\alpha}(T_1)\,(t-T_1)\right)^{-\frac{1}{2\alpha}},& T_1\le t\le T;\\
|s_1(t)|&\le |s_1(b)|\left(1+ C_1\,s_1^{2\alpha}(b)\,(b-t)\right)^{-\frac{1}{2\alpha}},& 0\le t\le b\le T,
\end{aligned}
$$
where $C_1=\frac{2\alpha \log\lambda}{r_0^\alpha}$ is a constant. In particular, 
\begin{equation}\label{T-estimate}
T\le \frac{r_0^\alpha}{\alpha 2^\alpha\log\lambda}\ s_1^{-2\alpha}(T_1).
\end{equation}
\end{lem}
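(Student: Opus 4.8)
The plan is to put the system \eqref{batata2} into its explicit form inside $D_{r_0/2}$ and then to track the two scalar quantities $|s_1(t)|^{-2\alpha}$ and $|s_2(t)|^{-2\alpha}$; after this substitution each of them solves a differential equation whose right-hand side is trapped between two constant multiples of $C_1$, and all the bounds drop out by integration. First, by property (K4), inside $D_{r_0/2}$ one has $\psi(s_1^2+s_2^2)=r_0^{-\alpha}(s_1^2+s_2^2)^\alpha$, so \eqref{batata2} becomes $\dot s_1=r_0^{-\alpha}\log\lambda\; s_1(s_1^2+s_2^2)^\alpha$ and $\dot s_2=-r_0^{-\alpha}\log\lambda\; s_2(s_1^2+s_2^2)^\alpha$. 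The right-hand sides vanish on the axes, so $s_1,s_2$ keep a fixed sign along the orbit and I may work with $|s_1|,|s_2|$; also $|s_1|$ is strictly increasing and $|s_2|$ strictly decreasing on $[0,T]$. I will use, as recorded just before Lemma \ref{s2estimate}, that $|s_1(t)|\le|s_2(t)|$ on $[0,T_1]$, $|s_1(t)|\ge|s_2(t)|$ on $[T_1,T]$, and $T_1=T/2$.

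Differentiating, one finds
\[
\frac{d}{dt}\,|s_2|^{-2\alpha}=C_1\Bigl(1+\frac{s_1^2}{s_2^2}\Bigr)^{\!\alpha},
\qquad
\frac{d}{dt}\,|s_1|^{-2\alpha}=-\,C_1\Bigl(1+\frac{s_2^2}{s_1^2}\Bigr)^{\!\alpha},
\]
where $C_1=\frac{2\alpha\log\lambda}{r_0^\alpha}$. Each factor $(1+(\cdot))^{\alpha}$ is $\ge 1$ everywhere on $[0,T]$, and in addition it is $\le 2^\alpha$ precisely where the two coordinates are comparable: $(1+s_1^2/s_2^2)^\alpha\le 2^\alpha$ on $[0,T_1]$ and $(1+s_2^2/s_1^2)^\alpha\le 2^\alpha$ on $[T_1,T]$. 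Each of the five displayed estimates is then obtained by integrating the relevant one-sided inequality between the indicated times and inverting. For instance, on $[0,T_1]$ we get $\frac{d}{dt}|s_2|^{-2\alpha}\le 2^\alpha C_1$, hence $|s_2(t)|^{-2\alpha}\le|s_2(a)|^{-2\alpha}\bigl(1+2^\alpha C_1 s_2^{2\alpha}(a)(t-a)\bigr)$ for $0\le a\le t\le T_1$, which rearranges to the first inequality; the second is the same with the lower bound $\frac{d}{dt}|s_2|^{-2\alpha}\ge C_1$ on all of $[0,T]$; the third and fifth use $\frac{d}{dt}|s_1|^{-2\alpha}\le -C_1$ on $[0,T]$, integrated forward from $a$ to $t$ resp.\ backward from $b$ to $t$, where the base $1-C_1 s_1^{2\alpha}(a)(t-a)$ is automatically positive because $|s_1(t)|^{-2\alpha}>0$; and the fourth uses $\frac{d}{dt}|s_1|^{-2\alpha}\ge -2^\alpha C_1$ on $[T_1,T]$.

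For \eqref{T-estimate} I would apply the fourth inequality at $t=T$: since the orbit remains in $D_{r_0/2}$ the number $|s_1(T)|$ is finite, so the right-hand side $|s_1(T_1)|\bigl(1-2^\alpha C_1 s_1^{2\alpha}(T_1)(T-T_1)\bigr)^{-1/(2\alpha)}$ must be a finite real number; this forces $2^\alpha C_1 s_1^{2\alpha}(T_1)(T-T_1)<1$, and substituting $T-T_1=T/2$ together with $C_1=\frac{2\alpha\log\lambda}{r_0^\alpha}$ and solving for $T$ yields \eqref{T-estimate}.

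The computations themselves are routine once the substitution $s_i\mapsto|s_i|^{-2\alpha}$ has linearized the equations; the real work — and the main place to be careful — is the bookkeeping: matching each of the five estimates to the interval ($[0,T_1]$, $[T_1,T]$, or $[0,T]$) on which the needed one-sided bound on $(1+(\cdot))^\alpha$ is actually valid, and making sure each power $(\cdot)^{-1/(2\alpha)}$ is taken only where its base is positive. The one input that is a fact about the flow rather than calculus is the identity $T_1=T/2$ together with the monotonicity of $|s_1|-|s_2|$ across $T_1$; this follows from the invariance of the system in $D_{r_0/2}$ under the involution $(s_1,s_2,t)\mapsto(s_2,s_1,-t)$ and the conservation of $s_1s_2$, and I would either cite the statement preceding the lemma or include this short symmetry argument.
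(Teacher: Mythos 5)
Your route is the same as the paper's: inside $D_{r_0/2}$ the flow is the explicit power system, and the five bounds come from integrating the one-sided comparisons obtained from $s_i^2\le s_1^2+s_2^2\le 2\max\{s_1^2,s_2^2\}$ (the paper integrates $\dot s_i$ directly, which is the identical computation on $|s_i|^{-2\alpha}$), with the factor $2^\alpha$ available only on the half-interval where the relevant coordinate is the larger one. Your derivations of the first, second, third and fifth estimates are correct, including the remark that for the third one positivity of $|s_1(t)|^{-2\alpha}$ forces the base $1-C_1s_1^{2\alpha}(a)(t-a)$ to be positive; that works there because the inequality $\frac{d}{dt}|s_1|^{-2\alpha}\le -C_1$ places $|s_1(t)|^{-2\alpha}$ \emph{below} the affine expression.

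The gap is in the fourth estimate and in \eqref{T-estimate}. Integrating $\frac{d}{dt}|s_1|^{-2\alpha}\ge -2^\alpha C_1$ places $|s_1(t)|^{-2\alpha}$ \emph{above} $|s_1(T_1)|^{-2\alpha}\bigl(1-2^\alpha C_1 s_1^{2\alpha}(T_1)(t-T_1)\bigr)$, so positivity of the left-hand side says nothing about the sign of the base: the fourth bound is established only for $t<t^*:=T_1+\tfrac{1}{2^\alpha C_1}s_1^{-2\alpha}(T_1)$. Your derivation of \eqref{T-estimate} is then circular: you evaluate the fourth inequality at $t=T$ and argue that finiteness of $|s_1(T)|$ forces the base to be positive, but that inequality is an \emph{upper} bound proved only up to $t^*$, and an upper bound blowing up is perfectly consistent with $|s_1(T)|$ staying finite, so nothing you have shown excludes $T>t^*$. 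What positivity of $s_1$ actually yields (via $\frac{d}{dt}|s_1|^{-2\alpha}\le -C_1$ on $[T_1,T]$) is $T-T_1\le C_1^{-1}s_1^{-2\alpha}(T_1)$, i.e.\ \eqref{T-estimate} with $\alpha\log\lambda$ in place of $\alpha 2^\alpha\log\lambda$, and the fourth bound only on $[T_1,t^*)$. The paper's own one-line justification ("$s_1(t)$ is assumed positive hence $T-T_1\le\tfrac{1}{2^\alpha C_1}s_1^{-2\alpha}(T_1)$") makes the same jump, so you have reproduced its weakest step rather than missed an idea it contains; but be aware that the sharpened constant does not follow from this method at all: integrating exactly along the invariant hyperbolas $s_1s_2=p$ gives $T-T_1=\tfrac{p^{-\alpha}}{2c}\int_1^{s_1^2(T)/p}w^{\alpha-1}(w^2+1)^{-\alpha}\,dw$ with $c=\log\lambda/r_0^\alpha$ and $p=s_1^2(T_1)$, and since $\int_1^\infty w^{\alpha-1}(w^2+1)^{-\alpha}\,dw>\tfrac{1}{\alpha 2^\alpha}$, orbits passing very close to the origin exceed the claimed $\tfrac{1}{2^\alpha C_1}p^{-\alpha}$. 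So prove and use the $C_1$-version of the fourth bound and of \eqref{T-estimate}; it loses only a factor $2^\alpha\le 2$ and suffices for every later application of the lemma.
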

\begin{proof}
Assume $s_1(t)>0$ and $s_2(t)>0$ for all $0\le t\le T$. Equation~\eqref{batata2} with $\psi(u)=(u/r_0)^\alpha$ for 
$0\le u\le\frac{r_0}{2}$ yields for all $0\le t\le T$ and $i=1,2$
\begin{equation}\label{eq:si}
\frac{ds_i(t)}{dt}=(-1)^{i+1}\frac{\log\lambda}{r_0^\alpha}\,s_i(t)\,(s_1^2(t)+s_2^2(t))^{\alpha}.
\end{equation}
Obviously, $s_i^2(t)\le s_1^2(t)+s_2^2(t)$ always holds and therefore
$$
\frac{ds_1(t)}{dt}\ge\frac{\log\lambda}{r_0^\alpha}\,s_1^{2\alpha+1}(t)
$$
and 
$$
\frac{ds_2(t)}{dt}\le-\frac{\log\lambda}{r_0^\alpha}\,s_2^{2\alpha+1}(t).
$$
Integrating between $0\le a\le b\le T$ yields
$$
s_1^{-2\alpha}(b)-s_1^{-2\alpha}(a)\le -C_1\,(b-a)
$$
and
$$
s_2^{-2\alpha}(b)-s_2^{-2\alpha}(a)\ge C_1\,(b-a)
$$
with $C_1=\frac{2\alpha\log\lambda}{r_0^\alpha}$.
The second and third inequalities now follow from these two bounds with $b=t$ (after observing that $s_1(t)$ is assumed positive hence
$T\le \frac{1}{C_1}\ s_1^{-2\alpha}(0)$). The last inequality also follows from the above by taking $a=t$.

For the first and fourth inequalities, recall that $s_1(t)\le s_2(t)$ for $0\le t\le T_1$ and $s_2(t)\le s_1(t)$ for $T_1\le t\le T$. Therefore,
\begin{equation}\label{eq:upper-bound-s}
\begin{aligned}
s_1^2(t)+s_2^2(t)\le 2\, s_1^2(t)\qquad& \mbox{ if }\ T_1\le t\le T\\
s_1^2(t)+s_2^2(t)\le 2\, s_2^2(t)\qquad& \mbox{ if }\ 0\le t\le T_1.
\end{aligned}
\end{equation}
Equation~\eqref{eq:si} now yields
$$
\begin{aligned}
\frac{ds_1(t)}{dt}\le\ 2^\alpha\,&\frac{\log\lambda}{r_0^\alpha}\,s_1^{2\alpha+1}(t)\qquad& \mbox{ if }\ T_1\le t\le T\\
\frac{ds_2(t)}{dt}\ge\ -2^\alpha\,&\frac{\log\lambda}{r_0^\alpha}\,s_2^{2\alpha+1}(t)\qquad& \mbox{ if }\ 0\le t\le T_1.
\end{aligned}
$$
Integration between $a$ and $b$ yields
$$
\begin{aligned}
s_1^{-2\alpha}(b)-s_1^{-2\alpha}(a)&\ge 
&-2^\alpha\,C_1\,(b-a)\qquad& \mbox{ if }\ T_1\le a\le b\le T
\\
s_2^{-2\alpha}(b)-s_2^{-2\alpha}(a)&\le &2^\alpha\, C_1\,(b-a)
\qquad& \mbox{ if }\ 0\le a\le b\le T_1.
\end{aligned}
$$
The first bound follows by taking $b=t$. The fourth inequality follows by taking $a=T_1$ and $b=t$ and observing that $s_1(t)$ is assumed positive hence
$T-T_1\le \frac{1}{2^\alpha\,C_1}\,s_1^{-2\alpha}(T_1)$. The last inequality follows by taking 
$a=t$.
\end{proof}
Consider another solution $\tilde{s}(t)=(\tilde{s}_1(t),\tilde{s}_2(t))$ of Equation \eqref{batata2} satisfying an initial condition 
$\tilde{s}(0)=(\tilde{s}_1(0),\tilde{s}_2(0))$. For $i=1,2$, we set 
$$
\Delta s_i(t)=\tilde{s}_i(t)-s_i(t).
$$
Our goal is to obtain an upper bound for $\Delta s(t)=\tilde{s}(t)-s(t)$.
\begin{lem}\label{bad-delta}
Given $0<\mu<1$, assume that $s_1(t)\neq 0\neq s_2(t)$ and
\begin{enumerate}
\item $\Delta s_2(t)>0$ and $|\Delta s_1(t)|\le\mu\Delta s_2(t)$ for $t\in[0,T]$; 
\item $\Big|\frac{\Delta s_2}{s_2}(0)\Big|<\frac{1-\mu}{72}$.
\end{enumerate}
Then 
\begin{align*}
\Delta s_2(t)&\le\frac{\Delta s_2(0)}{s_2(0)}s_2(t)\left(1+2^\alpha\,C_1\, s_2^{2\alpha}(0)\,t\,\right)^{-\beta},& 0\le t\le T_1;\\
\Delta s_2(t)&\le\frac{\Delta s_2(T_1)}{s_1(T_1)}s_1(t)\left(1-2^\alpha\,C_1\, s_1^{2\alpha}(T_1)\,(t-T_1)\right)^{-\beta}, & T_1 \le t \le T, 
\end{align*}
where $\beta=\frac{1-\mu}{2^{\alpha+2}}$ and $C_1$ is the constant in Lemma~\ref{s2estimate}. In addition, 
$$
\|\Delta s(T)\|\le\sqrt{1+\mu^2}\ \frac{s_1(T)}{s_2(0)}\ \|\Delta s(0)\|. 
$$
\end{lem}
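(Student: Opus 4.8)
The plan is to reduce everything to the scalar ODE satisfied by $\Delta s_2$. As in the proof of Lemma~\ref{s2estimate} we may assume $s_1(t)>0$ and $s_2(t)>0$. Writing $R:=s_1^2+s_2^2$ and $\tilde R:=\tilde s_1^2+\tilde s_2^2$, subtracting the two copies of \eqref{eq:si} and using the splitting $\tilde s_2\tilde R^\alpha-s_2R^\alpha=\tilde s_2(\tilde R^\alpha-R^\alpha)+R^\alpha\Delta s_2$ gives
$$
\frac{d}{dt}\log\Delta s_2=-\frac{\log\lambda}{r_0^\alpha}\Bigl(R^\alpha+\frac{\tilde s_2(\tilde R^\alpha-R^\alpha)}{\Delta s_2}\Bigr),
$$
while the proof of Lemma~\ref{s2estimate} also gives $\frac{d}{dt}\log s_2=-\frac{\log\lambda}{r_0^\alpha}R^\alpha$ and $\frac{d}{dt}\log s_1=\frac{\log\lambda}{r_0^\alpha}R^\alpha$. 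Thus the whole lemma comes down to controlling the single ``error ratio'' $\tilde s_2(\tilde R^\alpha-R^\alpha)/\Delta s_2$, for which I would use the identity $\tilde R-R=(\tilde s_1+s_1)\Delta s_1+(\tilde s_2+s_2)\Delta s_2$, the bound $|\Delta s_1|\le\mu\Delta s_2$ from hypothesis (1), and concavity of $x\mapsto x^\alpha$ in the two-sided form $\alpha\,\tilde R^{\alpha-1}(\tilde R-R)\le\tilde R^\alpha-R^\alpha\le\alpha\,R^{\alpha-1}(\tilde R-R)$ when $\tilde R\ge R$ (and the reversed version otherwise); these turn every estimate below into an elementary inequality in $s_1,s_2,\Delta s_1,\Delta s_2$.

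I would first run a continuity (bootstrap) argument to stay in the linear regime. On $[0,T_1]$, where $s_1\le s_2$, hypotheses (1)--(2) force $\tilde s_1\le\tilde s_2$ and hence $\tilde R-R\ge(1-\mu)(\tilde s_2+s_2)\Delta s_2>0$; therefore $\frac{d}{dt}\log(\Delta s_2/s_2)=-\frac{\log\lambda}{r_0^\alpha}\,\tilde s_2(\tilde R^\alpha-R^\alpha)/\Delta s_2\le 0$, so $\rho:=\Delta s_2/s_2$ is non-increasing and stays below $(1-\mu)/72$; in particular $\Delta s_2(T_1)/s_1(T_1)=\rho(T_1)\le\rho(0)$ since $s_1(T_1)=s_2(T_1)$. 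On $[T_1,T]$, where $s_2\le s_1$, the quantity $\tilde R-R$ may become negative, but then necessarily $\Delta s_1<0$, so $R-\tilde R\le(\tilde s_1+s_1)|\Delta s_1|\le\mu(\tilde s_1+s_1)\Delta s_2$; combining this with $\tilde R\ge R\bigl(1-2\mu\sigma(1+o(1))\bigr)$, where $\sigma:=\Delta s_2/s_1$, with $2s_1s_2\le R$, and with $\alpha\mu<1$ yields $\tilde s_2(R^\alpha-\tilde R^\alpha)/\Delta s_2\le 2R^\alpha$, hence $\frac{d}{dt}\log(\Delta s_2/s_1)\le 0$, so $\sigma$ is non-increasing on $[T_1,T]$; the bootstrap closes because $\sigma(T_1)=\rho(T_1)<(1-\mu)/72$ leaves ample room in the inequalities just used.

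With this control the two displayed bounds follow by a majorant comparison: one checks $\frac{d}{dt}\log(\Delta s_2/m_1)\le 0$ on $[0,T_1]$ for $m_1(t)=s_2(t)\bigl(1+2^\alpha C_1 s_2^{2\alpha}(0)\,t\bigr)^{-\beta}$, and $\frac{d}{dt}\log(\Delta s_2/m_2)\le 0$ on $[T_1,T]$ for $m_2(t)=s_1(t)\bigl(1-2^\alpha C_1 s_1^{2\alpha}(T_1)(t-T_1)\bigr)^{-\beta}$. Using the lower bounds $s_2^{2\alpha}(t)\ge s_2^{2\alpha}(0)\bigl(1+2^\alpha C_1 s_2^{2\alpha}(0)t\bigr)^{-1}$ and $s_1^{2\alpha}(t)\ge s_1^{2\alpha}(T_1)\bigl(1-2^\alpha C_1 s_1^{2\alpha}(T_1)(t-T_1)\bigr)^{-1}$ from Lemma~\ref{s2estimate}, together with the lower bound $\tilde s_2(\tilde R^\alpha-R^\alpha)/\Delta s_2\gtrsim\alpha(1-\mu)s_2^{2\alpha}$ on $[0,T_1]$ (from concavity, as above) and the upper bound $\tilde s_2(R^\alpha-\tilde R^\alpha)/\Delta s_2\le 2R^\alpha$ on $[T_1,T]$ (from the bootstrap), both conditions reduce to an inequality of the form $\beta\,2^{\alpha+1}\le c\,(1-\mu)$ with an explicit $c>\tfrac12$, which holds since $\beta=\frac{1-\mu}{2^{\alpha+2}}=\tfrac12\cdot\frac{1-\mu}{2^{\alpha+1}}$. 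Finally, for the estimate on $\|\Delta s(T)\|$: hypothesis (1) at $t=T$ gives $\|\Delta s(T)\|\le\sqrt{1+\mu^2}\,\Delta s_2(T)$ and at $t=0$ gives $\Delta s_2(0)\le\|\Delta s(0)\|$, so it is enough to prove $\Delta s_2(T)\le\frac{s_1(T)}{s_2(0)}\Delta s_2(0)$; chaining the two monotonicities of the previous paragraph with $s_1(T_1)=s_2(T_1)$ gives
$$
\Delta s_2(T)\le\frac{s_1(T)}{s_1(T_1)}\Delta s_2(T_1)\le\frac{s_1(T)}{s_1(T_1)}\cdot\frac{s_2(T_1)}{s_2(0)}\Delta s_2(0)=\frac{s_1(T)}{s_2(0)}\Delta s_2(0).
$$

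The main obstacle is the interval $[T_1,T]$: once $s_2\ll s_1$ the drift term $\tilde s_2(\tilde R^\alpha-R^\alpha)/\Delta s_2$ in the $\Delta s_2$-equation loses its favorable sign (because $\tilde R$ can drop below $R$), and one must show that the resulting destabilizing contribution is still dominated by $2R^\alpha$. This is exactly where the strict inequality $\mu<1$ in $|\Delta s_1|\le\mu\Delta s_2$ and the smallness coming from hypothesis (2) are essential, and where the constants $72$ and $2^{\alpha+2}$ in the statement are calibrated so that the elementary inequalities above close.
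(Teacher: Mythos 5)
Your argument is correct, but it is not the paper's argument. The paper proves Lemma~\ref{bad-delta} by expanding the difference of the two vector fields to second order, invoking the Hessian bounds of Lemma~\ref{var-eq1} to control the quadratic remainder, deriving Riccati-type differential inequalities for $\kappa=\Delta s_2/s_2$ on $[0,T_1]$ and $\chi=\Delta s_2/s_1$ on $[T_1,T]$ (this is where the constant $72$ of assumption (2) enters, to make the bracket exceed $\tfrac{1-\mu}{2}$), and then applying Gronwall together with the bounds of Lemma~\ref{s2estimate}. You instead subtract the two copies of \eqref{eq:si} exactly, isolate the error ratio $\tilde s_2(\tilde R^\alpha-R^\alpha)/\Delta s_2$, and control it by first-order (concavity/mean-value) bounds on $x\mapsto x^\alpha$ combined with the cone hypothesis $|\Delta s_1|\le\mu\Delta s_2$; no second derivatives and no Lemma~\ref{var-eq1} are needed for this lemma. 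Your lower bound for the error ratio on $[0,T_1]$ does reproduce the exponent $\beta=\frac{1-\mu}{2^{\alpha+2}}$, since the comparison with the majorant $m_1$ reduces, via the first inequality of Lemma~\ref{s2estimate}, to $\beta\,2^{\alpha+1}\le K(1-\mu)$ with $K$ close to $1$. You also correctly observe that on $[T_1,T]$ the stated bound has a factor $\bigl(1-2^\alpha C_1 s_1^{2\alpha}(T_1)(t-T_1)\bigr)^{-\beta}\ge 1$, so only the sign condition $2R^\alpha+\tilde s_2(\tilde R^\alpha-R^\alpha)/\Delta s_2\ge 0$ (monotonicity of $\Delta s_2/s_1$) is needed there, not a quantitative decay; your elementary estimate, using $\tilde R\ge R\bigl(1-2\mu\sigma(1+O(\sigma))\bigr)$ and $\alpha\mu<1$, closes this with ample room, and the smallness from assumption (2) plays the same stabilizing role that it plays for the quadratic remainder in the paper. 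The final chain $\Delta s_2(T)\le\frac{s_1(T)}{s_1(T_1)}\Delta s_2(T_1)\le\frac{s_1(T)}{s_2(0)}\Delta s_2(0)$ together with $\Delta s_2\le\|\Delta s\|\le\sqrt{1+\mu^2}\,\Delta s_2$ is identical to the paper's. What your route buys is economy (exact identities plus concavity instead of Taylor with Hessian control, and monotonicity instead of a second Gronwall step); what the paper's route buys is that the machinery it sets up (Lemma~\ref{var-eq1} and the explicit differential inequalities \eqref{decreasing-kappa}, \eqref{eq:chi-estimate}) is reused verbatim later in the proof of Lemma~\ref{summable}, so it is not wasted there. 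If you write your version up, do make explicit the two small points you left compressed: the constant $K\ge\tfrac12$ in the $[0,T_1]$ comparison (it requires keeping $\Delta s_2/s_2$ below the threshold of assumption (2), which your monotonicity provides) and the continuity argument keeping $\sigma=\Delta s_2/s_1$ small on $[T_1,T]$.
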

\begin{proof}
We prove the lemma assuming that $s_1(t)$ and $s_2(t)$ are strictly positive. The strictly negative cases follow by symmetry. To simplify notation, set $s_1=s_1(t)$, $s_2=s_2(t)$, $u:=s_1^2+s_2^2$ and 
$\tilde{u}:=\tilde{s}_1^2+\tilde{s}_2^2$. By Equation \eqref{batata2}, we have
\begin{equation}\label{deltas}
\begin{aligned}
\frac{d}{dt}\Delta s_2(t)&=\frac{d}{dt}\tilde{s}_2(t)-\frac{d}{dt}s_2(t)=-(\log\lambda)
\left(\tilde{s}_2\psi(\tilde u)-s_2\psi(u)\right) \\
&=-\log\lambda\left(\frac{\partial}{\partial s_1}\Big(s_2\psi(u)\Big)\Delta s_1+\frac{\partial}{\partial s_2}\Big(s_2\psi(u)\Big)\Delta s_2\right)\\
&\hspace{0.5cm}
-\frac{\log\lambda}{2} \sum_{i,j=1,2}d_{i,j}(\xi_1,\xi_2)(\Delta s_i)(\Delta s_j)
\end{aligned}
\end{equation}
for some $\xi=(\xi_1,\xi_2)$ for which $\xi_i$ lies between $s_i(t)$ and $\tilde{s}_i(t)$ for $i=1,2$. Note that 
$$
\frac{\partial}{\partial s_2}\Big(s_1\psi(u)\Big)=2s_1s_2\psi',
\quad\frac{\partial}{\partial s_2}\Big(s_2\psi(u)\Big)=2s_2^2\psi'+\psi
$$
and hence,
\[
\begin{aligned}
\frac{d}{dt}\left(\frac{\Delta s_2}{s_2}\right)
&=\frac{1}{s_2}\left(\frac{d}{dt}\Delta s_2\right)-\frac{\Delta s_2}{s_2^2}\left(\frac{ds_2}{dt}\right)\\
&=-\log\lambda\left(2\psi'(s_1 \Delta s_1 + s_2 \Delta s_2)
+\frac{\Delta s_2}{s_2}\psi\right)\\
&\hspace{0.5cm}+(\log\lambda)\frac{\Delta s_2}{s_2}\psi
-\frac{\log\lambda}{2} \sum_{i,j=1,2}d_{i,j}(\xi_1,\xi_2)\frac{\Delta s_i\Delta s_j}{s_2}\\
&=-\frac{2\alpha\log\lambda}{r_0^\alpha}\, (s_1^2+s_2^2)^{\alpha-1}(s_1 \Delta s_1 + s_2 \Delta s_2)\\
&\hspace{0.5cm}
-\frac{\log\lambda}{2} \sum_{i,j=1,2}d_{i,j}(\xi_1,\xi_2)\frac{\Delta s_i\Delta s_j}{s_2}. 
\end{aligned}
\]
For $0\le t\le T_1$ we have $0<s_1(t)\le s_2(t)$. Since $|\Delta s_1|\le\mu\Delta s_2$, we find that 
$$
s_1\Delta s_1+s_2\Delta s_2\ge (-s_1\mu+s_2)\Delta s_2\ge (1-\mu)s_2\Delta s_2.
$$
Since $|\Delta s_1|\le \mu\Delta s_2<\Delta s_2$, Lemma~\ref{var-eq1} yields
\begin{equation}\label{eq:dij}
\sum_{i,j=1,2}d_{i,j}(\xi_1,\xi_2)\Delta s_i\Delta s_j
\ge -\frac{24\,\alpha}{r_0^\alpha}\ (\xi_1^2 +\xi_2^2)^{\alpha-\frac12}(\Delta s_2)^2.
\end{equation}
It follows that 
$$
\begin{aligned}
\frac{d}{dt}\left( \frac{\Delta s_2}{s_2}\right)
&\le -(1-\mu) \frac{2\alpha\log\lambda}{ r_0^\alpha}\ (s_1^2+s_2^2)^{\alpha}\frac{s_2^2}{s_1^2+s_2^2}\frac{\Delta s_2}{s_2}\\
&\hspace{0.5cm}+\frac{12 \alpha\log\lambda}{r_0^\alpha}\ s_2^{2\alpha}
\left(\frac{\xi_1^2+\xi_2^2}{s_2^2}\right)^{\alpha-\frac12}\left(\frac{\Delta s_2}{s_2}\right)^2.
\end{aligned}
$$
Using again the fact that $0<s_1(t)\le s_2(t)$ for $0\le t\le T_1$, we obtain that 
$s_2^2\le s_1^2+s_2^2\le 2s_2^2$ and conclude that  
$$
\begin{aligned}
\frac{d}{dt}\left(\frac{\Delta s_2}{s_2}\right)
&\le -(1-\mu)\frac{\alpha\log\lambda}{ r_0^\alpha}\ s_2^{2\alpha}\frac{\Delta s_2}{s_2}\\ 
&\hspace{0.5cm}+ \frac{12\alpha\log\lambda}{r_0^\alpha}\  s_2^{2\alpha} \left(\frac{\xi_1^2+\xi_2^2}{s_2^2}\right)^{\alpha-\frac12}\left(\frac{\Delta s_2}{s_2}\right)^2.
\end{aligned}
$$
Setting $\kappa=\kappa(t)=\frac{\Delta s_2}{s_2}(t)$, we find that
\begin{equation}\label{decreasing-kappa}
\frac{d\kappa}{dt}\le -\frac{\alpha\log\lambda}{r_0^\alpha}s_2^{2\alpha}\kappa\Big(1-\mu - 12\left(\frac{\xi_1^2+\xi_2^2}{s_2^2}\right)^{\alpha-\frac12}\kappa\Big).
\end{equation}
Observe that 
$$
0<s_2\le\xi_2\le\tilde{s}_2=s_2+\Delta s_2, \quad 
\xi_1\le s_1+|\Delta s_1|\le s_2+\mu\Delta s_2.
$$
This implies that
$$
1\le\frac{\xi_2^2}{s_2^2}\le\frac{\xi_1^2+\xi_2^2}{s_2^2}\le (1+\mu\kappa)^2+(1+\kappa)^2<2(1+\kappa)^2.
$$
It follows that
$$
\Bigl(\frac{\xi_1^2+\xi_2^2}{s_2^2}\Bigr)^{\alpha-\frac12}\le 
\begin{cases}
1 & 0<\alpha\le\frac12, \\
(2(1+\kappa)^2)^{\alpha-\frac12} & \frac12\le\alpha<1. 
\end{cases}
$$
Using Assumption (2), it is easy to verify that 
$$
\Big(1-\mu -12\left(\frac{\xi_1^2+\xi_2^2}{s_2^2}\right)^{\alpha-\frac12}\kappa(0)\Big)>\frac{1-\mu}{2}
$$
and Equation \eqref{decreasing-kappa} now yields
$$
\frac{d\kappa}{dt}\Big|_{t=0}<-\frac{(1-\mu)\alpha\log\lambda}{2r_0^\alpha}s_2^{2\alpha}(0) \kappa(0)<0.
$$
Hence, $\kappa(t)$ satisfies  
\begin{equation}\label{kappa-estimate}
0<\kappa(t)<\frac{1-\mu}{72}
\end{equation} 
for all $0\le t<\delta$ for some sufficiently small $\delta$. Therefore, the same argument as above applies yielding
\begin{equation}
\label{eq:smallt}
\frac{d\kappa(t)}{dt}<-\frac{(1-\mu)\alpha\log\lambda}{2r_0^\alpha}s_2^{2\alpha}(t) \kappa(t)<0
\end{equation} 
for all $0\le t<\delta$. Using positivity and continuity of functions $\kappa(t)$ and $s_2(t)$ on the interval $[0,T_1]$, it is easy to see that repeating this argument yields the estimates\eqref{kappa-estimate} and\eqref{eq:smallt} for all 
$0\le t\le T_1$.

By the first inequality in Lemma~\ref{s2estimate}, \eqref{eq:smallt} and Gronwall's inequality, we obtain that
\[  	
\begin{aligned}
\kappa(t)&\le\kappa(0)\exp\left(-\frac{(1-\mu)\alpha\log\lambda}{2r_0^\alpha} 
\int_0^t s_2(\tau)^{2\alpha}\,d\tau\right) \\
&\le\kappa(0)\exp\left(-\frac{(1-\mu)\alpha\log\lambda}{2r_0^\alpha} 
\int_0^t s_2(0)^{2\alpha} (1+C_12^\alpha s_2^{2\alpha}(0)\,\tau)^{-1}\,d\tau\right) \\
&=\kappa(0)\exp\left(-\frac{(1-\mu)\alpha\log\lambda}{2r_0^\alpha} 
\frac{1}{C_12^\alpha}\log(1+C_12^\alpha s_2^{2\alpha}(0)\,t\ )\right)\\ 
&=\kappa(0)\left(1+C_12^\alpha s_2^{2\alpha}(0)\,t\ \right)^{-\frac{1-\mu}{2^{\alpha+2}}}. 
\end{aligned}
\]
This implies the first estimate.

To prove the second estimate, using \eqref{deltas} and arguing as above we obtain 
$$
\begin{aligned}
\frac{d}{dt}\left( \frac{\Delta s_2}{s_1}\right)
&=-\log\lambda \left( 2s_2\psi' \Delta s_1+(2s_2^2\psi' +\psi)\frac{\Delta s_2}{s_1}\right)
-\log\lambda \psi \frac{\Delta s_2}{s_1} \\
&\hspace{0.5cm}-\frac{\log \lambda}{2}\sum_{i,j=1,2}d_{i,j}(\xi_1,\xi_2)
\frac{\Delta s_i\Delta s_j}{s_1}.
\end{aligned}
$$
The assumption that $|\Delta s_1|\le \mu\Delta s_2$ and positivity of 
$s_1, s_2, \psi'$ and $\Delta s_2$ imply that 
$$
\begin{aligned}
\frac{d}{dt}\left( \frac{\Delta s_2}{s_1}\right)
&\le-2\log \lambda (\psi - \mu s_1s_2 \psi')\frac{\Delta s_2}{s_1}\\
&\hspace{0.5cm}-\frac{\log \lambda}{2}\sum_{i,j=1,2}d_{i,j}(\xi_1,\xi_2)
\frac{\Delta s_i\Delta s_j}{s_1}. 
\end{aligned}
$$
We have 
$$
\frac{\psi}{\psi'} -\mu s_1s_2=\frac{1}{\alpha}(s_1^2+s_2^2)-\mu s_1s_2\ge (\frac{1}{\alpha}-\frac{\mu}{2})(s_1^2+s_2^2)= \frac{2-\mu}{2\alpha}(s_1^2+s_2^2).
$$
Together with the bound for $d_{i,j}$ from \eqref{eq:dij} this yields
\begin{equation}
\label{decreasing-kappa2}
\begin{aligned}
\frac{d}{dt}\left( \frac{\Delta s_2}{s_1}\right)\le &- (2-\mu)\frac{\log\lambda}{r_0^\alpha} s_1^{2\alpha}\left(\frac{\Delta s_2}{s_1}\right)\\ 
&+12\alpha\frac{\log\lambda}{r_0^\alpha}s_1^{2\alpha}\left(\frac{\xi_1^2 
+\xi_2^2}{s_1^2}\right)^{\alpha-\frac12}\left(\frac{\Delta s_2}{s_1}\right)^2.
\end{aligned}
\end{equation}
Setting $\chi=\chi(t)=\frac{\Delta s_2}{s_1}(t)$, we find that
\begin{equation}\label{newchi}
\frac{d\chi}{dt}\le -\frac{\log\lambda}{r_0^\alpha}s_1^{2\alpha}\chi\Bigl(2-\mu-12\alpha\left(\frac{\xi_1^2+\xi_2^2}{s_1^2}\Bigr)^{\alpha-\frac12}\chi\right).
\end{equation}
Now note that
$\min\{s_i, \tilde{s}_i\}\le \xi_i\le \max\{s_i, \tilde{s}_i\}$ and $\Delta s_i=\tilde{s_i}-s_i$ for $i=1,2$ and thus
$$
s_i-|\Delta s_i|\le \xi_i\le s_i+|\Delta s_i|.
$$
Since $|\Delta s_1|\le \mu \Delta s_2$, it follows that
$$
\xi_1^2 + \xi_2^2\ge \xi_1^2\ge (s_1 - |\Delta s_1|)^2\ge (s_1-\mu|\Delta s_2|)^2\ge s_1^2\left(1-\mu\frac{|\Delta s_2|}{s_1}\right)^2\ge s_1^2(1-\chi)^2 
$$
and
$$
\frac{\xi_1^2 + \xi_2^2}{s_1^2}
\le(1+\mu\chi)^2+(1+\chi)^2<2(1+\chi)^2
$$
It follows that
$$
\Bigl(\frac{\xi_1^2+\xi_2^2}{s_1^2}\Bigr)^{\alpha-\frac12}\le 
\begin{cases}
(1-\chi)^{2\alpha-1} & 0<\alpha\le\frac12, \\
2^{\alpha-\frac12}(1+\chi)^{{2\alpha-1}} & \frac12\le\alpha<1. 
\end{cases}
$$
Observing that $s_1(T_1) =s_2(T_1)$, we have by the first estimate in the lemma and Assumption (2),
$$ 
0\le \chi(T_1)=\frac{\Delta s_2(T_1)}{s_1(T_1)} = \frac{\Delta s_2(T_1)}{s_2(T_1)} 
\le\frac{\Delta s_2(0)}{s_2(0)} < \frac{1-\mu}{72}. 
$$
Therefore, by Assumption (2), we have that
$$
\Big(2-\mu -12\alpha\left(\frac{\xi_1^2+\xi_2^2}{s_1^2}\right)^{\alpha-\frac12}\chi(T_1)\Big)>
\frac{1-\mu}{2}
$$
and Equation \eqref{newchi} now yields
\begin{equation}\label{eq:chi-estimate}
\frac{d\chi}{dt}\Big|_{t=T_1}<-\frac{(1-\mu)\log\lambda}{2r_0^\alpha}s_1^{2\alpha}(T_1)\chi(T_1)<0.
\end{equation}
Repeating the above argument we conclude that the relations \eqref{eq:chi-estimate} hold for all $T_1\le t\le T$. Therefore, Gronwall's inequality and the fourth inequality in Lemma~\ref{s2estimate} now yield
\[  	
\begin{aligned}
\chi(t)&\le \chi(T_1)\exp\left(-\frac{(1-\mu)\log\lambda}{2r_0^\alpha}  
\int_{T_1}^t s_1^{2\alpha}(\tau)\,d\tau\right) \\
&\le \chi(T_1)\exp\left(-\frac{(1-\mu)\log\lambda}{2r_0^\alpha}  
\int_{T_1}^t s_1^{2\alpha}(T_1) \left(1-C_12^\alpha s_1^{2\alpha}(T_1)(\tau-T_1)\right)^{-1}\,d\tau\right) \\
&=\chi(T_1)\exp\left(-\frac{(1-\mu)\log\lambda}{2r_0^\alpha}\frac{1}{C_12^\alpha}
\log\left(1-C_12^\alpha s_1^{2\alpha}(T_1)(t-T_1)\right)\right) \\
&=\chi(T_1)\left(1-C_12^\alpha s_1^{2\alpha}(T_1)(t-T_1)\right)^{-\frac{1-\mu}{\alpha 2^{\alpha+2}}},
\end{aligned}
\]
thus proving the second estimate. 

To prove the last inequality we apply the last inequality in Lemma~\ref{s2estimate} and Gronwall's inequality to \eqref{eq:chi-estimate} on the interval $[T_1,T]$ and obtain that
\[  	
\begin{aligned}
\chi(T)&\le\chi(T_1)\exp\left(-\frac{(1-\mu)\log\lambda}{2r_0^\alpha}  
\int_{T_1}^T s_1^{2\alpha}(\tau)\,d\tau\right)\\
&\le\chi(T_1)\exp\left(-\frac{(1-\mu)\log\lambda}{2r_0^\alpha}  
\int_{T_1}^T s_1^{2\alpha}(T)\left(1+C_12^\alpha s_1^{2\alpha}(T)(T-\tau)\right)^{-1}d\tau\right)\\
&\le \chi(T_1)\exp\left(\frac{1-\mu}{\alpha 2^{\alpha+2}}
\log\left(\frac{1}{1+C_12^\alpha s_1^{2\alpha}(T)(T-T_1)}\right)\right)\\
&\le \chi(T_1)\,
\left(1+C_12^\alpha s_1^{2\alpha}(T)\,(T-T_1)\right)^{-\frac{1-\mu}{\alpha 2^{\alpha+1}}}\le\chi(T_1).
\end{aligned}
\]
Thus
$$ 
\frac{\Delta s_2(T_1)}{s_2(T_1)} \le \frac{\Delta s_2(0)}{s_2(0)}\quad\mbox{ and } \quad\frac{\Delta s_2(T)}{s_1(T)} \le \frac{\Delta s_2(T_1)}{s_1(T_1)}.  
$$ 
Since $s_1(T_1)=s_2(T_1)$, combining the two inequalities, we get 
$$ 
\Delta s_2(T) \le \frac{s_1(T)}{s_2(0)} \Delta s_2(0). 
$$
Since $|\Delta s_1|\le \mu \Delta s_2$, we have 
$\Delta s_2 \le \|\Delta s\|\le \sqrt{1+\mu^2}\Delta s_2$, and the last estimate in the statement of the lemma follows.  
\end{proof}
For every $x\in \T^2$ we define the following two families of cones
$$
K^+(x):=\{v=(v_1,v_2)\in\R^2\colon |v_2|<\mu|v_1|\},
$$
$$
K^-(x):=\{v=(v_1,v_2)\in\R^2\colon |v_1|<\mu|v_2|\},
$$
where we use the coordinate system in the plane generated by the eigendirections of the matrix $A$. The following result shows that with an appropriate choice of $\mu$ these cone families are invariant under $G$. Observe that in Katok's original construction $\mu=1$ whereas we now require $\mu<1$.
\begin{lem}\label{cone_invar1}
There exists $0 <\mu_0 <1$ such that for all $\mu_0 < \mu <1$ and all $x\in\T^2$
\[
(DG)K^+(x)\subseteq K^+(G(x)), \quad (DG)^{-1}K^-(G(x))\subseteq K^-(x).
\]
\end{lem}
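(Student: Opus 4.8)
The plan is to reduce both inclusions to the single statement $(DG)_xK^+\subseteq K^+$ for all $x$. The cone fields $K^\pm$ do not depend on $x$, and the involution $\sigma(s_1,s_2)=(s_2,s_1)$ conjugates the flow of \eqref{batata2} to its time reversal and $A$ to $A^{-1}$ while interchanging $K^+$ and $K^-$; hence $(DG)^{-1}K^-\subseteq K^-$ is the $\sigma$-image of $(DG)K^+\subseteq K^+$. For $x\notin D_{r_0}$ the inclusion is immediate and valid for every $\mu\in(0,1]$: there $(DG)_x=A=\mathrm{diag}(\lambda,\lambda^{-1})$ in the eigenbasis, and $|v_2|<\mu|v_1|$ gives $|\lambda^{-1}v_2|<\lambda^{-2}\mu|\lambda v_1|<\mu|\lambda v_1|$. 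The origin is fixed and $DG_0=\mathrm{Id}$ because of the slow-down, so the cones are trivially preserved there. Thus the real content is $x\in D_{r_0}\setminus\{0\}$, where $(DG)_x=(Dg)_x=D\Phi_1|_x$ for the flow $\Phi_t$ of \eqref{batata2}, the orbit segment $\{\Phi_t(x):0\le t\le1\}$ staying in $D_{r_1}$ by \eqref{eq:r1}.

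Next I would follow a tangent vector by tracking its slope. Fix $x\in D_{r_0}\setminus\{0\}$ and $v(0)\in K^+$, and let $v(t)=D\Phi_t|_xv(0)$ solve the variational equation $\dot v=DF(\Phi_t(x))v$, where $F(s)=\log\lambda\,(s_1\psi(u),-s_2\psi(u))$, $u=s_1^2+s_2^2$. Differentiating $w:=v_2/v_1$ and inserting the entries of $DF$ gives, on any interval where $v_1\neq0$, the Riccati equation
\[
\dot w=-2\log\lambda\Bigl(s_1s_2\,\psi'(u)\,(1+w^2)+w\bigl(\psi(u)+u\,\psi'(u)\bigr)\Bigr);
\]
this is the infinitesimal counterpart of the computations in the proof of Lemma~\ref{bad-delta}, now without quadratic remainder terms. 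Recall $\psi,\psi'>0$ on $(0,r_0)$ by (K3)--(K4), while $\psi\equiv1$ and $\psi'\equiv0$ on $D_{r_1}\setminus D_{r_0}$ (there the equation reduces to $\dot w=-2\log\lambda\,w$). The aim becomes: $|w(0)|<\mu$ forces $|w(t)|<\mu$ on $[0,1]$; this simultaneously keeps $v_1\neq0$ (work on the maximal interval where $v_1\neq0$: if $v_1$ vanished there $v$ would vanish too, since $|v_2|\le\mu|v_1|$, contradicting $v(0)\neq0$). By continuity it is enough that the $w$-field points strictly inward at $w=\pm\mu$, i.e. $\dot w<0$ at $w=\mu$ and $\dot w>0$ at $w=-\mu$.

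At $w=\mu$ this is $s_1s_2\psi'(1+\mu^2)+\mu(\psi+u\psi')>0$; since $\psi'>0$ and $|s_1s_2|\le u/2$, the left-hand side is at least $\mu\psi-u\psi'\tfrac{(1-\mu)^2}{2}$ (worst case $s_1s_2=-u/2$), and $w=-\mu$ gives the mirror inequality. Both hold once
\[
\frac{2\mu}{(1-\mu)^2}>M,\qquad M:=\sup_{0<u<r_0}\frac{u\,\psi'(u)}{\psi(u)}<\infty,
\]
where $M<\infty$ because $u\psi'/\psi\equiv\alpha$ on $(0,r_0/2]$ by (K4) and $u\psi'/\psi$ is continuous and positive on the compact annulus $r_0/2\le u\le r_0$. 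Since $\tfrac{2\mu}{(1-\mu)^2}\to\infty$ as $\mu\to1^-$, pick $\mu_0\in(0,1)$ with $\tfrac{2\mu_0}{(1-\mu_0)^2}>M$; then for every $\mu\in(\mu_0,1)$ the slope stays in $(-\mu,\mu)$ on $[0,1]$, so $v(1)\in K^+$. Combined with the case $x\notin D_{r_0}$ and the $\sigma$-symmetry, this proves the lemma.

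The step I expect to be the main obstacle is the uniform inward-pointing estimate at the cone boundary, and inside it the ``bad'' configuration $s_1s_2<0$ --- a vector on $\partial K^+$ sheared the wrong way by $DF$. This is precisely why $\mu=1$ (Katok's choice) no longer works and $\mu$ must be taken close to $1$. Handling it cleanly relies on $|s_1s_2|\le u/2$ together with the scaling identity $u\psi'=\alpha\psi$ near the origin from (K4), plus a compactness bound for $u\psi'/\psi$ on $r_0/2\le u\le r_0$ where $\psi$ is not explicit; packaging these into the single threshold $\tfrac{2\mu}{(1-\mu)^2}>M$ is where the care goes.
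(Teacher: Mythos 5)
Your proposal is correct and follows essentially the same route as the paper: the same Riccati equation \eqref{tangent} for the slope along the variational flow, inward-pointing at the cone boundary $\eta=\pm\mu$, with your bounds $|s_1s_2|\le u/2$ and $u\psi'/\psi\le M$ being exactly the paper's $(s_1+s_2)^2\ge 0$ completion of the square and its estimate $\psi/\psi'\ge u/(2\alpha)$, and the stable cone handled by the same time-reversal symmetry. The only differences are cosmetic (explicit threshold $2\mu/(1-\mu)^2>M$ versus the paper's $\varphi(\eta)>0$ near $\eta=1$, plus your more explicit treatment of the origin and of the region outside $D_{r_0}$).
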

\begin{proof}
We shall only prove invariance of the cone family $K^+(x)$ as invariance of the cone family $K^-(x)$ can be obtained by reversing the time. For every $x$ outside of $D_{r_0}$ the invariance of cone family $K^+(x)$ is obvious. We now prove invariance of the cones in $D_{r_0}$. The variational equations for Equation~\eqref{batata2} are
\begin{equation}\label{eq:variational}
\begin{aligned}
\frac{d\zeta_1}{dt}&=\quad(\log\lambda)((\psi+2s_1^2\psi')\zeta_1
+2s_1s_2\psi'\zeta_2),\\
\frac{d\zeta_2}{dt}&=-(\log\lambda)(2s_1s_2\psi'\zeta_1
+(\psi+2s_2^2\psi')\zeta_2)
\end{aligned}
\end{equation}
This yields the following equation for the tangent $\eta=\zeta_2/\zeta_1$
\begin{equation}\label{tangent}
\frac{d\eta}{dt}=
-2\log\lambda\Big((\psi+(s_1^2+s_2^2)\psi')\eta+s_1s_2\psi'(\eta^2+1)\Big).
\end{equation}
If $u=s_1^2+s_2^2\in [r_0/2, r_0]$ then, since $\psi$ is positive and increasing and since $\psi'$ is positive and decreasing, we have that
$$ 
\frac{\psi}{\psi'}(u)\ge\frac{\psi}{\psi'}(r_0/2)=\frac{(1/2)^\alpha}
{\alpha r_0^{-1}(1/2)^{\alpha-1}}=\frac{1}{2\alpha}r_0\ge\frac{1}{2\alpha}u.
$$
If $u \in (0, r_0/2)$, an explicit computation yields 
\[
\frac{\psi}{\psi'}(u) = \frac{r_0^{-\alpha} u^\alpha}{r_0^{-\alpha} \alpha u^{\alpha-1}} = \frac{1}{\alpha}u > \frac{1}{2\alpha} u. 
\]
For $\eta>0$ this implies that
\[
\frac{d\eta}{dt}
\le -2(\log\lambda)\psi'\left((1+\frac{1}{2\alpha})(s_1^2+s_2^2)\eta +s_1s_2(\eta^2+1)\right). 
\]
Note that
\begin{align*}
	 ( 1+ \frac{1}{2\alpha} )& (s_1^2 + s_2^2) \eta + s_1 s_2 (\eta^2 + 1) \\ 
	& = \left(  (1 + \frac{1}{2\alpha})\eta - \frac12(\eta^2  +1) \right) (s_1^2 + s_2^2) + \frac12(\eta^2 + 1) (s_1+s_2)^2 \\
	& \ge  \left(  \frac{1}{2\alpha} \eta - \frac12(\eta -1)^2 \right) (s_1^2 + s_2^2) := \varphi(\eta)(s_1^2 + s_2^2). 
\end{align*}
Since $\varphi(1)=\frac{1}{2\alpha}>0$, there exists $0<\mu_0<1$ such that 
$\varphi(\eta) > 0$ for all $\eta\in [\mu_0, 1]$. As a result, $\frac{d\eta}{dt}<0$ whenever $\eta = \mu > \mu_0$. 

When $\eta <0$ observe that 
\[
\frac{d\eta}{dt}=
2\log\lambda\Big((\psi+(s_1^2+s_2^2)\psi')|\eta| - s_1s_2\psi'(\eta^2+1)\Big).
\]
An argument similar to the above shows that $\frac{d\eta}{dt} > 0$ when 
$\eta=-\mu<-\mu_0$. This proves that the cones are invariant.
\end{proof}

For any $x\in\T^2$ denote
\begin{equation}\label{bad-gamma}
\gamma(x):=\max_{\stackrel{v,w\in K^+(x)}{\|v\|=\|w\|=1}}
\left\{\frac{\angle(DG(x)v, DG(x)w)}{\angle(v,w)}\right\}
\end{equation}
and $\gamma_j(x):=\gamma(G^j(x))$.
\begin{lem}\label{bad-gamma1}
For $x\in D_{\frac{r_0}{2}}$ we have
$$
\prod_{j=0}^{k}\gamma_j(x)\le (1+C_1s_2(0)^{2\alpha}k)^{-\frac{1}{\alpha}},
$$
where $C_1$ is the constant in Lemma \ref{s2estimate}.
\end{lem}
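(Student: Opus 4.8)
The plan is to show that the angle contraction factor $\gamma(x)$ for $x$ in the slow-down domain is controlled by the same quantity that governs the contraction of $s_2$, namely the factor $(1+2^\alpha C_1 s_2^{2\alpha}(0)\,t)^{-1/(2\alpha)}$ appearing in Lemma~\ref{s2estimate} and (raised to the power $\beta$) in Lemma~\ref{bad-delta}. First I would make the connection between $\gamma(x)$ and the variational equation \eqref{eq:variational}: two unit vectors $v,w\in K^+(x)$ correspond to two solutions $\zeta(t),\tilde\zeta(t)$ of \eqref{eq:variational} along the orbit, and the angle $\angle(DG^j(x)v,DG^j(x)w)$ is, up to bounded factors coming from the norms of $\zeta$ and $\tilde\zeta$, comparable to $|\eta(t)-\tilde\eta(t)|$ where $\eta=\zeta_2/\zeta_1$ solves \eqref{tangent}. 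Since the difference of two solutions of the scalar Riccati-type equation \eqref{tangent} transforms exactly like the quantity $\Delta s_2$ transforms under \eqref{batata2} — indeed, \eqref{eq:variational} is the linearization of \eqref{batata2}, so a tangent-to-tangent difference obeys the same linearized dynamics that controls $\Delta s$ — I expect to be able to reuse the Gronwall estimate from the proof of Lemma~\ref{bad-delta} essentially verbatim, obtaining that the ratio of angles at times $0$ and $t$ is bounded by $(1+2^\alpha C_1 s_2^{2\alpha}(0)\,t)^{-\beta}$ on $[0,T_1]$ and a matching expression with $s_1(T_1)$ on $[T_1,T]$.

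Next I would convert this flow estimate into the discrete product. Writing $t=k$ and combining the two regimes via $s_1(T_1)=s_2(T_1)$ exactly as in Lemma~\ref{bad-delta}, the telescoping product $\prod_{j=0}^k \gamma_j(x)$ collapses to $\gamma_k(x)/\gamma_0(x)$-type ratios of the angle, giving a bound of the form $(1+C_1 s_2^{2\alpha}(0)\,k)^{-2\beta}$ up to constants. The target exponent is $\frac1\alpha$, so I need $2\beta\ge \frac1\alpha$, i.e. $\beta\ge\frac{1}{2\alpha}$; since $\beta=\frac{1-\mu}{2^{\alpha+2}}$ is small, the honest statement must be that one shrinks the slow-down neighborhood (equivalently works with $\mu$ and $r_0$ in the admissible range) so that the decay holds with exponent at least $\frac1\alpha$, or — more likely — that the factor actually gained per unit time from the Riccati nonlinearity is stronger than in Lemma~\ref{bad-delta} because here both vectors lie in the \emph{same} narrow cone, so the "$(1-\mu)$" improves to something like "$2$" (compare the constant $2-\mu$ obtained in \eqref{decreasing-kappa2} versus $1-\mu$ in \eqref{decreasing-kappa}). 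I would track the constant carefully through \eqref{tangent}: the coefficient of the linear term there is $-2\log\lambda(\psi+(s_1^2+s_2^2)\psi')$, and using $\psi/\psi'\ge\frac{1}{2\alpha}(s_1^2+s_2^2)$ this is at most $-2\log\lambda\,\psi'(1+\frac{1}{2\alpha})(s_1^2+s_2^2) = -\frac{2\log\lambda}{r_0^\alpha}(1+\frac{1}{2\alpha})(s_1^2+s_2^2)^\alpha\cdot\alpha$, which in the region $0\le t\le T_1$ (where $s_1^2+s_2^2\ge s_2^2$) gives linear decay rate at least $\frac{\log\lambda}{r_0^\alpha}(2\alpha+1)s_2^{2\alpha}$, and dividing by $C_1 2^\alpha = \frac{2\alpha\log\lambda\,2^\alpha}{r_0^\alpha}$ in the Gronwall integration yields exponent $\frac{2\alpha+1}{2^{\alpha+1}\alpha}\ge\frac1\alpha$ for $\alpha\le 1$, as needed (possibly after absorbing the nonlinear Riccati term, which only helps the decay).

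The main obstacle, then, is bookkeeping of constants: I must verify that the linear part of the Riccati equation \eqref{tangent} — not a difference-of-solutions equation this time but the genuine contraction of a single tangent direction toward the stable one — produces a decay exponent of exactly $\frac1\alpha$ (no loss), and that the quadratic term $s_1s_2\psi'(\eta^2+1)$ in \eqref{tangent}, being negative for $\eta<0$ and positive for $\eta>0$ inside $K^+$, can be dropped in the favorable direction while in the unfavorable direction it is dominated because $|\eta|\le\mu<1$ forces $s_1s_2(\eta^2+1)\le 2s_1s_2 \le (s_1^2+s_2^2)$, absorbing cleanly into the linear term. A secondary technical point is controlling $\angle(v,w)$ versus $|\eta_v-\eta_w|$ uniformly: since all vectors stay in $K^+$ where the slopes $\eta$ are bounded in $[-\mu,\mu]$, the map $\eta\mapsto$ direction is bi-Lipschitz with uniform constants, so this conversion costs only a bounded multiplicative constant which is harmless for the asymptotic-in-$k$ bound (and can be absorbed by noting the inequality is trivial for bounded $k$, or by the same telescoping trick that eliminated the endpoint constants in Lemma~\ref{bad-delta}). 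Once these constant-chasing points are pinned down, the estimate follows by the identical Gronwall-plus-Lemma~\ref{s2estimate} mechanism already used twice above.
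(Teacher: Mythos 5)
There is a genuine gap: your route cannot produce the exponent $\tfrac1\alpha$. You correctly notice that recycling the Gronwall estimate of Lemma~\ref{bad-delta} only gives the small exponent $\beta$, but your proposed fix is arithmetically wrong: the inequality $\frac{2\alpha+1}{2^{\alpha+1}\alpha}\ge\frac1\alpha$ is equivalent to $2\alpha+1\ge 2^{\alpha+1}$, which is false for \emph{every} $\alpha\in(0,1)$ (at $\alpha=1$ it reads $3\ge4$, at $\alpha\to0$ it reads $1\ge2$). More structurally, any time-domain Gronwall argument that lower-bounds the decay coefficient by $c\,s_2^{2\alpha}(t)$ and then integrates via the first inequality of Lemma~\ref{s2estimate} inevitably divides by $2^\alpha C_1$ and lands at an exponent of the form $\frac{1}{2^\alpha\alpha}<\frac1\alpha$; the quadratic Riccati term cannot rescue this, and the lost factor matters (the exponent $\frac1\alpha>1$ is exactly what is used later to sum $\sum_n(1+C_1(n_{2l+2}-n))^{-1/\alpha}$, and $\frac{1}{2^\alpha\alpha}>1$ fails for $\alpha$ close to $1$). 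A secondary confusion: $\gamma(x)$ as defined in \eqref{bad-gamma} is a Lipschitz constant for the action on directions, so the relevant object \emph{is} the difference of two solutions of \eqref{tangent}, not the contraction of a single tangent toward $E^u$.

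The paper's proof avoids all the $2^\alpha$ losses by changing the independent variable from $t$ to $s_1$ in \eqref{tangent}, obtaining \eqref{etas1}. For two solutions $\eta_1,\eta_2$ one gets
\[
\frac{d(\eta_1-\eta_2)}{ds_1}=-\frac{2}{s_1}\Bigl(1+\frac{\psi'}{\psi}\bigl(s_1^2+s_2^2+s_1s_2(\eta_1+\eta_2)\bigr)\Bigr)(\eta_1-\eta_2),
\]
and since $|\eta_i|<\mu<1$ forces $s_1^2+s_2^2+s_1s_2(\eta_1+\eta_2)\ge(s_1-s_2)^2\ge0$, Gronwall gives the per-step factor exactly $\exp\bigl(-2\int_{s_1(j)}^{s_1(j+1)}\frac{ds_1}{s_1}\bigr)=\bigl(\frac{s_1(j)}{s_1(j+1)}\bigr)^2=\bigl(\frac{s_2(j+1)}{s_2(j)}\bigr)^2$, the last equality using that orbits of \eqref{batata2} are hyperbolas ($s_1s_2$ is constant). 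The product over $j=0,\dots,k$ then telescopes to $\bigl(\frac{s_2(k)}{s_2(0)}\bigr)^2$, and only at the end does one invoke the second (upper-bound, constant $C_1$, not $2^\alpha C_1$) inequality of Lemma~\ref{s2estimate} to get $(1+C_1s_2^{2\alpha}(0)k)^{-1/\alpha}$. So the missing ideas are the change of variable $t\mapsto s_1$, the use of the invariant $s_1s_2$, and the telescoping — without them your estimate stalls strictly below the stated exponent.
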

\begin{proof}
Let $G^j(x)=(s_1(j), s_2(j))\in D_{\frac{r_0}{2}}$ for all $0\le j \le k$. Equation~\eqref{tangent} implies 
\begin{equation}\label{etas1}
\frac{d\eta}{ds_1}=-2\left((\frac{1}{s_1}+\frac{s_1^2+s_2^2}{s_1} \frac{\psi'}{\psi})\eta+s_2\frac{\psi'}{\psi}(\eta^2+1)\right).
\end{equation}
Let $\eta(s_1)=\eta(s_1,s_1(j),\eta_i)$ be the solution of this differential equation with the initial condition $\eta=\eta_i$ at  $s_1=s_1(j)$ where $i=1,2.$ Then
$$
\frac{d(\eta_1-\eta_2)}{ds_1}=-2\frac{1}{s_1}\left(1+\frac{\psi'}{\psi}(s_1^2+s_2^2+s_1s_2(\eta_1+\eta_2))\right)(\eta_1-\eta_2).
$$
For $|\eta|<\mu<1$ we have $\eta_1+\eta_2\ge -2$. Since both $\psi$ and 
$\psi'$ are positive, this fact and Gronwall's inequality applied to solutions with $s_1>0$ yield
\begin{multline*}
\left|\eta\left(s_1(j+1),s_1(j),\eta_1\right)
-\eta\left(s_1(j+1),s_1(j),\eta_2\right)\right|
\\
\le|\eta_1-\eta_2|\exp\left(-2\int_{s_1(j)}^{s_1(j+1)}\frac{1}{s_1}\left(1+\frac{\psi'}{\psi}(s_1-s_2)^2\right)\,ds_1\right)\\
\le|\eta_1-\eta_2|\exp\left(-2\int_{s_1(j)}^{s_1(j+1)}\frac{ds_1}{s_1}\right)
=|\eta_1-\eta_2|\left(\frac{s_2(j+1)}{s_2(j)}\right)^2,
\end{multline*}
where the last line follows from the fact that the trajectory $(s_1(j), s_2(j))$ of the map $G$ is a hyperbola and hence, the product $s_1(j)s_2(j)$ is constant. Similar arguments hold for the case $s_1<0$. Since
$$
\gamma_j(x)\le\max_{\eta_1, \eta_2} \frac{\left|\eta\left(s_1(j),s_1(j-1),\eta_1\right)-\eta\left(s_1(j),s_1(j-1),\eta_2\right)\right|}{|\eta_1-\eta_2|},
$$
we have that
\[
\prod_{j=0}^{k}\gamma_j(x)\le\left(\frac{s_2(k)}{s_2(0)}\right)^2.
\]
Since all iterates $G^j(x)=(s_1(j), s_2(j))\in D_{\frac{r_0}{2}}$ for $0\le j\le k$, the second inequality in Lemma~\ref{s2estimate} applies and the statement follows.
\end{proof}
The following lemma supplements Lemma~\ref{cone_invar1} by providing controls on the time spent by the orbits in $D_{r_0}\setminus D_{r_0/2}$.

\begin{lem}\label{tran-bound}
There exists $T_0>0$ depending only on $\lambda$ and $\alpha$ such that for any solution $s(t)$ of Equation \eqref{batata2} with $s(0) \in D_{r_0}$, 
$$ 
\max\{t\colon s(t)\in D_{r_0}\setminus D_{r_0/2}\} < T_0. 
$$
\end{lem}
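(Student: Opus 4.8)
The plan is to bound the time spent in the annulus $D_{r_0}\setminus D_{r_0/2}$ by comparing the perturbed flow \eqref{batata2} to the linear flow \eqref{batata10} and exploiting that $\psi$ is bounded below on this annulus. First I would observe that on the set $\{u=s_1^2+s_2^2 : r_0/2\le u\le r_0\}$ the function $\psi(u)=\psi(s_1^2+s_2^2)$ satisfies $\psi(u)\ge\psi(r_0/2)=(1/2)^\alpha$, since by (K3) $\psi$ is increasing on $(0,r_0)$ and by (K4) $\psi(r_0/2)=(1/2)^\alpha$. Hence along any solution segment lying in the annulus, the radial dynamics is driven by a coefficient bounded away from zero: writing $\rho(t)^2 = s_1(t)^2 + s_2(t)^2$, one checks from \eqref{batata2} that $s_1$ is increasing and $s_2$ is decreasing in absolute value, so the trajectory is (a reparametrization of) a branch of the hyperbola $s_1 s_2 = \text{const}$, exactly as in Lemma~\ref{bad-gamma1}.

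The key step is to get a uniform lower bound on the ``speed'' of transit. The natural coordinate to track is $s_1$ (which is monotone increasing on the whole orbit while in $D_{r_0/2}$, and remains monotone while passing through the annulus on the outgoing side; the incoming side is handled symmetrically by reversing time, replacing $s_1$ by $|s_2|$). From \eqref{eq:si}-type computations, $\frac{d s_1}{dt} = (\log\lambda)\, s_1\, \psi(s_1^2+s_2^2) \ge (\log\lambda)(1/2)^\alpha\, s_1$ whenever the point is in the annulus, since $\psi \ge (1/2)^\alpha$ there. Together with the fact that along the orbit the product $s_1 s_2$ is constant, so that $s_1$ ranges over an interval determined purely by the geometry of the hyperbola relative to the circles of radius $\sqrt{r_0/2}$ and $\sqrt{r_0}$, this gives an upper bound on the transit time of the form $T_0 \le \frac{1}{(\log\lambda)(1/2)^\alpha}\,\big(\log s_1^{\text{out}} - \log s_1^{\text{in}}\big)$. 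The ratio $s_1^{\text{out}}/s_1^{\text{in}}$ over a single passage through the annulus is bounded above by a universal constant (at worst on the order of $\sqrt{2}\,/\,$(minimal abscissa of the hyperbola on the inner circle), which is largest for the most ``diagonal'' hyperbola and still finite), and this bound depends only on $r_0$ through scaling, and in fact only on $\alpha$ once one normalizes — but actually the radial ratio $r_0/(r_0/2)=2$ is scale-invariant, so the bound depends only on $\lambda$ and $\alpha$, as claimed. One also has to check the point cannot re-enter the annulus after leaving it while still inside $D_{r_0}$: since $s_1$ is strictly increasing and $|s_2|$ strictly decreasing, $\rho^2$ can cross the level $r_0$ only once going outward and the level $r_0/2$ only once going outward, so the set $\{t: s(t)\in D_{r_0}\setminus D_{r_0/2}\}$ is (at most) two intervals, one on each side of $T_1$, each of length bounded as above; taking $T_0$ to be twice that bound finishes it.

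I expect the main obstacle to be the bookkeeping for orbits that do \emph{not} dip below $D_{r_0/2}$ at all — i.e.\ orbits that enter $D_{r_0}$, stay in the annulus $D_{r_0}\setminus D_{r_0/2}$ the whole time, and leave — since for these the partition into an ``incoming'' and ``outgoing'' half around $T_1$ used in Lemmas~\ref{s2estimate}-\ref{bad-gamma1} is not directly available and the monotone coordinate switches from $|s_2|$ to $s_1$ somewhere in the annulus. This is handled by the same device: split at the instant $T_1$ where $s_1 = |s_2|$ (the closest approach to the origin along the hyperbola); on $[0,T_1]$ use that $|s_2|$ is decreasing with $\frac{d|s_2|}{dt} \le -(\log\lambda)(1/2)^\alpha |s_2|$ and on $[T_1,T]$ use the $s_1$ estimate, and in each half the relevant coordinate changes by at most a universal factor because it stays between $\sqrt{r_0/2}/\sqrt{2}$ and $\sqrt{r_0}$. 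The rest is the routine integration $\int \frac{d\sigma}{\sigma} = \log(\text{ratio})$ together with the uniform lower bound $\psi \ge (1/2)^\alpha$, and the constant $T_0$ that emerges depends only on $\lambda$ and $\alpha$.
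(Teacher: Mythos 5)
Your argument is correct, and it reaches the same constant-speed mechanism as the paper (the lower bound $\psi\ge 2^{-\alpha}$ on the annulus plus the invariance of $s_1s_2$), but the bookkeeping is genuinely different. The paper uses $u=s_1^2+s_2^2$ and $s_1^2$ as \emph{additive} clocks and splits into two cases according to the size of the invariant $s_1s_2$: if $4s_1^2s_2^2\le r_0^2/8$ then $|du/dt|\ge c\,r_0$ throughout the annulus, so each radial crossing of the annulus (of $u$-width $r_0/2$) takes time at most $2^{\alpha}/\log\lambda$; if $4s_1^2s_2^2> r_0^2/8$ then $s_1^2>r_0/32$, whence $\frac{d}{dt}(s_1^2)\ge c\,r_0$ and the orbit leaves $D_{r_0}$ altogether within a bounded time. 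That case split is forced precisely because $du/dt=2\psi\log\lambda\,(s_1^2-s_2^2)$ vanishes near the vertex $|s_1|=|s_2|$ of the hyperbola, where the radial clock stalls. Your proof sidesteps this degeneracy by splitting instead at the closest-approach time and using a \emph{logarithmic} clock on the dominant coordinate: $\frac{d}{dt}\log|s_1|=\psi\log\lambda\ge 2^{-\alpha}\log\lambda$ on the outgoing half (and symmetrically $|s_2|$ on the incoming half), while the dominant coordinate is trapped between $\sqrt{r_0}/2$ and $\sqrt{r_0}$ in the annulus, so its logarithm can change by at most $\log 2$; this yields the explicit bound $T_0\le C\,2^{\alpha}\log 2/\log\lambda$ with no case analysis, and the strict monotonicity of $s_1^2-s_2^2$ correctly justifies your claim that the annulus is visited in at most two time intervals. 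In both proofs the factors of $r_0$ cancel, so $T_0$ depends only on $\lambda$ and $\alpha$. One expository caveat: the remark in your middle paragraph about the ratio $s_1^{\mathrm{out}}/s_1^{\mathrm{in}}$ being controlled by the ``minimal abscissa of the hyperbola on the inner circle'' is misleading, since on the incoming side $s_1$ can be arbitrarily small near the stable axis; but your final paragraph repairs this by tracking $|s_2|$ on the incoming half, which is the correct statement and should replace the earlier phrasing.
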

\begin{proof}
By Equation \eqref{batata2}, for $s_1\le s_2$, we have 
$$ 
\frac{du}{dt} = 2 \psi \log\lambda (s_1^2 - s_2^2) = - 2\psi \log\lambda (u^2 - 4s_1^2s_2^2)^{\frac12},
$$
where $u=s_1^2 + s_2^2$. For $s_2\le s_1$ this reads 
$$
\frac{du}{dt} = 2\psi \log\lambda (u^2 - 4s_1^2s_2^2)^{\frac12}.
$$
We consider the two cases depending on the value of $s_1s_2$, which is invariant under the flow. 

{\bf Case 1}: $4s_1^2s_2^2 \le r_0^2/8$. Under the assumptions of $s_1 \le s_2$ and $r_0/2 \le u \le r_0$, we have 
$$ 
\frac{du}{dt} \le -2\psi \log\lambda (r_0^2/4 - r_0^2/8)^{\frac12} \le -\frac{1}{\sqrt{2}}\psi\log \lambda r_0\le - 2^{-\alpha -1}\log\lambda r_0,  
$$
where we use the fact that by our assumption 
$\psi(u)\ge\psi(r_0/2)=2^{-\alpha}$. Then starting from $u(0)=r_0$, it takes at most $2^\alpha/ \log\lambda$ time to reach $u=r_0/2$, unless the assumption $s_1 \le s_2$ is violated. In the latter case, by symmetry, the orbit will leave $D_{r_0}$ in at most $2\times 2^\alpha/\log\lambda$ time. 

{\bf Case 2}: $4s_1^2s_2^2 > r_0^2/8$.  Using 
$r_0 \ge s_1^2+ s_2^2 \ge s_2^2$, we have 
$r_0^2/8 < 4s_1^2 s_2^2 \le 4s_1^2 r_0$ and hence, $s_1^2 > r_0/32$. By Equation \eqref{batata2}, 
$$ 
\frac{d}{dt}(s_1^2) = 2s_1^2 \psi \log \lambda > \frac{r_0^2}{16} 2^{-\alpha} \log\lambda. 
$$
In this case $s_1^2$ will increase to $r_0$ in at most 
$16\cdot 2^\alpha/\log\lambda$ time, and the orbit leaves $D_{r_0}$. Similar arguments hold when $s_2\le s_1$.
\end{proof}

\section{The Katok map as a Young's diffeomorphism}\label{sec:inducing-Katok}

\subsection{A tower representation for the automorphism $A$} Consider a finite Markov partition $\tilde{\Px}$ for the automorphism $A$ and let 
$\tilde{P}\in\tilde{\Px}$ be a partition element which does not contain the origin. Given $\delta>0$, we can always choose the Markov partition $\tilde{\Px}$ in such a way that $\text{diam }(\tilde{P})<\delta$ and 
$\tilde{P}=\overline{\Int\tilde{P}}$ for any $\tilde{P}\in\tilde{\Px}$. For a point 
$x\in\tilde{P}$ denote by $\tilde{\gamma}^s(x)$ (respectively, 
$\tilde{\gamma}^u(x)$) the connected component of the intersection of 
$\tilde{P}$ with the stable (respectively, unstable) leaf of $x$, which contains $x$. We say that $\tilde{\gamma}^s(x)$ and $\tilde{\gamma}^u(x)$ are \emph{full length} stable and unstable curves through $x$.

Given $x\in\tilde{P}$, let $\tilde{\tau}(x)$ be the first return time of $x$ to 
$\Int\tilde{P}$. For all $x$ with $\tilde{\tau}(x)<\infty$ denote by
$$
\tilde{\Lambda}^s(x)=\bigcup_{y\in\tilde{U}^u(x)\setminus\tilde{A}^u(x)}\,\tilde{\gamma}^s(y),
$$ 
where $\tilde{U}^u(x)\subseteq\tilde{\gamma}^u(x)$ is an interval containing 
$x$ and open in the induced topology of $\tilde{\gamma}^u(x)$, and 
$\tilde{A}^u(x)\subset\tilde{U}^u(x)$ is the set of points which either lie on the boundary of the Markov partition or never return to the set $\tilde{P}$. Note that the one-dimensional Lebesgue measure in $\tilde{\gamma}^u(x)$ of $\tilde{A}^u(x)$ is zero. One can choose $\tilde{U}^u(x)$ such that
\begin{enumerate}
\item for any $y\in\tilde{\Lambda}^s(x)$ we have $\tilde{\tau}(y)=\tilde{\tau}(x)$;
\item for any $y\in\tilde{P}$ such that $\tilde{\tau}(y)=\tilde{\tau}(x)$ we have 
$y\in\tilde{\Lambda}^s(x)$.
\end{enumerate}
Moreover, the image under $A^{\tilde{\tau}(x)}$ of $\tilde{\Lambda}^s(x)$ is a $u$-subset containing $A^{\tilde{\tau}(x)}(x)$. It is easy to see that for any $x,y\in\tilde{P}$ with finite first return time the sets $\tilde{\Lambda}^s(x)$ and 
$\tilde{\Lambda}^s(y)$ either coincide or are disjoint. Thus we have a countable collection of disjoint sets $\tilde{\Lambda}_i^s$ and numbers $\tilde{\tau}_i$ which give a representation of the automorphism $A$ as a Young's diffeomorphism for which the set 
$$
\tilde{\Lambda}=\bigcup_{i\ge 1}\tilde{\Lambda}_i^s
$$ 
is the base of the tower, the sets $\tilde{\Lambda}_i^s$ are the $s$-sets and the numbers $\tilde{\tau}_i$ are the inducing times. Moreover, the set 
$\tilde{\Lambda}$ has direct product structure given by the full length stable and unstable curves, the $s$-sets $\tilde{\Lambda}_i^s$ are disjoint and so are the corresponding $u$-sets 
$\tilde{\Lambda}_i^u=A^{\tilde{\tau}_i}(\tilde{\Lambda}_i^s)$. It is easy to see that Conditions (Y1), (Y3), (Y4) hold. Condition (Y2) is satisfied since 
$x\in(\overline{\tilde\Lambda\setminus\cup\,\tilde\Lambda_i^s)\cap\tilde\gamma^u}$ implies that either $x$ lies on the boundary of the Markov partition or it never returns to the $\Int\tilde P$. Since the inducing time is the first return time to the base, by Kac's formula, it is integrable. Hence, Condition (Y5) holds too. We shall obtain an exponential bound to the number $S_n$ of $s$-sets with the given inducing time. 
\begin{lem}\label{h1}
There exists $h<h_{\text{top}}(A)$ such that 
$$
S_n\le e^{hn}.
$$
\end{lem}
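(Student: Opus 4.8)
The plan is to count, for each $n$, the number $S_n$ of $s$-sets $\tilde\Lambda_i^s$ whose inducing time $\tilde\tau_i$ equals $n$, and to show this count grows slower than $e^{nh_{\text{top}}(A)}$. The key point is that the inducing time here is the \emph{first return time} to $\Int\tilde P$, which ties the combinatorics of the $s$-sets to excursions of $A$-orbits away from the fixed element $\tilde P$. First I would recall that each $\tilde\Lambda_i^s$ is determined by a maximal subinterval of a full-length unstable curve on which $\tilde\tau$ is constant, equal to some $n$; by the Markov property, such a piece corresponds to a cylinder $[\tilde P_0\tilde P_1\cdots\tilde P_{n}]$ in the symbolic coding of $A$ by $\tilde\Px$ with $\tilde P_0=\tilde P_n=\tilde P$ and $\tilde P_j\neq\tilde P$ (more precisely $\tilde P_j\cap\Int\tilde P=\varnothing$, i.e. the orbit does not enter the interior of $\tilde P$) for $1\le j\le n-1$. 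Hence $S_n$ is bounded by the number of admissible words of length $n$ over the alphabet $\tilde\Px$ of this restricted form.

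Next I would pass to the subshift of finite type $(\Sigma_B,\sigma)$ conjugate (off the boundary) to $A$, with transition matrix $B$ indexed by $\tilde\Px$. Deleting the state $\tilde P$ — equivalently forbidding all words that pass through $\tilde P$ in the interior — gives a subshift of finite type $\Sigma_{B'}$ on the remaining states, with transition matrix $B'$ obtained from $B$ by striking out the row and column of $\tilde P$. The number of first-return words of length $n$ is at most the number of paths of length $n-2$ in $B'$ with prescribed endpoints (the states adjacent to $\tilde P$), so $S_n\le C\,\|B'^{\,n-2}\|\le C'\,\rho(B')^{\,n}$ for large $n$, where $\rho(B')$ is the spectral radius of $B'$. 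The conclusion then follows by taking $h$ to be any number with $\log\rho(B')<h<\log\rho(B)=h_{\text{top}}(A)$, using the standard fact that deleting a state from an irreducible $0$-$1$ matrix strictly decreases the spectral radius, together with the Perron--Frobenius theorem identifying $\log\rho(B)$ with the topological entropy of $A$.

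The main obstacle — and the only genuinely nontrivial point — is justifying the strict inequality $\rho(B')<\rho(B)$. This rests on the irreducibility (primitivity) of $B$, which holds because $A$ is a topologically mixing hyperbolic automorphism and $\tilde\Px$ is a Markov partition; for a primitive nonnegative integer matrix, removing any row-column pair strictly decreases the Perron eigenvalue, since the Perron eigenvector of $B$ has strictly positive entries and the removed state genuinely participates in some cycle. One should also take a little care that the exponential bound $S_n\le e^{hn}$ is required with constant $1$ (not merely $C e^{hn}$): this is harmless since one may absorb the multiplicative constant by slightly enlarging $h$, still keeping $h<h_{\text{top}}(A)$, and then noting $S_n\le e^{hn}$ for all $n$ after possibly adjusting finitely many small values — or simply invoke Remark \ref{estim0}, which notes that the arguments of \cite{PesSenZha16} go through with an arbitrary constant $C>0$ in the bound \eqref{s_n}. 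A secondary bookkeeping point is that orbit segments touching the boundary of the Markov partition must be excluded, but these form a set of zero leaf volume and do not contribute to the count of $s$-sets, exactly as already observed in the verification of Condition (Y2) above.
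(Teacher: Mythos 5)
Your proposal is correct and follows essentially the same route as the paper: bound $S_n$ by the number of symbolic first-return words of length $n$ (symbol $\tilde P$ only at the beginning and end) via the Markov coding, and show these grow with exponent strictly below $h_{\text{top}}(A)$. The only difference is that where the paper cites \cite{KatHas95} (Corollary 1.9.12 and Proposition 3.2.5) for this growth estimate, you supply the standard Perron--Frobenius argument directly (deleting the state $\tilde P$ strictly decreases the spectral radius of the irreducible transition matrix), and your handling of the multiplicative constant via Remark \ref{estim0} is fine.
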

\begin{proof}
It suffices to estimate the number of sets $\tilde{\Lambda}_i^s$ with a given 
$i$. This number is less than the number of periodic orbits of $A$ that originate in $\tilde{P}$ and have minimal period $\tilde{\tau}_i$. Using the symbolic representation of $A$ as a sub-shift of finite type induced by the Markov partition $\tilde{\mathcal{P}}$, one sees that the latter equals the number of symbolic words of length $\tilde{\tau}_i$ for which the symbol $\tilde{P}$ occurs only as the first and last symbol (but nowhere in between). The number of such words grows exponentially with exponent $h<h_{\text{top}}(A)$ (see \cite{KatHas95}, Corollary 1.9.12 and Proposition 3.2.5).
\end{proof}

\subsection{A tower representation for the Katok map} Applying the conjugacy map $H$, one obtains the element $P=H(\tilde{P})$ of the Markov partition 
$\mathcal{P}=H(\tilde{\mathcal{P}})$. Since the map $H$ is continuous, given $\varepsilon$, there is $\delta>0$ such that $\text{diam }(P)<\varepsilon$ for any $P\in\mathcal{P}$ provided $\text{diam }(\tilde{P})<\delta$. Further we obtain the set 
$\Lambda=H(\tilde{\Lambda})$, which has direct product structure given by the full length stable $\gamma^s(x)=H(\tilde{\gamma}^s(x))$ and unstable 
$\gamma^u(x)=H(\tilde{\gamma}^u(x))$ curves. We thus obtain a representation of the Katok map as a Young's diffeomorphism for which 
$\Lambda_i^s=H(\tilde{\Lambda}_i^s)$ are $s$-sets, 
$\Lambda_i^u=H(\tilde{\Lambda}_i^u)=G_{\T^2}^{\tau_i}(\Lambda_i^s)$ are 
$u$-sets and the inducing times $\tau_i=\tilde{\tau}_i$ are the first return time to $\Lambda$. Note that for all $x$ with $\tau(x)<\infty$ 
$$
\Lambda^s(x)=\bigcup_{y\in U^u(x)\setminus A^u(x)}\,\gamma^s(y),
$$ 
where $U^u(x)=H(\tilde{U}^u(x))\subseteq\gamma^u(x)$ is an interval containing $x$ and open in the induced topology of $\gamma^u(x)$, and 
$A^u(x)=H(\tilde{A}^u(x))\subset U^u(x)$ is the set of points which either lie on the boundary of the Markov partition or never return to the set $P$. Note that the one-dimensional Lebesgue measure in $\gamma^u(x)$ of $A^u(x)$ is zero. 

We further restrict the choice of the partition element $P$. Given $Q>0$, we can take the number $r_0$ in the construction of the Katok map so small and, by refining the Markov partition if necessary, we can choose a partition element $P$ such that 
\begin{equation}\label{partition}
G_{\T^2}^n(x)\notin D_{r_0} \text{ for any } 0\le n\le Q 
\end{equation}
and any point $x$ for which either $x\in P$ or $x\notin G_{\T^2}(D_{r_0})$ while 
$G_{\T^2}^{-1}(x)\in D_{r_0}$.

\begin{prop}
There exists $Q>0$ such that the collection of $s$-subsets $H(\Lambda_i^s)$ satisfies Conditions (Y1)--(Y5). 
\end{prop}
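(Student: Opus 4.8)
The plan is to verify Conditions (Y1)--(Y5) for the collection $\{H(\Lambda_i^s)\}$ by transporting the already-established tower structure for the automorphism $A$ through the conjugacy $H$ and then upgrading the qualitative properties to the quantitative hyperbolicity/distortion estimates required by Young's axioms, using the lemmas of Section~\ref{sec:additional-properties}. The choice of $Q$ (hence, via \eqref{partition}, of $r_0$ and of the Markov element $P$) is what makes the quantitative estimates work: we want trajectories that start in $P$ or enter $D_{r_0}$ to have already spent a long time outside $D_{r_0}$, so that the uniform hyperbolicity of $A$ accumulated during those $Q$ iterates dominates the (possibly weak) hyperbolicity picked up while traversing the slow-down domain $D_{r_0}$.

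First I would record that (Y1) and (Y2) are essentially formal: the set $\Lambda=H(\tilde\Lambda)$ inherits the hyperbolic product structure from $\tilde\Lambda$ because $H$ maps the stable and unstable foliations of $A$ to the continuous, uniformly transverse invariant foliations of $G_{\T^2}$ with smooth leaves (Statement~3 of Proposition~\ref{smoothH}), and transversality with angle bounded away from $0$ follows from the cone invariance of Lemma~\ref{cone_invar1} together with Lemma~\ref{tran-bound} controlling the time spent in $D_{r_0}\setminus D_{r_0/2}$. The Markov property and invariance in (Y1)(a)--(b) hold because $H\circ A = G_{\T^2}\circ H$ carries the Markov property of the $\tilde\Lambda_i^s$ verbatim to the $\Lambda_i^s$, and $\tau_i=\tilde\tau_i$ is still the first return time to $\Lambda$. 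Condition (Y2) passes through $H$ since $H$ and $H^{-1}$ are homeomorphisms mapping $\tilde\gamma^u$ to $\gamma^u$ and, as noted in the text, the exceptional set $A^u(x)$ (boundary points plus non-returning points) has zero leaf volume. Condition (Y5) is Kac's formula: the inducing time is the first return time to the base and hence integrable against the leaf volume on a chosen $\gamma^u$.

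The substantive work is (Y3) and (Y4), the uniform contraction/expansion and bounded distortion along orbits of the induced map $F=G_{\T^2}^{\tau_i}$ on $\Lambda_i^s$. For a point $x\in\Lambda_i^s$ its orbit segment of length $\tau_i$ either stays outside $D_{r_0}$, where $DG=A$ gives uniform hyperbolicity with rates $\lambda^{\pm1}$, or it passes through $D_{r_0}$; each passage is described by Lemma~\ref{s2estimate} (stay time and coordinate bounds) and Lemma~\ref{bad-delta} (contraction along stable curves and the factor $\sqrt{1+\mu^2}\,s_1(T)/s_2(0)$). The key point is that while traversing $D_{r_0}$ the map may contract/expand only weakly, but every such excursion is preceded by at least $Q$ iterates outside $D_{r_0}$ where we gained a factor $\lambda^{-Q}$ (resp. $\lambda^{Q}$); choosing $Q$ large makes the product over one return strictly contracting along $\gamma^s$ and strictly expanding along $\gamma^u$, giving a uniform $0<a<1$ as in (Y3)(a)--(b) and the expansion bound \eqref{expansion}. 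For (Y4), the Jacobian $\J F$ along $E^u$ is controlled using Lemma~\ref{bad-gamma1} for the angle distortion and Lemma~\ref{bad-delta} together with Lemma~\ref{var-eq1} for the distortion of the expansion rate; the backward-contraction estimates of Lemma~\ref{s2estimate} give the geometric factor $\kappa^n$ (resp. $\kappa^k$) in (Y4)(a)--(b) because points on the same stable curve converge and points on the same unstable curve, when their forward $F$-itineraries agree, have exponentially close orbit segments inside each excursion. I would assemble these into the telescoping sums $\sum_n c\kappa^n < \infty$ required by Young's distortion axioms.

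The main obstacle is precisely the distortion estimate (Y4) for excursions through $D_{r_0}$: inside the slow-down domain the expansion rate degenerates near the origin (it is only $1+O(\|s\|^{2\alpha})$), so the naive pointwise distortion over a single long excursion is \emph{not} bounded and one cannot simply sum $|\log \J F(F^n x)/\J F(F^n y)|$ term by term. The resolution — and the delicate part of the argument — is that the distortion accrued during one excursion is comparable to $|\Delta s(T)|/|s(T)|$, which by Lemma~\ref{bad-delta} is bounded in terms of $|\Delta s(0)|/|s(0)| = \|\Delta s(0)\|/s_2(0)$ at the \emph{entrance} to $D_{r_0}$, and this entrance separation has already been contracted by the $\lambda^{-Q}$ factor (for stable-curve comparisons) or is controlled by the matching forward itinerary (for unstable-curve comparisons). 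Thus the distortion over the $k$-th excursion is $O(\kappa^k)$ and the series converges; making this rigorous requires carefully tracking which lemma's hypotheses (in particular the smallness hypothesis (2) of Lemma~\ref{bad-delta}, $|\Delta s_2/s_2(0)| < (1-\mu)/72$, and the cone condition $|\Delta s_1|\le\mu\Delta s_2$) are met at each entrance, which is exactly where the choice of $Q$ in \eqref{partition} and the cone invariance of Lemma~\ref{cone_invar1} are used together. Finally, the arithmetic condition holds because the set of return times $\{\tau_i\}$ for a subshift of finite type containing a fixed point has gcd one, so Statement~3 of Proposition~\ref{geom_poten} applies.
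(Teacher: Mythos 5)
Your plan is correct and follows essentially the same route as the paper: (Y1), (Y2), (Y5) are transported through the conjugacy and Kac's formula, and (Y3)--(Y4) are obtained exactly as in the paper by splitting each return-time orbit into excursions through $D_{r_0}$ separated by at least $Q$ iterates of $A$, using Lemmas~\ref{tran-bound}, \ref{s2estimate}, \ref{bad-delta}, \ref{bad-gamma1} and \ref{var-eq1} so that the within-excursion distortion is controlled by the already-contracted relative separation at the entrance and the per-excursion contributions sum geometrically --- the mechanism the paper formalizes via Lemmas~\ref{general-BD} and \ref{summable}. (Your closing remark on the arithmetic condition is extraneous to (Y1)--(Y5) --- the paper verifies it separately in the proof of Theorem~\ref{Katok1} --- but harmless.)
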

\begin{proof}
Condition (Y1) follow from the corresponding properties for $A$ and the fact that $H$ is a topological conjugacy. Condition (Y2) holds since 
$x\in(\overline{\Lambda\setminus\cup\,\Lambda_i^s)\cap\gamma^u}$ implies that either $x$ lies on the boundary of the Markov partition or it never returns to the $\Int P$. Condition (Y5) follows from Kac's formula, since the inducing time is the first return time. We shall prove Conditions (Y3) and (Y4).

Using the conjugacy map $\phi$ from~\eqref{mapshi} one has 
\begin{equation}\label{eq:Fconjugacy}
F(x)=G_{\T^2}^{\tau(x)}(x)=\phi\circ G^{\tau(x)}\circ\phi^{-1}(x), \quad x\in\bigcup_i\Lambda_i^s\subset P.
\end{equation}
Since $\phi$ is smooth everywhere except the origin and $d\phi(x)$ is bounded from below and above on $P$, to establish Condition (Y3) it suffices to prove it for $G^{\tau(x)}$. 

For any $x\in P$ with $\tau(x)<\infty$ we define the finite collection of positive integers 
$\{n_\ell=n_\ell(x)\colon 0\le \ell\le k=k(x)\}$, called the \emph{itinerary} of $x$, as follows: 
\begin{equation}\label{itin}
0=n_0<n_1<\ldots<n_{2k}<n_{2k+1}=\tau(x)
\end{equation} 
and $G^j(x)\in D_{r_1}$\footnote{Recall that $r_1=(\log\lambda)r_0$ so that \eqref{eq:r1} holds.} if and only if $n_{2\ell-1}\le n<n_{2\ell}$ for some 
$1\le\ell\le k$. 

Given $x,y\in P$, denote $x_n=G^n(x)$ and $y_n=G^n(y)$. If $y$ lies on the stable curve through $x$, then $y_n$ lies on the stable curve through $x_n$. For 
$n_{2\ell}\le n<n_{2\ell+1}$, the latter lies in the stable cone for $A$ at $x_n$ and indeed, is an admissible manifold for $A$. It follows that the segment of the stable curve, which connects $x_n$ and $y_n$, expands uniformly under the linear map $A^{-n}$. Hence, due to the choice of the number $Q$, there exists $0<\gamma<1$ such that
\begin{equation}\label{lineardistance}
d(x_{n_{2l+1}}, y_{n_{2l+1}}) \le \gamma^{n_{2l+1}-n_{2l}} d(x_{n_{2l}}, y_{n_{2l}}) \le \gamma^Q d(x_{n_{2l}}, y_{n_{2l}}).
\end{equation}
	
We now turn to the case $n_{2l+1}\le n < n_{2l+2}-1$. Let 
$[m_l, m_{l+1}]\subset [n_{2l+1}, n_{2l+2}-1]$ be the largest interval (possibly empty) with 
$x_n\in D_{r_0/2}$ for all $n\in [m_l,m_{l+1}]$. By Lemma~\ref{tran-bound}, there exists a uniform $T_0>0$ such that $m_l-n_{2l+1}, n_{2l+2}-m_{l+1}<T_0$. Then there exists a constant $C>0$ such that 
$$ 
d(x_{m_l},y_{m_l}) \le C d(x_{n_{2l+1}}, y_{n_{2l+1}}), \quad d(x_{n_{2l+2}}, y_{n_{2l+2}}) \le C d(x_{m_{l+1}}, y_{m_{l+1}}). 
$$
Furthermore, let $s, \tilde{s}:[m_l, m_{l+1}]\to \R^2$ be the solutions of Equation \eqref{batata2} with initial conditions $s(0)=x_n$ and $\tilde{s}(0)=y_n$ respectively. We apply Lemma~\ref{bad-delta} to these orbits. The first two assumptions are satisfied, since $y_n$ is contained in the stable cone from $x_n$. The third assumption requires $d(x_{m_l}, y_{m_l})$ to be sufficiently small. In view of \eqref{lineardistance} this can be assured if we choose the number $r_0$ in the construction of the Katok map sufficiently small to ensure that $Q$ in \eqref{partition} is sufficiently large. Lemma~\ref{bad-delta} implies that 
\begin{equation}\label{itin1} 
d(x_{m_{l+1}}, y_{m_{l+1}})\le\sqrt{1+\mu^2}\ \frac{s_1(m_{l+1})}{s_2(m_l)} d(x_{m_l}, y_{m_l}). 
\end{equation}
Note that $\frac{s_1(m_{l+1})}{s_2(m_l)}$ is uniformly bounded, as both the numerator and denominator are of order $r_0$. This fact and the estimates \eqref{itin1} and \eqref{lineardistance} imply that there exists $0<\theta_1<1$, such that 
\begin{equation}\label{distance}
d(x_{n_{2l+2}}, y_{n_{2l+2}}) \le C^2 \gamma^Q  \frac{s_1(m_{l+1})}{s_2(m_l)} d(x_{n_{2l}}, y_{n_{2l}}) \le \theta_1 d(x_{n_{2l}}, y_{n_{2l}})
\end{equation} 
and the same holds for the odd indexes. It follows that 
$$ 
d(G^{\tau(x)}x, G^{\tau(x)}y) \le \theta_1^k d(x,y) 
$$
where $k$ is defined by \eqref{itin}. Condition (Y3a) follows. Condition (Y3b) can be proven similarly by considering the inverse map. 

We now prove Condition (Y4a) noting that Condition (Y4b) can be verified in a similar manner.
Using the relation~\eqref{eq:Fconjugacy} and the fact that the conjugacy map $\phi$ is smooth everywhere except at the origin, it suffices to prove the corresponding statement for $G^{\tau(x)}$. We need the following general statement. 
\begin{lem}\label{general-BD}
Let $\{A_n\}$ and $\{B_n\}$, $0\le n\le N$ be two collections of linear transformations of 
$\R^d$ and let $K=K(E,\theta)$ be the cone of angle $\theta$ around a subspace $E$. Assume that 
\begin{enumerate}
\item $A_nK\subset K$;
\item there are numbers $\gamma_n>0$ such that for every 
$v, w\in K$, $\|v\|=\|w\|=1$,
\[
\angle(A_nv,A_nw)\le\gamma_n\angle(v,w);
\]
\item there are numbers $d>0$ and $\delta_n>0$ such that for every 
$v\in K$
\[
\|A_nv-B_nv\|\le d\delta_n\|A_nv\|;
\]
\item there is $c>0$ such that for every $v\in K$
\[
\|A_nv\|\ge c\|v\|.
\]
\end{enumerate}
Then there is $C>0$, which is independent of the collections of linear transformations, such that for every $v,w\in K$,
\[
\left|\log\frac{\|\prod_{n=0}^N\,A_nv\|}{\|\prod_{n=0}^N\,B_nw\|}\right|
\le C\left(d\sum_{n=0}^N\,\delta_n+\angle(v,w)\sum_{n=0}^N\,\prod_{k=0}^n\gamma_k\right).
\]
\end{lem}
\begin{proof}[Proof of the lemma] Set $v^0=v$, $w^0=w$ and
\[
v^n=\prod_{k=0}^{n-1}\,A_kv, \quad w^n=\prod_{k=0}^{n-1}\,B_kw.
\]
For $a_n=\left|\log\frac{\|v^n\|}{\|w^n\|}\right|$ and $\tilde{v}^{n}=\frac{v^{n}}{\|v^{n}\|}$ and 
$\tilde{w}^{n}=\frac{w^{n}}{\|w^{n}\|}$ one has
\[
\begin{aligned}
a_n&=\left|\log\frac{\|A_nv^{n-1}\|}{\|B_nw^{n-1}\|}\right|
=\left|\log\left(\frac{\|A_nv^{n-1}\|}{\|A_nw^{n-1}\|}
\frac{\|A_nw^{n-1}\|}{\|B_nw^{n-1}\|}\right)\right|\\
&\le\left|\log\left(\frac{\|v^{n-1}\|}{\|w^{n-1}\|}
\frac{\|A_n\tilde{v}^{n-1}\|}{\|A_n\tilde{w}^{n-1}\|}\right)\right|
+\left|\log\left(\frac{\|A_nw^{n-1}\|}{\|B_nw^{n-1}\|}\right)\right|\\
&\le a_{n-1}+\left|\log\left(\frac{\|A_n\tilde{v}^{n-1}\|}
{\|A_n\tilde{w}^{n-1}\|}\right)\right|
+\left|\log\left(\frac{\|A_nw^{n-1}\|}{\|B_nw^{n-1}\|}\right)\right|.
\end{aligned}
\]
By the assumptions,
\[
\begin{aligned}
\left|\log\left(\frac{\|A_n\tilde{v}^{n-1}\|}
{\|A_n\tilde{w}^{n-1}\|}\right)\right|
&=\left|\log\left(1+\frac{\|A_n\tilde{v}^{n-1}\|-\|A_n\tilde{w}^{n-1}\|}{\|A_n\tilde{w}^{n-1}\|}\right)\right|\\
\le C'\angle(A_n&\tilde{v}^{n-1},A_n\tilde{w}^{n-1})=C'\gamma_n\angle(v^{n-1},w^{n-1})
\end{aligned}
\]
and 
\[
\begin{aligned}
\left|\log\frac{\|A_nw^{n-1}\|}{\|B_nw^{n-1}\|}\right|
&=\left|\log\frac{\|B_nw^{n-1}\|}{\|A_nw^{n-1}\|}\right| \\
&=\left|\log\left(1+\frac{\|B_nw^{n-1}\|-\|A_nw^{n-1}\|}{\|A_nw^{n-1}\|}\right)\right| \\
&\le C''\frac{\|A_nw^{n-1}-B_nw^{n-1}\|}{\|A_nw^{n-1}\|}\le C''d\delta_{n-1},
\end{aligned}
\]
where $C'>0$ and $C''>0$ are constants independent of the collections $A_n$ and $B_n$. This implies that 
\[
\begin{aligned}
a_N&\le C'\sum_{n=0}^{N-1}\angle(v^n,w^n)
+C''d\sum_{n=0}^{N-1}\delta_n\\
&\le C'\left(\sum_{n=0}^{N-1}\prod_{k=0}^n\,\gamma_k\right)\angle(v,w)+C''d\sum_{n=0}^{N-1}\delta_n.
\end{aligned}
\]
The desired result follows.
\end{proof}
We proceed with the proof of Condition (Y4a). We lift the map $G$ to $\mathbb{R}^2$ viewed as the universal cover of the torus and we endow $\mathbb{R}^2$ with the coordinate system $(s_1,s_2)$. Fix $x\in P$ with $N=\tau(x)<\infty$ and $y\in\gamma^s(x)$. Let $K^+=K^+(x)$ be the cone of angle $\arctan\mu$. By Lemma~\ref{cone_invar1}, $K^+$ is invariant under $G$. For $0\le n\le N$, we set $A_n=DG(G^n(x))$ and 
$B_n=DG(G^n(y))$ and we further define $\gamma_n$ by \eqref{bad-gamma}, $\delta_n$ by 
\begin{equation}\label{good-delta-n}
\delta_n=\frac{1}{d(x,y)}\max_{v\in K^+\setminus\{0\}}\frac{\|A_nv-B_nv\|}{\|A_nv\|}
\end{equation}
and $d=d(x,y)$.

\begin{lem}\label{summable}
The maps $A_n$, $B_n$ and the cone $K^+$ satisfy the conditions of Lemma~\ref{general-BD}. Furthermore, there exist constants $\tilde{C}>0$ and $0<\theta_2 <1$ independent of the choice of $x$ such that $\delta_n$ and $\gamma_n$ satisfy
\begin{equation}\label{eq:summable}
\sum_{n=0}^{\tau(x)-1} \delta_n <\tilde{C}, \quad \sum_{n=0}^{\tau(x)-1}\prod_{j=0}^n\gamma_j < \tilde{C}, \quad \prod_{n=0}^{\tau(x)-1}\gamma_n <\theta_2.
\end{equation}
\end{lem}
We first show how to derive Property (Y4a) from Lemma~\ref{summable}. For 
$y\in\gamma^s(x)$ and two vectors $v\in K^+(x)$ and $w\in K^+(y)$ Lemmas 
\ref{general-BD} and ~\ref{summable} yield
$$
\begin{aligned} 
\left|\log\frac{\|DG^{\tau(x)}(x)v\|}{\|DG^{\tau(x)}(y)w\|}\right|
&=\left|\log\frac{\|\prod_{n=0}^{\tau(x)-1}\,A_nv\|}{\|\prod_{n=0}^{\tau(x)-1}\,B_nw\|}\right|\\
&\le C(C' d(x,y)+C''\angle (v,w)). 
\end{aligned} 
$$
Furthermore,
\begin{equation}\label{eq:angleratio}
\frac{\angle (DG^{\tau(x)}(x)v, DG^{\tau(x)}(y)w)}{\angle(v,w)} \le \prod_{n=0}^{\tau(x)-1}\gamma_n < \theta_2. 
\end{equation}
Assume that $v^n\in E^u((G^{\tau(x)})^n(x))$ and $w^n\in E^u((G^{\tau(x)})^n(y))$. Then there exists $v\in E^u(x), w\in E^u (y)$ such that $v^n=D(G^{\tau(x)})^n(x)v$ and $w^n=D(G^{\tau(x)})^n(y)w$. Using \eqref{eq:angleratio} and Property (Y3), we have
$$
\begin{aligned}
\left|\log\frac{\|DG^{\tau(x)}((G^{\tau(x)})^n(x))v^n\|}{\|DG^{\tau(x)}((G^{\tau(x)})^n(y))w^n\|}\right| 
&\le CC' d((G^{\tau(x)})^n (x),(G^{\tau(x)})^n(y))\\
&+CC''\angle (D(G^{\tau(x)})^n(x)v,D(G^{\tau(x)})^n(y)w) \\
&\le cCC' \theta_1^n d(x,y) + CC'' \theta_2^n \angle(v,w).
\end{aligned}
$$
By the relation~\eqref{eq:Fconjugacy}, the same ratio for the induced map $F$ differs only by the differential of the conjugacy map $\phi$. Since $x,y\notin D_{r_0}$ and $\phi$ is smooth everywhere except the origin, the above inequality also hold for $F$. Observe that $0<\theta_1,\theta_2<1$ and Property (Y4a) follows. This completes the proof of the theorem modulo Lemma~\ref{summable}. 
\end{proof}

\begin{proof}[Proof of Lemma~\ref{summable}]

In this proof, $C$ refers to an unspecified positive constant that may depend on the H\"older exponent $\alpha$ of the function $\psi$ and constants $\lambda$ and $r_0$, but not on the choice of $x$ (hence, not on $\tau(x)$). We will also use the phrase ``uniformly bounded'' if an expression can be bounded by such a constant. 

We have already shown the invariance of the cone $K^+$ and hence, the first requirement in Lemma~\ref{general-BD} is satisfied. The second and third requirements hold by the choice of $\delta_n$ and $\gamma_n$ (see \eqref{bad-gamma} and \eqref{good-delta-n}). Condition (4) follows from the definition of $A_n$ and the fact that $G$ is a diffeomorphism, and hence, $\|DG(x)\|$ is bounded below by a constant. 

We now proceed with the proof of \eqref{eq:summable}.

\vskip .1in
\noindent\textit{{\bf Part 1}: Estimating $\delta_n$.} 
Denote $x_n=G^n(x)$, $y_n=G^n(y)$. Since $y_n\in \gamma^s(x_n)$, the vector $y_n-x_n$ is contained in the stable cone $K^-$. Due to symmetry, it suffices to consider $x_n$ and $y_n$ in the first quadrant, and assume that $s_2$ component of $y_n$ is larger than the $s_2$ component of $x_n$, i.e., that $\Delta s_2>0$ (interchange $x_n$ and $y_n$ otherwise).

Let $n_0,\dots, n_k$ be the itinerary of $x$, and consider $n_{2l}\le n\le n_{2l+1}-1$. We have that $x_n\notin D_{r_0}$. In this case $A_n=B_n=A$ are constant matrices and hence, $\delta_n =0$. 

We now consider $n\in [n_{2l+1}, n_{2l+2}-1]$ and let 
\begin{equation}\label{eq:D}
D(s_1,s_2) = \log \lambda \begin{bmatrix}
\psi + 2s_1^2 \psi' & 2s_1s_2 \psi' \\
-2s_1s_2 \psi' & \psi + 2s_2^2 \psi'
\end{bmatrix},
\end{equation}
be the coefficient matrix of the variational equation \eqref{batata2}. Let also 
$s(t),\tilde{s}(t):[n,n+1]\to \R^2$ be solutions of \eqref{batata2} with initial conditions $s(n)=x_n$, $\tilde{s}(n)=y_n$. Finally, let $A_n(t), B_n(t)$ be $2\times 2$-matrices solving the variational equations
$$
\frac{dA_n(t)}{dt} = D(s(n+t)) A_n(t), \quad \frac{dB_n(t)}{dt}= D(\tilde{s}(n+t))B_n(t)
$$
with initial conditions $A_n(0)=B_n(0)= Id$. We have 
$$
A_n = dG(x_n) = A_n(1), \quad B_n =dG(y_n) = B_n(1).
$$
Since
$$ 
\begin{aligned}
\frac{dA_n(t)}{dt}-\frac{dB_n(t)}{dt}
&=\bigl(  D(s(n+t))-D(\tilde{s}(n+t))\bigr)A_n(t) \\
&\hspace{0.5cm}+ D(\tilde{s}(n+t))\bigl(A_n(t) -B_n(t)\bigr), 
\end{aligned}
$$
we obtain
$$
A_n(t) -B_n(t) = A_n(t)\int_0^t A_n^{-1}(\tau) \bigl( D(s(n+\tau))-D(\tilde{s}(n+\tau))  \bigr) A_n(\tau) d\tau.
$$
We have $\|D(s) -D(\tilde{s})\| \le \|\partial D(\xi)\|\cdot\| \Delta s\|$, where $\Delta s = \tilde{s}-s$ and $\xi=(\xi_1,\xi_2)$ with $\xi_i\in [s_i,\tilde{s}_i]$ for $i=1,2$. By Lemma~\ref{var-eq1},
\begin{equation}\label{AnBn}
\begin{aligned}
\|A_n - B_n\|&\le \|A_n(1)\| \\ 
& \times\sup_{0 \le \tau \le 1} \Big[\|A_n^{-1}(\tau)\|\|A_n(\tau)\| D(s(n+\tau))-D(\tilde{s}(n+\tau)) \| \Big]\\  
&\le C \sup_{0 \le \tau\le 1} \Big[(\xi_1^2+\xi_2^2)^{\alpha-\frac12}(n+\tau) \|\Delta s(n+\tau)\|\Big].
\end{aligned}
\end{equation}
By assumption (4) of Lemma~\ref{general-BD},
$$
\delta_n \le \frac{1}{cd(x,y)}\|A_n-B_n\| = \frac1c \frac{d(x_{n_{2l+1}},y_{n_{2l+1}})}{d(x,y)} 
\frac{\|A_n-B_n\|}{d(x_{n_{2l+1}}, y_{n_{2l+1}})}.  
$$
Again, recall that $[m_l, m_{l+1}] \subset [n_{2l+1}, n_{2l+2}-1]$ is the largest (possibly empty) interval such that $x_n \in D_{r_0/2}$ for all $n\in [m_l, m_{l+1}]$ and $[m_l, T_l]$ is the largest time interval on which $s_1(t)\le s_2(t)$. If such a $m_l$ does not exist then $x_n\in D_{r_0}\setminus D_{r_0/2}$ for all $n\in [n_{2l+1}, n_{2l+2}-1]$ and Lemma \ref{tran-bound} implies that $n_{2l+2}-n_{2l+1}$ is uniformly bounded.
We now claim that 
\begin{equation}\label{def:mathcalD}
\mathcal{D}_l:=\sum_{n=n_{2l+1}}^{n_{2l+2}}\frac{\|A_n-B_n\|}{d(x_{n_{2l+1}}, y_{n_{2l+1}})}\le C,
\end{equation}
where $C$ is some constant independent of $l$. This implies the summability of $\delta_n$ since, by~\eqref{distance}, we have 
\begin{multline*}
\sum_{n=0}^{\tau(x)-1}\delta_n=\sum_{l=1}^k \sum_{n=n_{2l+1}}^{n_{2l+2}} \delta_n  \\ 
\le \sum_{l=1}^k\frac1c \frac{d(x_{n_{2l+1}},y_{n_{2l+1}})}{d(x,y)}\sum_{n=n_{2l+1}}^{n_{2l+2}}\frac{\|A_n-B_n\|}{d(x_{n_{2l+1}}, y_{n_{2l+1}})} \le C\frac1c \sum_{l=1}^k \alpha_1^l. 
\end{multline*}
We now prove the estimate~\eqref{def:mathcalD}. We have that 
$$
\mathcal{D}_l= \left( \sum_{n=n_{2l+1}}^{m_l-1} + \sum_{n=m_l}^{T_l-1} + \sum_{n=T_l}^{m_{l+1}-1} + \sum_{n=m_{l+1}}^{n_{2l+2}}\right)  \frac{\|A_n-B_n\|}{d(x_{n_{2l+1}}, y_{n_{2l+1}})} 
$$
and we shall show that each of the four sums is uniformly bounded. To this end, observe that by Lemma~\ref{tran-bound}, $m_l-n_{2l+1}\le T_0$ and $n_{2l+2}-m_{l+1} \le T_0$, and hence, each sum $\sum_{n=n_{2l+1}}^{m_l-1}$ and $ \sum_{n=m_l}^{n_{2l+2}}$ involves at most $T_0$ terms of uniformly bounded quantity, and hence, is itself uniformly bounded. 

Observe that since $\tilde{s}$ is contained in the stable cone at $s$, we always have
\begin{equation}\label{eq:Delta}
|\Delta s_1|<\mu\Delta s_2\le\Delta s_2.
\end{equation} 

{\bf Case 1: $n\in [m_l, T_l]$.} We have $s_1 \le s_2$. To apply Lemma~\ref{bad-delta} to the time interval $[m_l, n]$ we need to ensure that 
$\frac{\Delta s_2(m_1)}{s_2(m_1)}<\frac{1-\mu}{72}$. This can be guaranteed by choosing the number $r_0$ in the construction of the Katok map sufficiently small so that the number $Q$ in \eqref{partition} is sufficiently large. Using \eqref{eq:Delta} and Lemma~\ref{bad-delta} for $n\le T_l-1$ and $0\le\tau\le 1$, we have 
\begin{equation}
\label{eq:delta-control}
\begin{aligned} 
\|\Delta s(n+\tau)\|&\le 2\Delta s_2(n+\tau)\\
&\le 2\Delta s_2(m_l)\frac{s_2(n+\tau)}{s_2(m_l)}(1+C_1 s_2^{2\alpha}(m_l) (n+\tau-m_l))^{- \beta}.
\end{aligned}
\end{equation}
As in the proof of Lemma ~\ref{bad-delta} we have that 
$$
s_2^2(t)\le \xi_1^2(t)+\xi_2^2(t)\le 2(1+\kappa)^2s_2^2(t)
\le Cs_2^2(t)
$$
and hence,
$$
(\xi_1^2(n+\tau)+\xi_2^2(n+\tau))^{\alpha-\frac12}\le C \,s_2^{2\alpha -1}(n+\tau).
$$
Now \eqref{AnBn} and \eqref{eq:delta-control} and the fact that $|\Delta s_2|\le \|\Delta s\|$ yield
\begin{multline*}
\|A_n-B_n\|\le\\
\le C \frac{\|\Delta s(m_l)\|}{s_2(m_l)}
\sup_{0\le\tau\le 1}\Big[ 
s_2^{2\alpha}(n+\tau)
\big(1+C_1s_2^{2\alpha}(m_l) (n+\tau-m_l)\big)^{- \beta} \Big].
\end{multline*}
Applying Lemma~\ref{s2estimate} on the time interval $[m_l, n+1]$ we get
\begin{multline*}
\|A_n-B_n\| \le\\
 C \frac{\|\Delta s(m_l)\|}{s_2(m_l)}
\sup_{0\le\tau\le 1}\Big[s_2(m_l)^{2\alpha} (1+C_1s_2^{2\alpha}(m_l)(n+\tau-m_l))^{-1 - \beta}\Big]\\
\le C \|\Delta s(m_l)\| s_2(m_l)^{2\alpha-1} (1+C s_2^{2\alpha}(m_l)(n-m_l))^{-1 - \beta}.
\end{multline*}
Note that  $s_2(m_l)$ is bounded from above and below by a multiple of $r_0$ ($m_l$ is the first time larger than $n_{2l+2}$ that the orbit enters $D_{r_0/2}$), and 
$|\Delta s(m_l)|=d(x_{m_l}, y_{m_l})$. Moreover, since $n_{2l+1}-m_l$ is uniformly bounded in $l$, the ratio 
$\frac{d(x_{m_l}, y_{m_l})}{d(x_{n_{2l+1}}, y_{n_{2l+1}})}$ is uniformly bounded in $l$.
  
We conclude that 
$$
\frac{\|A_n-B_n\|}{d(x_{n_{2l+1}}, y_{n_{2l+1}})} \le C (1+C_1s_2^{2\alpha}(m_l)(n-m_l))^{-1 - \beta}
$$
and hence,
$$
\sum_{n=m_l}^{T_l-1}\frac{\|A_n-B_n\|}{d(x_{n_{2l+1}}, y_{n_{2l+1}})} \le \sum_{n=m_l}^{T_l-1} C (1+C_1s_2^{2\alpha}(m_l)(n-m_l))^{-1 - \beta}
$$ 
is uniformly bounded in $l$. 

{\bf Case 2: $n\in [T_l, m_{l+1}]$.} We have $s_1\ge s_2$. Due to symmetry of the system, we have $T_l \ge (m_{l+1}-m_l-2)/2$ (the additional $2$ coming from possible round-off to an integer). 

By \eqref{eq:chi-estimate}, \eqref{eq:Delta} and Lemma~\ref{bad-delta} for all $T_l\le t\le m_{l+1}$  and $0\le \tau\le 1$ we have
$$
\begin{aligned}
\|\Delta s(n+\tau)\|& \le 2\Delta s_2(n+\tau)\\
&\le 2\frac{\Delta s_2(T_l)}{s_1(T_l)}s_1(n+\tau)\left(1-C_12^\alpha s_1^{2\alpha}(T_l)\,(n+\tau-T_l)\right)^{-\beta}.
\end{aligned}
$$
For $i=1,2$ and $\min\{s_i, \tilde{s}_i\}\le \xi_i\le \max\{s_i, \tilde{s}_i\}$ and $\Delta s_i=\tilde{s_i}-s_i$ we have
$$s_i-|\Delta s_i|\le \xi_i\le s_i+|\Delta s_i|.$$
From this, \eqref{eq:chi-estimate} and \eqref{eq:Delta} it follows that
$$
\xi_1^2 + \xi_2^2\ge \xi_1^2\ge (s_1 - |\Delta s_1|)^2\ge (s_1-|\Delta s_2|)^2\ge s_1^2\left(1-\frac{|\Delta s_2|}{s_1}\right)^2 \ge C^{-1}s_1^2
$$
and
$$
s_1^2\le\xi_1^2 + \xi_2^2\le
2 (s_1+|\Delta s_2|)^2\le s_1^2\left(1+\frac{|\Delta s_1|}{s_1}\right)^2 \le Cs_1^2.
$$
Hence, for all $0<\alpha<1$ we obtain
$$
(\xi_1^2(n+\tau)+\xi_2^2(n+\tau))^{\alpha-\frac12}\le C s_1^{2\alpha -1}(n+\tau).
$$
Applying Lemma~\ref{s2estimate} on the time interval $[m_l, n+1]$ and using \eqref{AnBn} and the above estimates, we get 
$$
\begin{aligned}
\|A_n - B_n\|
&\le C \sup_{0\le\tau\le 1}\Big[s_1^{2\alpha-1}(n+\tau)\|\Delta s(n+\tau)\|\Big]\\
&\le 2C \frac{\Delta s_2(T_l)}{s_1(T_l)}\sup_{0\le\tau\le 1}\Big[s_1^{2\alpha}(n+\tau)
(1-C_12^\alpha s_1^{2\alpha}(T_l)\,(n+\tau-T_l))^{-\beta} \Big].
\end{aligned} 
$$
By \eqref{eq:chi-estimate}, we get
$$
\|A_n-B_n\| \le 
C \frac{|\Delta s_2 (T_l)|}{s_1(T_l)}\,\sup_{0\le\tau\le 1}\Big[s_1^{2\alpha}(T_l)
\Big(1-C_12^\alpha s_1^{2\alpha}(T_l)\,(n+\tau-T_l)\Big)^{-\beta-1}\Big].
$$
Since $s_1(T_l)=s_2(T_l)$, it follows from Lemma~\ref{bad-delta} that 
$$
\|A_n-B_n\| \le 
C \frac{|\Delta s_2 (m_l)|}{s_2(m_l)}\,s_1^{2\alpha}(T_l)
\Big(1-C_12^\alpha s_1^{2\alpha}(T_l)\,(n+1-T_l)\Big)^{-\beta-1}.
$$
Since $\frac{r_0}{4}\le s_2(m_l)\le r_0$ and both $|\Delta s_2 (m_l)|$ and 
$s_1^{2\alpha}(T_l)$ are uniformly bounded, the arguments similar to those in Case 1 yield
$$
\frac{\|A_n-B_n\|}{d(x_{n_{2l+1}}, y_{n_{2l+1}})}\le
C \Big(1-C_12^\alpha s_1^{2\alpha}(T_l)\,(n-T_l)\Big)^{-\beta-1}
$$
and we obtain
$$
\sum_{n=T_l}^{m_{l+1}} \frac{\|A_n - B_n\|}{ d(x_{n_{2l+1}}, y_{n_{2l+1}})}\le C.
$$ 
This completes the proof of the summability for $\delta_n$.

\vskip .1in
\noindent\textit{{\bf Part 2}: Estimating $\gamma_n$}. Observe that for all 
$n\in [n_{2l}, n_{2l+1}-1]$, the linear map contracts angles uniformly, and hence, $\gamma_n <\gamma <1$. For $n\in [m_l, m_{l+1}]$, where 
$x_n\in D_{r_0/2}$, Lemma~\ref{bad-gamma1} applies and yields 
$$ 
\prod_{n=m_l}^{m_{l+1}}\gamma_n \le (1 + C(m_{l+1}-m_l))^{-\frac{1}{\alpha}}
$$
for some constant $C>0$.
Since $[m_l, m_{l+1}]$ differs from $[n_{2l+1},n_{2l+2}]$ by a finite set, 
$$
\prod_{n=n_{2l+1}}^{n_{2l+2}-1}\gamma_n \le C'(1 + C(n_{2l+2}-n_{2l+1}))^{-\frac{1}{\alpha}}
$$
for some constant $C'>0$. In particular, we have 
$$ 
\prod_{n=n_{2l}}^{n_{2l+2}-1}\gamma_n \le C'\gamma^{n_{2l+1}-n_{2l}} < \theta_3 
$$
for some constant $0<\theta_3<1$, which implies the last estimate of Lemma~\ref{summable} with $\theta_2=\theta_3^l$. Moreover, 
\begin{multline*}
\sum_{n=0}^{\tau(x)} \prod_{j=0}^{n}\gamma_j =  \sum_{l=0}^{k(x)} \sum_{n=n_{2l}}^{n_{2l+2}-1}\prod_{j=0}^{n}\gamma_j 
\le\sum_{l=0}^{k(x)} \left(\prod_{j=0}^{n_{2l}-1}\gamma_j \sum_{n=n_{2l}}^{n_{2l+2}-1}\prod_{j=n_{2l}}^{n}\gamma_j \right) \\
\le\sum_{l=0}^{k(x)} \left(\theta_3^l \left( \sum_{n=n_{2l}}^{n_{2l+1}-1}\prod_{j=n_{2l}}^{n}\gamma_j + \prod_{j=n_{2l}}^{n_{2l+1}-1}\gamma_j\sum_{n=n_{2l+1}}^{n_{2l+2}-1}\prod_{j=n_{2l+1}}^{n}\gamma_j \right)  \right) \\
\le\sum_{l=0}^{k(x)} \left(\theta_3^l \left( \sum_{n=n_{2l}}^{n_{2l+1}-1}\gamma^{n-n_{2l}} + \theta_3\sum_{n=n_{2l+1}}^{n_{2l+2}-1}(1+C_1(n_{2l+2}-n)^{-\frac{1}{\alpha}} \right)  \right) .
\end{multline*}  
In the last line of the above formula, each of the two sums in the inner parenthesis is uniformly bounded and hence, for some $C''>0$
$$	
\sum_{n=0}^{\tau(x)-1}\prod_{j=0}^{n}\gamma_j\le C''\sum_{l=0}^{k(x)}\theta_3^l 
$$
is also uniformly bounded. This proves summability of $\gamma_n$ and completes the proof of Lemma~\ref{summable}.
\end{proof}

\section{Proof of Theorem~\ref{Katok1}}
\label{sec:proof}

Since the conjugacy map $H$ preserves topological and combinatorial information about $A$ (e.g., its topological entropy), the number $S_n$ of partition elements with inducing time $\tau_i=n$ for $G_{\T^2}$ and $A$ is the same and hence, by Lemma \ref{h1}, $S_n\le e^{hn}$ where 
$h<h_{\text{top}}(f)$. Observe that $h<h(\mu_1)$, where $\mu_1=m$ is the area. Indeed, this is the case for the automorphism $A$, and by Statement 4 of Proposition~\ref{smoothH}, the same holds for $G_{\T^2}$ provided $r_0$ is  sufficiently small. Proposition~\ref{geom_poten} and the fact that the inducing time is the first return time imply the existence of $t_0=t_0(P)<0$ such that for every $t_0<t<1$ there exists a unique equilibrium measure $\mu_t$ associated to the geometric $t$-potential $\varphi_t$ among all measures $\mu$ for which $\mu(P)>0$. The measure $\mu_t$ is ergodic, has exponential decay of correlations and satisfies the CLT with respect to a class of functions which includes all H\"older continuous functions on the torus. Note that $\mu_t(U)>0$ for every open set $U\subset P$.

Since the linear map $A$ has the Bernoulli property, every power of $A$ is ergodic. This implies that the tower for $A$ satisfies the arithmetic condition and hence, this is true for the tower for the Katok map $G_{\T^2}$. 

To prove the requirement \eqref{expansion} note that if $x,y\in\Lambda_i^s$ and 
$y\in\gamma^s(x)$, the distance $d(f^j(x), f^j(y))$ is decreasing with $j$ (see \eqref{lineardistance} and \eqref{distance}) and if $y\in\gamma^u(x)$, the distance 
$d(f^j(x), f^j(y))$ is increasing with $j$ reaching its maximum $\le\text{diam }P$ at 
$j=\tau(x)$. 

We now show that $t_0$ tends to $-\infty$ as $r_0$ approaches $0$. To this end, we use the formula \eqref{number-t0} and prove that the number $\log\lambda_1$ given by \eqref{eq:lambda1} can be chosen arbitrarily close to $h_{\mu_1}(f)$ for sufficiently small $r_0$. 

To this end let us fix $\varepsilon>0$ and chose a point $x\in\Lambda^s_i$. 
Note that $\phi=Id$ outside $D_{r_0}$ and does not change the stable and unstable directions at points $x\not\in D_{r_0}$. In view of relation~\eqref{eq:Fconjugacy} it suffices to estimate $\log\lambda_1$ working with the map $G$ instead of the Katok map $G_{\T^2}$. We can assume that $x$ is a generic point for the area $m$, which coincides with the measure $\mu_1$. We write 
$$
\tau_i=\sum_{j=1}^s n_j,
$$
where the numbers $n_j$ are chosen in the following way: 1) the number $n_1$ is the first moment when $G^{n_1}(x)\in D_{r_0}\setminus D_{\frac{r_0}{8}}$; 2) the number $n_1+n_2$ is the first moment when $G^{n_1+n_2}(x)\in D_{\frac{r_0}{8}}$; 3) the number $n_1+n_2+n_3$ is the first moment when 
$G^{n_1+n_2+n_3}(x)\in D_{r_0}\setminus D_{\frac{r_0}{8}}$; 4) the number $n_1+n_2+n_3+n_4$ is the first moment when 
$G^{n_1+n_2+n_3+n_4}(x)\notin D_{r_0}$; and continue in the same fashion.\footnote{Of course, some of the numbers $n_j$ can be zero but this does not affect the argument.} Note that $n_1\ge Q$ where the number $Q$ is given by \eqref{partition}. If $r_0$ is sufficiently small, then $Q$ becomes large enough to ensure that 
\begin{equation}\label{estim1}
\log |\J G^{n_1}(x)|\le n_1(\log\lambda +\varepsilon).
\end{equation}
By Equation~\eqref{eq:D}, for $x\in D_{r_0}\setminus D_{\frac{r_0}{8}}$ we have 
$\log|\J G(x)|\le \log M$ for some constant $M$ independent of $r_0$ and hence, 
\begin{equation}\label{estim2}
\log |\J G^{n_2}(x)|\le n_2\log M.
\end{equation}
For $x\in D_{\frac{r_0}{8}}$, we have that $\psi(u)=\left(\frac{u}{r_0}\right)^{\alpha}$  and $\psi'(u)=\frac{\alpha}{r_0}\left(\frac{u}{r_0}\right)^{\alpha-1}$ and by Equation~\eqref{eq:D}, we have $\log|\J G(x)|\le\log\lambda$. This implies that
\begin{equation}\label{estim3}
\log |\J G^{n_3}(x)|\le n_3\log\lambda.
\end{equation}
Finally, as in \eqref{estim2},
\begin{equation}\label{estim4}
\log |\J G^{n_4}(x)|\le n_4 \log M.
\end{equation}
Similar estimates hold for other $n_j$. It is easy to see that
\begin{equation}\label{estim5}
\log |\J F(x)|\le\sum_{j=1}^s\log |\J f^{n_1+\cdots +n_j}(f^{n_1+\cdots +n_{j-1}}(x))|.
\end{equation}
As in Lemma~\ref{tran-bound}, the maximal number of subsequent iterates (under $G$) any orbit spends in $D_{r_0}\setminus D_{\frac{r_0}{8}}$ is bounded from above by a constant $T'_0$ which does not depend on $r_0$. It follows from \eqref{estim1}-\eqref{estim5} that
$$
\log\lambda_1\le\log\lambda+\varepsilon+\frac{2T'_0\log M}{Q}\le\log\lambda+2\varepsilon
$$

On the other hand, by Statement 4 of Proposition~\ref{smoothH}, we can choose $r_0$ so small that 
$$
\log\lambda+\varepsilon\ge h_m(G_{\T^2})\ge\log\lambda-\varepsilon
$$ 
By Remark \ref{estim0}, we have that 
$$
\log\lambda+2\varepsilon\ge\log\lambda_1\ge h_m(G_{\T^2})\ge\log\lambda-\varepsilon
$$
and hence, the difference $\log\lambda_1-h_m(G_{\T^2})$ can be made arbitrary small if $r_0$ is sufficiently small. 

Let us choose another element $\tilde P$ of the Markov partition which satisfies Condition~\eqref{partition}. Repeating the above argument there exists $\tilde{t}_0=t_0(\tilde{P})<0$ such that for every $\tilde{t}_0<t<1$ there exists a unique equilibrium measure $\tilde{\mu}_t$ associated to the geometric $t$-potential among all measures $\mu$ for which $\mu(\tilde{P})>0$, and $\tilde{\mu}_t(\tilde{U})>0$ for every open set $\tilde{U}\subset\tilde{P}$. Since the map $f$ is topologically transitive, for every open sets $U\subset P$ and $\tilde{U}\subset\tilde{P}$ there exists an integer $k$ such that 
$G_{\T^2}^k(U)\cap\tilde{U}\ne\emptyset$. Therefore, $\mu_t=\tilde{\mu}_t$. Note that if $r_0$ is sufficiently small, the union of partition elements that satisfy Condition~\eqref{partition} form a closed set $Z$ whose complement is a neighborhood of zero. The only measure which does not charge any element of the Markov partition, lying outside this neighborhood, is the Dirac measure $\delta_0$ at the origin. Clearly, $P(\delta_0)=0$. Set 
$$
t_0:=\max_{P\in\mathcal{P}, P\cap Z\ne\emptyset}t_0(P).
$$
It follows from what was said above that $t_0\to-\infty$ as $r_0$ approaches $0$. The first statement of the theorem now follows by observing that $P(\mu_t)>0$ for every $t_0<t<1$.

To prove the third statement of the theorem fix $t>1$ and choose an ergodic measure $\mu$ for $G_{\T^2}$. Observe that the positive Lyapunov exponent of $\mu$ is equal to 
$\int\,\log |J^u G_{\mathbb{T}^2}(x)|
\,d\mu(x)$. Using now the Margulis-Ruelle inequality for the entropy and Statement (2) of Proposition \ref{smoothH}, we find that for $t>1$
$$
h_\mu(f)\le\int_M\,\log|J^uG_{\T^2}(x)|\,d\mu(x)< t\int_M\,\log|J^uG_{\T^2}(x)|\,d\mu(x).
$$
It follows that 
$$
\begin{aligned}
h_\mu(f)&-t\int_M\,\log|J^uG_{\T^2}(x)|\,d\mu(x)<0\\
&=h_\mu(\delta_0)-t\int_M\,\log|J^uG_{\T^2}(x)|\,d\delta_0(x)
\end{aligned}
$$
and hence, the Dirac measure $\delta_0$ is the unique equilibrium measure for $\varphi_t$.

To prove the second statement of the theorem let $\mu$ be the equilibrium measure for $\varphi_1$. Then either $\mu$ has zero Lyapunov exponents, in which case it is the Dirac measure at the origin, or it has positive Lyapunov exponents, in which case by the entropy formula, it must be the area $m$. 

\bibliographystyle{alpha}
\bibliography{BiblioSenti2017}

\begin{thebibliography}{ADU93}

\bibitem[ADU93]{AarDenUrb93}
Jon Aaronson, Manfred Denker, and Mariusz Urba{\'n}ski.
\newblock Ergodic theory for {M}arkov fibred systems and parabolic rational
  maps.
\newblock {\em Trans. Amer. Math. Soc.}, 337(2):495--548, 1993.

\bibitem[Bar13]{Bar13}
Eduardo~Pinheiro Barbosa.
\newblock Formalismo termodinâmico e estabilidade estatística no
  desdobramento de tangência homoclínica.
\newblock {\em Phd Thesis, IM-UFRJ, {\sf
  http://www.pgmat.im.ufrj.br/index.php/pt-br/teses-e-dissertacoes/teses/2013-1/33--18/file}},
  2013.

\bibitem[BP13]{BarPes13}
Luis Barreira and Yakov~B. Pesin.
\newblock {\em Introduction to Smooth Ergodic Theory}, volume 148 of {\em
  Graduate Studies in Mathematics}.
\newblock American Mathematical Society, Providence, RI, 2013.

\bibitem[CP16]{ClPes16}
V.~Climenhaga and Ya. Pesin.
\newblock Building thermodynamics for non-uniformly hyperbolic maps.
\newblock {\em Arnold Mathematical Journal}, v. 3(1):37–82, 2017 2016.

\bibitem[Hu04]{hH04}
Huyi Hu.
\newblock Decay of correlations for piecewise smooth maps with indifferent
  fixed points.
\newblock {\em Ergodic Theory Dynam. Systems}, 24(2):495--524, 2004.

\bibitem[Kat79]{Kat79}
A.~Katok.
\newblock Bernoulli diffeomorphisms on surfaces.
\newblock {\em Ann. of Math. (2)}, 110(3):529--547, 1979.

\bibitem[KH95]{KatHas95}
A.~{Katok} and B.~{Hasselblatt}.
\newblock {\em {Introduction to the Modern Theory of Dynamical Systems}}.
\newblock Cambridge University Press, New York, 1995.

\bibitem[Lop93]{aL93}
A.~O. Lopes.
\newblock The zeta function, nondifferentiability of pressure, and the critical
  exponent of transition.
\newblock {\em Adv. Math.}, 101(2):133--165, 1993.

\bibitem[LR06]{LepRio06}
Renaud Leplaideur and Isabel Rios.
\newblock Invariant manifolds and equilibrium states for non-uniformly
  hyperbolic horseshoes.
\newblock {\em Nonlinearity}, 19(11):2667--2694, 2006.

\bibitem[LSV99]{LSV99}
Carlangelo Liverani, Beno{\^{\i}}t Saussol, and Sandro Vaienti.
\newblock A probabilistic approach to intermittency.
\newblock {\em Ergodic Theory Dynam. Systems}, 19(3):671--685, 1999.

\bibitem[PM80]{ManPom80}
Yves Pomeau and Paul Manneville.
\newblock Intermittent transition to turbulence in dissipative dynamical
  systems.
\newblock {\em Comm. Math. Phys.}, 74(2):189--197, 1980.

\bibitem[PS92]{PreSla92}
Thomas Prellberg and Joseph Slawny.
\newblock Maps of intervals with indifferent fixed points: thermodynamic
  formalism and phase transitions.
\newblock {\em J. Statist. Phys.}, 66(1-2):503--514, 1992.

\bibitem[PS05]{PesSen05}
Yakov Pesin and Samuel Senti.
\newblock Thermodynamical formalism associated with inducing schemes for
  one-dimensional maps.
\newblock {\em Moscow Mathematical Journal}, 5(3):669--678, 2005.

\bibitem[PS08]{PesSen08}
Yakov Pesin and Samuel Senti.
\newblock Equilibrium measures for maps with inducing schemes.
\newblock {\em J. Mod. Dyn.}, 2(3):397--430, 2008.

\bibitem[PSS17]{PesSenSha17}
Yakov Pesin, Senti Samuel, and Farruh Shahidi.
\newblock Area preserving surface diffeomorphisms with polynomial decay rate
  are ubiquitous.
\newblock {\em Preprint}, 2017.

\bibitem[PSZ16]{PesSenZha16}
Ya.~B. Pesin, S.~Senti, and K.~Zhang.
\newblock Thermodynamics of {T}owers of {H}yperbolic {T}ype.
\newblock {\em Trans Amer. Math. Soc.}, 2016.

\bibitem[PW99]{PolW99}
Mark Pollicott and Howard Weiss.
\newblock Multifractal analysis of {L}yapunov exponent for continued fraction
  and {M}anneville-{P}omeau transformations and applications to {D}iophantine
  approximation.
\newblock {\em Comm. Math. Phys.}, 207(1):145--171, 1999.

\bibitem[PY01]{PolYur01}
M.~Pollicott and M.~Yuri.
\newblock Statistical properties of maps with indifferent periodic points.
\newblock {\em Comm. Math. Phys.}, 217(3):503--520, 2001.

\bibitem[PZ06]{PesZha06}
Yakov Pesin and Ke~Zhang.
\newblock Phase transitions for uniformly expanding maps.
\newblock {\em J. Stat. Phys.}, 122(6):1095--1110, 2006.

\bibitem[Sar01]{Sar01a}
Omri~M. Sarig.
\newblock Phase transitions for countable {M}arkov shifts.
\newblock {\em Comm. Math. Phys.}, 217(3):555--577, 2001.

\bibitem[Sar02]{Sar02}
Omri Sarig.
\newblock Subexponential decay of correlations.
\newblock {\em Invent. Math.}, 150(3):629--653, 2002.

\bibitem[ST13]{SenTak13}
Samuel Senti and Hiroki Takahasi.
\newblock Equilibrium measures for the {H}{\'{e}}non map at the first
  bifurcation.
\newblock {\em Nonlinearity}, 26(6):1719--1741, 2013.

\bibitem[ST16]{SenTak16}
Samuel Senti and Hiroki Takahasi.
\newblock Equilibrium measures for the {H}{\'{e}}non map at the first
  bifurcation: uniqueness and geometric/statistical properties.
\newblock {\em Ergodic Theory and Dynamical Systems}, 36:215--255, 2 2016.

\bibitem[SZ17]{ShaZel17}
Farruh Shahidi and Agnieszka Zelerowicz.
\newblock Ergodic properties of equilibrium measures for young diffeomorphisms.
\newblock {\em Preprint}, 2017.

\bibitem[You98]{You98}
Lai-Sang Young.
\newblock Statistical properties of dynamical systems with some hyperbolicity.
\newblock {\em Ann. of Math. (2)}, 147(3):585--650, 1998.

\bibitem[You99]{You99}
Lai-Sang Young.
\newblock Recurrence times and rates of mixing.
\newblock {\em Israel J. Math.}, 110:153--188, 1999.

\end{thebibliography}
\end{document}